\let\olddiamond\diamond
\let\oldsquare\square 
\renewcommand{\square}{\oldsquare}
\renewcommand{\diamond}{\olddiamond}
\numberwithin{equation}{section}
\numberwithin{figure}{section}
\newtheorem{theorem}{Theorem}[section]
\newtheorem{assumption}[theorem]{Assumption}
\newtheorem{corollary}[theorem]{Corollary}
\newtheorem{proposition}[theorem]{Proposition}
\newtheorem{lemma}[theorem]{Lemma}
\theoremstyle{definition}
\newcommand*{\N}{\ensuremath{\mathbb{N}}}
\newcommand*{\Z}{\ensuremath{\mathbb{Z}}}
\newcommand*{\R}{\ensuremath{\mathbb{R}}}
\newcommand*{\Zd}{\ensuremath{\mathbb{Z}^d}}
\newcommand*{\Rd}{\ensuremath{\mathbb{R}^d}}
\renewcommand*{\tilde}{\widetilde}
\renewcommand{\P}{\ensuremath{\mathbb{P}}}
\renewcommand{\O}{\ensuremath{\mathcal{O}}}
\newcommand{\X}{\ensuremath{\mathcal{X}}}
\newcommand{\qand}{\quad \mbox{and} \quad }
\newcommand{\f}{\mathbf{f}}
\newcommand{\h}{\mathbf{h}}
\newcommand{\s}{\mathbf{s}}
\renewcommand{\S}{\mathcal{S}}
\DeclareMathSymbol{\shortminus}{\mathbin}{AMSa}{"39}
\newcommand{\nf}{\nicefrac}
\DeclareSymbolFont{boldoperators}{OT1}{cmr}{bx}{n}
\newcommand\thickbar[1]{\accentset{\rule{.45em}{.6pt}}{#1}}
\renewcommand{\bar}{\thickbar}
\renewcommand{\a}{\mathbf{a}}
\renewcommand{\k}{\mathbf{k}}
\newcommand{\ahom}{\bar{\a}}
\newcommand{\shom}{\bar{\mathbf{s}}}
\newcommand{\khom}{\bar{\mathbf{k}}}
\renewcommand{\subset}{\subseteq}
\newcommand{\bfA}{\mathbf{A}}
\newcommand{\bfAhom}{\overline{\mathbf{A}}}
\newcommand{\negphantom}{\v@true\h@true\negph@nt} 
\newcommand{\neghphantom}{\v@false\h@true\negph@nt} 
\newcommand{\negph@nt}{\ifmmode\expandafter\mathpalette 
	\expandafter\mathnegph@nt\else\expandafter\makenegph@nt\fi} 
\newcommand{\makenegph@nt}[1]{%
	\setbox\z@\hbox{\color@begingroup#1\color@endgroup}\finnegph@nt} 
\newcommand{\finnegph@nt}{%
	\setbox\tw@\null 
	\ifv@ \ht\tw@\ht\z@\dp\tw@\dp\z@\fi \ifh@\wd\tw@-\wd\z@\fi\box\tw@} 
\newcommand{\mathnegph@nt}[2]{%
	\setbox\z@\hbox{$\m@th #1{#2}$}\finnegph@nt} 
\newcommand{\Hminus}{\hat{\phantom{H}}\negphantom{H}H^{-s}}
\newcommand{\Hminusul}{\hat{\phantom{H}}\negphantom{H}\underline{H}^{-s}}
\def\Xint#1{\mathchoice
	{\XXint\displaystyle\textstyle{#1}}%
	{\XXint\textstyle\scriptstyle{#1}}%
	{\XXint\scriptstyle\scriptscriptstyle{#1}}%
	{\XXint\scriptscriptstyle\scriptscriptstyle{#1}}%
	\!\int}
\def\XXint#1#2#3{{\setbox0=\hbox{$#1{#2#3}{\int}$}
		\vcenter{\hbox{$#2#3$}}\kern-.5\wd0}}
\def\fint{\Xint-}
\newcommand{\avsum}{\mathop{\mathpalette\avsuminner\relax}\displaylimits}
\newcommand\avsuminner[2]{%
	{\sbox0{$\m@th#1\sum$}%
		\vphantom{\usebox0}%
		\ooalign{%
			\hidewidth
			\smash{\,\rule[.23em]{8.8pt}{1.1pt} \relax}%
			\hidewidth\cr
			$\m@th#1\sum$\cr
		}%
	}%
}
\newcommand\avsuminnerr[2]{%
	{\sbox0{$\m@th#1\sum$}%
		\vphantom{\usebox0}%
		\ooalign{%
			\hidewidth
			\smash{\,\rule[.23em]{6pt}{0.7pt} \relax}%
			\hidewidth\cr
			$\m@th#1\sum$\cr
		}%
	}%
}
\let\originalleft\left
\let\originalright\right
\renewcommand{\left}{\mathopen{}\mathclose\bgroup\originalleft}
\renewcommand{\right}{\aftergroup\egroup\originalright}
\newcommand{\cu}{\square}
\definecolor{labelkey}{rgb}{0,0,1}
\renewcommand{\hat}{\widehat}
\newcommand{\addperiod}[1]{#1.}
\titleformat*{\subsection}{\bfseries}
\titleformat{\subsubsection}[runin]
{\normalfont\bfseries}
{\thesubsubsection.}
{0.5em}
{\addperiod}
\titleformat*{\subsubsection}{\normalfont\itshape}
\titleformat*{\paragraph}{\bfseries}
\titleformat*{\subparagraph}{\large\bfseries}
\date{\today}
\title{High-order Regularity Theory for High-contrast Elliptic Homogenization}
\author{Heikki Lohi\footnote{Department of Mathematics and Statistics, University of Helsinki (heikki.lohi@helsinki.fi)}}
\date{July 29, 2026}
\begin{document}

\maketitle

\begin{abstract}
The purpose of this article is to formulate and prove a global high-order regularity result within the high-contrast framework of elliptic homogenization. In order to achieve this, we also present a version of the high-contrast Caccioppoli inequality.
\end{abstract}

\tableofcontents 
\pagenumbering{arabic}

\section{Introduction}

We are interested in the following second-order linear elliptic partial differential equation that has the divergence-form of
\begin{align}\label{ellipeq}
    -\nabla\cdot\mathbf{a}(x)\nabla u=0
\end{align}
in $U\subset \R^d$ and $d\in \N$. Here, the random coefficient field $\mathbf{a}\colon \R^d\to\R^{d\times d}$ is defined (for not necessarily symmetric matrices) as a Lebesgue measurable $\Z^d$-stationary mapping with merely $L_{\textnormal{loc}}^1$ integrability. We understand the space $L_{\textnormal{loc}}^1(U)$ as the collection of functions $f\in L^1(V)$, where $V\subset U$ is bounded so that $\overline{V}\subset\textnormal{int}\: U$. Naturally, we similarly extend this definition for coefficient fields as well. In a general sense, the overall situation underlying this paper is ultimately the same as in \cite{AK.HC} (or as in \cite{ak} with the addition of the high-contrast setting). However, we will also provide all the necessary preliminaries here, but further details can always be found in \cite{AK.HC}. 

Classically, the theory developed for stochastic homogenization has been from the viewpoint of uniformly elliptic PDEs, whose study was initiated by De Giorgi and Spagnolo in the 1970s (see e.g. \cite{degiorgi} for an English translation). Over the last decade or so, significant progress has been made in this field, particularly on the quantitative side. For example, the exposition \cite{ak} by Armstrong and Kuusi provides an extensive overview of the current state of the field for the uniformly elliptic equation specified below. More precisely, the framework for a significant majority of previously published papers has classically assumed uniform (or moderate in the sense of this paper) ellipticity for~$\mathbf{a}(x)$. This refers to the well-known case, where the random coefficient field $\mathbf{a}(x)$ satisfies (almost surely with respect to the underlying probability measure $\mathbb{P}$) the \emph{uniform ellipticity conditions}
\begin{align}\label{unifellipconds}
    \lambda |e|^2\leq e\cdot \mathbf{a}(x)e\quad \textnormal{and} \quad \frac{|e|^2}{\Lambda}\leq e\cdot \mathbf{a}(x)^{-1}e
\end{align}
for all $e,x\in\R^d$. Above, $0<\lambda\leq\Lambda<\infty$ denote the so-called fixed \emph{ellipticity constants} that bound the eigenvalues of the matrix $\mathbf{a}(x)$. Another important role for these constants is to quantify the ''moderateness'' of the ellipticity contrast, which is determined by the \emph{ellipticity ratio} $\Pi:=\Lambda/\lambda$ given in (\ref{unifellipconds}).

More recently, the interest in \emph{high-contrast elliptic homogenization} (or degenerate elliptic homogenization) has been piqued, and \cite{AK.HC} established a fully rigorous mathematical framework with certain fundamental results for this setting. In high-contrast homogenization, the analysis requires much more complicated methods to handle the possibly violent behavior of the coefficients. In this respect, the so-called \emph{coarse-grained ellipticity} plays a fundamental role. 
To define the coarse-grained matrices for the coefficient field $\a(\cdot)$, we first set for every $x\in\R^d$ that 
\begin{align}\label{defbiga}
    \mathbf{A}(x):=\begin{bmatrix}
(\mathbf{s}+\mathbf{k}^t\mathbf{s}^{-1}\mathbf{k})(x) & -(\mathbf{k}^t\mathbf{s}^{-1})(x) \\
-(\mathbf{s}^{-1}\mathbf{k})(x) & \mathbf{s}^{-1}(x)
    \end{bmatrix}\in \R^{2d\times 2d},
\end{align}
where we denote the symmetric and antisymmetric parts by
\begin{align}\label{matrixdecomp}
    \mathbf{s}(x):=\frac{1}{2}(\mathbf{a}(x)+\mathbf{a}^t(x))\in\R_{\textnormal{sym}}^{d\times d}\quad \textnormal{and}\quad \mathbf{k}(x) :=\frac{1}{2}(\mathbf{a}(x)-\mathbf{a}^t(x))\in\R_{\textnormal{anti}}^{d\times d}
\end{align}
in which $\mathbf{a}^t$ is the transpose matrix of $\mathbf{a}$ as well as
\begin{align*}
    \R_{\textnormal{sym}}^{d\times d}:=\{A\in\R^{d\times d}\; |\; A=A^t\}\quad \textnormal{and}\quad \R_{\textnormal{anti}}^{d\times d}:=\{A\in\R^{d\times d}\; |\; A=-A^t\}.
\end{align*}
We collect the coarse-grained matrices into a single~$2d\times2d$ symmetric non-negative matrix~$\bfA(U)$. It has the following variational interpretation (see~\cite[Chapter 2]{AK.HC} for further details). Namely, we have, for every~$P \in \R^{2d}$, the formula that 
\begin{equation}
\label{e.J.P0.Dirichlet}
\frac12 P \cdot \bfA(U) P 
=
\inf\biggl\{ 
\fint_{U} 
\frac12 (X + P) \cdot \bfA(x) (X + P)
\; | \; 
X \in  L^{2}_{\a,\mathrm{pot},0}(U) \times  L^{2}_{\a,\mathrm{sol},0}(U)
\biggr\},
\end{equation}
where~$L^{2}_{\a,\mathrm{pot},0}(U)$ is defined as the closure of the set~$\{  \nabla \phi \; | \; \phi \in C_{\mathrm{c}}^\infty(U)  \}$ in terms of the norm~$\f \mapsto ( \int_{U} \f\cdot \s \f )^{\nicefrac12}$, and~$L^{2}_{\a,\mathrm{sol},0}(U)$ is the closure of~$
\{  \f  \; | \; \f \in C_{\mathrm{c}}^\infty(U;\Rd)\,, \; \nabla \cdot \f = 0  \bigr\}$ in terms of the norm~$\f \mapsto ( \int_{U} \f\cdot \s^{-1} \f )^{\nicefrac12}$. For further properties, we refer to the extensive discussion in~\cite[Chapter 2]{AK.HC}. Especially, we note by testing the variational formulation of~(\ref{e.J.P0.Dirichlet}) with constant vector fields that the quantity~$\bfA(U)$ is bounded under the assumption
\begin{equation*} 
\mathbf{s},\mathbf{s}^{-1},\mathbf{k}^t\mathbf{s}^{-1}\mathbf{k}\in L^1(U;\R^{d\times d}).
\end{equation*}

The axiomatic assumption we make for the entire article is that the coarse-grained matrices on suitable scales are controlled by their homogenized limits. 

\begin{assumption} 
\label{d.renormellipt}
We impose the following assumptions throughout the paper.
\begin{itemize}
\item Let $\theta \in (0,1]$ and $\gamma\in(0,1)$ be universally fixed parameters.
\item Let $\bfAhom \in \R_{\mathrm{sym}}^{2d\times 2d}$ be a homogenized positive definite matrix found in (\ref{defhoms}), whose entries are given by a positive definite symmetric matrix $\shom \in \R_{\mathrm{sym}}^{d\times d}$ and an antisymmetric matrix $\khom \in \R_{\mathrm{anti}}^{d\times d}$ as
\begin{align}\label{defbigahom}
    \bfAhom:=\begin{bmatrix}
\shom+\khom^t\shom^{-1}\khom & - \khom^t\shom^{-1} \\
-\shom^{-1}\khom   & \shom^{-1}
    \end{bmatrix}.
\end{align}  
\item Let $\mathbf{q}_0$ be a positive definite symmetric matrix defined in (\ref{defqzero.proof}) that constitutes an adapted universal geometry as follows. For every $n\in \Z$, we denote that $\mathbb{L}_0:=\mathbf{q}_0(\Z^d)$ and define the adapted cubes by
\begin{equation} 
\label{e.adapted}
\diamondsuit_n 
:= 
\mathbf{q}_0 \bigl((-\tfrac12 3^{n} , \tfrac12 3^n)^d\bigr)=\{ x\in\R^d\; |\; \mathbf{q}_0^{-1}x\in (-\tfrac12 3^{n} , \tfrac12 3^n)^d \}.
\end{equation}
\item
Let $\X_{\mathcal{H}}$ be a random variable associated with a $\Z^d$-stationary probability measure~$\P$ that is assumed to be weakly elliptic in the sense of (\ref{P2cond}) and satisfies the mixing condition of (\ref{P3cond}). We suppose that~$\P$ acts as the law of the Lebesgue measurable coefficient field $\mathbf{a}\colon \R^d\to\R^{d\times d}$ with respect to the $\sigma$-algebra of (\ref{defsigalg}) in the sample space
\begin{align*}
    \Omega:=\{\mathbf{a}\in L_{\textnormal{loc}}^1\; |\; \mathbf{a}(x)=\mathbf{s}(x)+\mathbf{k}(x);\;  \mathbf{s},\mathbf{s}^{-1},\mathbf{k}^t\mathbf{s}^{-1}\mathbf{k}\in L_{\textnormal{loc}}^1\}.
\end{align*}
Additionally, let~$\Psi_{\mathcal{H}}\colon\R_+ \to [1,\infty)$ be an increasing function satisfying for all $t \in (0,\infty)$ that
\begin{equation*}
\P\bigl[ \X_{\mathcal{H}} > t \bigr] \leq \frac1{\Psi_{\mathcal{H}}(t)} \quad 
\qand \quad
\lim_{t \to \infty}\Psi_{\mathcal{H}}(t) = \infty.
\end{equation*}
\item We assume for every~$n,m \in \Z$ satisfying~$3^m \geq \X_{\mathcal{H}}$ with~$n \leq m$ and~$z \in 3^n\mathbb{L}_0 \cap \diamondsuit_m$ that 
\begin{align}\label{renormellipt}
\mathbf{A}(z+\diamondsuit_n)\leq \biggl(1+3^{\gamma(m-n)}\Bigl(\frac{\mathcal{X}_{\mathcal{H}}}{3^m} \Bigr)^{\! \theta} \biggr)\mathbf{\overline{A}}.
\end{align}
\end{itemize}
\end{assumption}
Above, we can universally assume without any loss of generality that $\khom=0$. If that is not the case, we can always subtract $\khom$ from both $\a$ and $\ahom$ without altering the set of solutions. This property is very useful for us in our proofs, where we can set for the homogenized matrix $\ahom$ that $\ahom=\shom$.
We also utilized the Loewner ordering notation $A\leq B$ for two symmetric matrices of the same dimension, which means that the difference matrix $B-A$ has non-negative eigenvalues. Each of the objects shown above is discussed more thoroughly later on. Especially, the probability measure $\P$ satisfies the aforementioned conditions to be labeled as (P1)--(P3), which we will formulate more thoroughly in the next chapter. We will also demonstrate in Proposition \ref{p.Psimplyrenormellipt} that these assumptions (P1)--(P3) imply the condition of (\ref{renormellipt}). Given any~$\shom$, we set for its spectral norm that
\begin{equation}\label{deflambdabars}
\overline{\Lambda} := |\shom|,
\quad
\overline{\lambda} := |\shom^{-1}|^{-1}, \qand \Pi_{\overline{\mathbf{s}}}:=\nf{\overline{\Lambda}}{\overline{\lambda}}.
\end{equation}

In Assumption~\ref{d.renormellipt}, we called~$\bfAhom$ the~\emph{homogenized matrix}. Let us briefly discuss why this notion is justified. We start by decomposing~$\bfA(U)$ in the following block form of
\begin{equation}
\label{e.bigA.def}
\bfA(U)
:= 
\begin{bmatrix} 
( \s + \k^t\s_*^{-1}\k )(U) 
& -(\k^t\s_*^{-1})(U) 
\\ - ( \s_*^{-1}\k )(U) 
& \s_*^{-1}(U) 
\end{bmatrix}
\end{equation}
for the coarse-graining matrices~$\s(U)$,~$\s_*^{-1}(U)$, and $\k(U)$ defined by the identity in \eqref{defcgmatrices}. Consequently, 
the assumption~\eqref{renormellipt} and the definitions above imply for $\khom=0$ and for every~$n,m \in \Z$ with~$3^m \geq \mathcal{X_H}$ and~$n \leq m$ as well as~$z \in 3^n\mathbb{L}_0 \cap \diamondsuit_m$ that
\begin{equation*}
( \s+ \k^t \s_*^{-1} \k )(z+\diamondsuit_n)  
\leq
\biggl(1+3^{\gamma(m-n)}\Bigl(\frac{\mathcal{X}_{\mathcal{H}}}{3^m} \Bigr)^{\! \theta} \biggr) \shom,
\end{equation*}
and
\begin{equation*} 
\s_*^{-1}(z+\diamondsuit_n)  
\leq
\biggl(1+3^{\gamma(m-n)}\Bigl(\frac{\mathcal{X}_{\mathcal{H}}}{3^m} \Bigr)^{\! \theta} \biggr) \shom^{-1}.
\end{equation*}
We also note that $\s_*(z+\diamondsuit_n)\leq \s(z+\diamondsuit_n)$ always holds by~\cite[Lemma 5.2]{ak} that utilizes a certain duality argument. Therefore, we have almost surely with respect to $\mathbb{P}$ that
\begin{equation}\label{defhoms}
\lim_{n\to \infty} \s_*(\diamondsuit_n) 
=
\lim_{n\to \infty} \s(\diamondsuit_n)  
=
\shom 
\qand
\lim_{n\to \infty} \k(\diamondsuit_n)  = \khom
\end{equation}
while it follows that~$\lim_{n\to \infty} \bfA(\diamondsuit_n)  = \bfAhom$ almost surely. Note also that one can formulate the same objects with the standard Euclidean cubes as well, because it is always possible to find a bound for the coarse-grained matrices of adapted cubes in terms of the assumption (P2). For a proof of this fact, see~\cite[Lemma 2.13]{AK.HC}.

In this paper, we focus particularly on the quantitative aspects of homogenization theory (for a modern exposition in the periodic case, see e.g. \cite{shen}). Our aim is to generalize the main high-order regularity result of \cite[Theorem 6.12]{ak} and \cite[Theorem 3.8]{akm} to the \emph{high-contrast} framework. The arguments there are, in turn, based on the works of \cite{AKM1} and \cite{AKM2}. The study of regularity theory has a long and versatile history within the field of PDEs. For the purposes of stochastic homogenization theory, these considerations in the periodic setting originated in the late 1980s by Avellaneda and Lin in \cite{avellaneda} as well as in \cite{lin}. In addition, \cite{asmart} made significant contributions to this field, especially under stronger ergodicity assumptions, by introducing the coarse-grained matrices. Many quantitative results of regularity theory were also obtained in a great qualitative generality by \cite{gloria}.

The study of high-contrast homogenization is currently a very active line of research initiated by the seminal article of \cite{AK.HC}. Developing an extensive theory of high-contrast stochastic homogenization is important because it could have further
applications to certain problems in probability, analysis, and mathematical physics. A great example of this potential was seen in the recent proof of a superdiffusive central limit theorem in~\cite{ABK.SD} that relies heavily on the high-contrast theory introduced in \cite{AK.HC}.

The role of the \emph{Caccioppoli inequality} is paramount in the elliptic regularity theory. In~\cite[Proposition 2.5]{AK.HC}, a general high-contrast version of this result is presented. However, while it portrays the involved ellipticity quantities accurately in the general case, we will provide a simplified non-trivial version of it adjusted to our situation. Its proof is also independent of the argument they utilize. We can see its usefulness concretely in Chapter 4, where we will continuously utilize it in our presented proof for Theorem~\ref{thmhighreg}.

Let us then briefly present the main results of this article. There are essentially two primary results, which we aim to highlight here. The first result is the aforementioned version of the high-contrast Caccioppoli inequality.
\begin{proposition}\label{introCI}
Suppose that Assumption \ref{d.renormellipt} holds and that $\|\mathbf{s}^{1/2}\nabla u\|_{\underline{L}^2(\diamondsuit_{m-1})}<\infty$ with $u$ being a solution of (\ref{ellipeq}).
 Then, there exists a constant $C(d,\gamma)<\infty$ for every~$m\in \N$ with~$3^m \geq \X_{\mathcal{H}}$ such that
\begin{equation}\label{introresult1}
\|\mathbf{s}^{1/2}\nabla u \|_{\underline{L}^2(\diamondsuit_{m-1})}
\leq 
C\overline{\lambda}^{1/2}3^{-m}\|u\|_{\underline{L}^2(\diamondsuit_{m})}.
\end{equation}
\end{proposition}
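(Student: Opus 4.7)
The plan is to adapt the classical divergence-form Caccioppoli argument to the high-contrast setting. Two non-trivial ingredients are required: the antisymmetric part $\k$ is absorbed via an $\s$-weighted Cauchy-Schwarz (which produces the matrix $\s+\k^t\s^{-1}\k$ on the right-hand side), and then the resulting ``merely integrable'' weight is closed against the homogenized $\shom$ by invoking the coarse-grained ellipticity from Assumption~\ref{d.renormellipt}.

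First, I would pick a smooth cutoff $\eta \in C_c^\infty(\diamondsuit_m)$ with $\eta \equiv 1$ on $\diamondsuit_{m-1}$, $0 \le \eta \le 1$, and $\|\nabla \eta\|_{L^\infty} \le C(d)\,3^{-m}$. Testing the weak form of \eqref{ellipeq} against $\eta^2 u$ and using the antisymmetry of $\k$ (so that $\nabla u \cdot \a \nabla u = \nabla u \cdot \s \nabla u$ pointwise) yields
\[
\int \eta^2 \, \nabla u \cdot \s \nabla u \;=\; -2 \int \eta u \, \nabla \eta \cdot \a \nabla u.
\]
Splitting $\a = \s + \k$ on the right, applying Cauchy-Schwarz in the $\s$-weight to the symmetric part and the pairing $\s^{1/2}(\eta\nabla u) \cdot \s^{-1/2}(\k^t u \nabla \eta)$ to the antisymmetric part, and finally absorbing the $\int \eta^2 \nabla u \cdot \s \nabla u$ contribution to the left via Young's inequality produces the preliminary bound
\[
\int \eta^2 \, \nabla u \cdot \s \nabla u \;\le\; C \int u^2 \, \nabla \eta \cdot (\s + \k^t \s^{-1}\k) \nabla \eta,
\]
in which the matrix on the right is precisely the $(1,1)$-block of the pointwise $\bfA(x)$ from~\eqref{defbiga}.

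The crux, and the main obstacle, is then to dominate this right-hand side by $C\,\overline{\lambda}\,3^{-2m}\|u\|_{L^2(\diamondsuit_m)}^2$. Since the pointwise matrix $\s + \k^t \s^{-1}\k$ is merely $L^1_{\mathrm{loc}}$ and cannot in general be bounded by any multiple of $\shom$ pointwise, this step must proceed through the coarse-grained ellipticity. Using Assumption~\ref{d.renormellipt} at scale $n = m$ (where $3^m \ge \X_{\mathcal{H}}$ makes the prefactor in~\eqref{renormellipt} at most $2$) one obtains $\bfA(\diamondsuit_m) \le 2\,\bfAhom$, and in particular $(\s + \k^t \s_*^{-1}\k)(\diamondsuit_m) \le 2\,\shom$. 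Since $\nabla \eta$ is a deterministic, compactly supported gradient, the variational formula~\eqref{e.J.P0.Dirichlet} permits it to serve as an admissible competitor; combining this coarse-grained control with a Poincar\'e-type inequality to move $u^2$ outside the matrix integral gives the desired homogenized bound.

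Finally, restricting to $\diamondsuit_{m-1}$ (where $\eta \equiv 1$), taking square roots, and normalizing by the volumes via $|\diamondsuit_{m-1}|/|\diamondsuit_m| = 3^{-d}$ produces the stated inequality with constant $C = C(d,\gamma)$. The hard step is clearly the coarse-grained closing: passing from the matrix bound on $\bfA(\diamondsuit_m)$ to the required pointwise-integrated inequality for $u^2$ against the $(1,1)$-block is exactly where Assumption~\ref{d.renormellipt} does the essential work, and without which the classical argument cannot possibly yield a bound independent of the pointwise ellipticity contrast.
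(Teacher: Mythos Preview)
Your argument has a genuine gap at the ``crux'' step, and the direction of the variational inequality is precisely wrong for what you need. The formula~\eqref{e.J.P0.Dirichlet} is an \emph{infimum}: testing with $X=0$ yields $\bfA(U)\le \fint_U \bfA(x)\,dx$, so the coarse-grained matrix bounds the pointwise average from \emph{below}, not above. Consequently, knowing $\bfA(\diamondsuit_m)\le 2\bfAhom$ gives no control whatsoever on $\fint_{\diamondsuit_m}\bfA(x)\,dx$ from above, and certainly none on $\int u^2\,\nabla\eta\cdot(\s+\k^t\s^{-1}\k)\nabla\eta$. Your sentence about $\nabla\eta$ ``serving as an admissible competitor'' reverses the inequality; competitors furnish upper bounds on $P\cdot\bfA(U)P$, which is the wrong direction. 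Moreover, even if one had an upper bound on the average, the presence of the function $u^2$ (rather than a constant) prevents any direct use of such a matrix bound, and since $\s+\k^t\s^{-1}\k$ is merely $L^1_{\mathrm{loc}}$ there is no Poincar\'e-type mechanism to separate $u^2$ from the weight.

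The paper's route is entirely different and does not attempt to close the pointwise Caccioppoli computation directly. It imports from~\cite{AK.HC} an \emph{iterative} Caccioppoli inequality (Lemma~\ref{lem21}) of the form
\[
\|\s^{1/2}\nabla u\|_{\underline{L}^2(\diamondsuit_{m-1})}\le C\bigl(\overline\lambda^{1/2}3^{-m}\|u\|_{\underline{L}^2(\diamondsuit_m)}+3^{-(m-n)}\|\s^{1/2}\nabla u\|_{\underline{L}^2(\diamondsuit_m)}\bigr),
\]
whose proof in~\cite{AK.HC} already handles the high-contrast difficulty via the coarse-grained quantities. The paper then applies this on a family of intermediate regions $r\diamondsuit_M$ with $\tfrac12\le r<R\le 1$, optimizes the free scale $n$ to make the gradient-term coefficient at most $\tfrac12$, and finally removes that term by the standard hole-filling iteration (Lemma~\ref{lem12}). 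The Caccioppoli-type test-function computation you wrote down is essentially what underlies the iterative lemma inside~\cite{AK.HC}, but the passage from it to a non-iterative bound is the iteration, not a single application of Assumption~\ref{d.renormellipt} at one scale.
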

\noindent Here, $\underline{L}^2$ denotes the rescaled $L^2$ space, which is volume-normalized for every $p\in[1,\infty)$ as we define that $\|u\|_{\underline{L}^p(U)}:=|U|^{-1/p}\|u\|_{L^p(U)}$, where~$|U|$ denotes the Lebesgue measure of $U\subset\R^d$.

The second main result of this article is the high-contrast version of the high-order regularity theorem of \cite[Theorem 6.12]{ak}, which is a Liouville-type theorem about entire solutions having at most polynomial growth at infinity. Having a basic high-order regularity theorem such as this one is essential when trying to achieve optimal quantitative estimates in high contrast. We start by defining the space 
\begin{equation*}
    \mathcal{A}_k:=\left\{u\in H_{\s,\mathrm{loc}}^1(\Rd)\; | \;  -\nabla \cdot \a \nabla u = 0 \; \mbox{in } \Rd \qand \; \limsup_{n \to\infty} 3^{-n(k+1)}\|u\|_{\underline{L}^2(\diamondsuit_{n})}=0 \right\}
\end{equation*}
for every $k\in\N$ and, analogously, for the solutions of the homogenized equation, we define that
\begin{equation*}
\overline{\mathcal{A}}_k
:=
\left\{\bar u\in H_{\mathrm{loc}}^{1}(\Rd)\; | \;  -\nabla \cdot \shom \nabla \bar u = 0 \; \mbox{in } \Rd \qand \limsup_{n \to\infty} 3^{-n(k+1)}\|\bar u\|_{\underline{L}^2(\diamondsuit_{n})} =0 \right\}.
\end{equation*}
Here, the functions in $\overline{\mathcal{A}}_k$ are, in fact, $\shom$-harmonic polynomials of degree $k$ or less. The local weighted Sobolev spaces utilized above are defined as follows. Namely, for all bounded Lipschitz domains~$U\subset\R^d$, we set that
\begin{equation*}
H_{\s}^1(U) := \bigl\{ u \in W^{1,1}(U) \; | \; \| u \|_{L^1(U)} + \| \s^{\nf12} \nabla u \|_{L^2(U)} < \infty   \bigr\},
\end{equation*}
and then, rather naturally,
\begin{align*}
H_{\s,\mathrm{loc}}^1(\Rd) = \{ u \in L_{\mathrm{loc}}^1(\Rd) \; | \; \mbox{$u \in H_{\s}^1(U)$ whenever~$U$ is a bounded Lipschitz domain}\}. 
\end{align*}
Lastly, let us recall the volume-normalized seminorm of the Sobolev space~$\Hminus(\diamondsuit_m)$, which is the dual space of $H^s(\diamondsuit_m)$. We will begin with the definitions of the $\underline{H}^s$ norm and seminorm for $s\in(0,1)$, which are classically defined by
\begin{align*}
    \|u\|_{\underline{H}^s(U)}:=\left(|U|^{-\nf{2s}d}\|u\|_{\underline{L}^2(U)}^2 + [u]_{\underline{H}^s(U)}^2\right)^{\nf12} \qand [u]_{\underline{H}^s(U)}:=\left(\fint_U \int_U \frac{|u(x)-u(y)|^2}{|x-y|^{d+2s}}\; dx\: dy \right)^{\nf12}.
\end{align*}
Then, we define the volume-normalized norm and seminorm of $\Hminus$ as follows. Note that these norms coincide due to $C^\infty(U)$ being dense in $H^s(U)$, but we will make this distinction purely for notational reasons so that
\begin{align*}
    \|u\|_{\Hminusul(U)}:=\sup\left(\fint_U uv\; |\; v\in H^s(U), \|v\|_{\underline{H}^s(U)}\leq 1 \right)
\end{align*}
and
\begin{align*}
    [u]_{\Hminusul(U)}:=\sup\left(\fint_U uv\; |\; v\in C^\infty(U), \|v\|_{\underline{H}^s(U)}\leq 1 \right).
\end{align*}

\begin{theorem}\label{thm13}
Let $k\in\N$ and~$s\in(\nf\gamma2,\nf12)$. Suppose that Assumption \ref{d.renormellipt} holds. There exist a constant $C(d,k,s,\gamma)<\infty$ and a positive matrix $\overline{\mathbf{A}}\in\R_\textnormal{sym}^{2d\times 2d}$ as in (\ref{defbigahom}) such that for every $u\in\mathcal{A}_k$ and $m \in \N$ with $3^m \geq \X_{\mathcal{H}}$, there exists some~$\overline{u}\in\overline{\mathcal{A}}_k$, for which we have the estimate that
\begin{equation}\label{introresult2}
\begin{aligned}
   3^{-m}\overline{\lambda}^{1/2}\|u-\overline{u} \|_{\underline{L}^2(\diamondsuit_m)} &+ 3^{-ms}\left[\overline{\mathbf{A}}^{1/2}\begin{bmatrix}
\nabla u-\nabla \overline{u} \\ \mathbf{a}\nabla u-\ahom\nabla\overline{u}
    \end{bmatrix} \right]_{\Hminusul(\diamondsuit_m)}\\
    &\leq C
\left(\frac{\X_{\mathcal{H}}}{3^m} \right)^{\! \nf{\theta}{2}} \min\left\{\|\s^{1/2} \nabla u \|_{\underline{L}^2(\diamondsuit_m)},\; \|\shom^{1/2} \nabla \overline{u} \|_{\underline{L}^2(\diamondsuit_m)}\right\}.
\end{aligned}
\end{equation}
Conversely, for every~$\overline{u}\in\overline{\mathcal{A}}_k$, there exists~$u\in\mathcal{A}_k$ such that (\ref{introresult2}) is valid for every~$m \in \N$ with $3^m \geq \X_{\mathcal{H}}$. 
\end{theorem}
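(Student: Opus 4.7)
The plan is a scale-by-scale approximation combined with induction on the polynomial growth degree $k$. The core ingredient would be a quantitative one-step homogenization estimate: given $u \in \mathcal{A}_k$ and $m \in \N$ with $3^m \geq \X_{\mathcal{H}}$, I would construct an $\shom$-harmonic function $v_m$ on $\diamondsuit_{m-1}$ such that
\[
\overline{\lambda}^{1/2}\|u - v_m\|_{\underline{L}^2(\diamondsuit_{m-1})} \leq C 3^{m} \left(\frac{\X_{\mathcal{H}}}{3^m}\right)^{\!\theta/2} \|\s^{1/2} \nabla u \|_{\underline{L}^2(\diamondsuit_m)}.
\]
The derivation of this single-scale estimate proceeds by using the variational characterization \eqref{e.J.P0.Dirichlet} of the coarse-grained matrix tested against two-scale test fields built from the first-order correctors associated with $\bfAhom$; the excess $\bfA(z+\diamondsuit_n) - \bar{\mathbf A}$ appearing in the error is controlled in magnitude exactly by the factor $3^{\gamma(m-n)}(\X_{\mathcal{H}}/3^m)^{\theta}$ from the standing assumption \eqref{renormellipt}. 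The high-contrast Caccioppoli inequality of Proposition \ref{introCI} is used systematically to convert $L^2$ norms of the gradient (weighted by $\s^{1/2}$) back to $L^2$ norms of $u$ on a slightly larger cube when needed.

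Once this one-step estimate is in hand, I would invoke the classical interior $C^{k+1,1}$ regularity for the constant-coefficient operator $-\nabla \cdot \shom \nabla$ to decompose $v_m$ into its $\shom$-harmonic Taylor polynomial of degree $\le k$, call it $p_m \in \bar{\mathcal{A}}_k$, plus a remainder of size $C \, 3^{-(k+1)} \| v_m \|_{\underline{L}^2(\diamondsuit_m)}$. The polynomial-growth condition $\limsup_n 3^{-n(k+1)}\|u\|_{\underline{L}^2(\diamondsuit_n)}=0$ together with the Caccioppoli inequality ensures that the resulting sequence $\{p_m\}$ is Cauchy in the finite-dimensional space $\bar{\mathcal{A}}_k$, so I can define $\bar u := \lim_m p_m$. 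Summing the per-scale errors of the form $3^{\gamma(m-n)}(\X_{\mathcal{H}}/3^m)^{\theta/2}$ geometrically is exactly where the constraint $s \in (\nf{\gamma}{2},\nf12)$ enters: $s > \gamma/2$ makes the series convergent, while $s < 1/2$ preserves compatibility with the $H^{-1}$-duality used for the flux term.

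The main obstacle I expect is the flux part of \eqref{introresult2}, i.e. the $\Hminusul$-norm of the pair $[\nabla u - \nabla \bar u,\ \a \nabla u - \shom \nabla \bar u]$ weighted by $\bar{\mathbf A}^{1/2}$. Since $\a$ itself is not pointwise bounded, this quantity is genuinely dual in nature and must be estimated by pairing against a potential/solenoidal test field $X \in L^2_{\a,\mathrm{pot},0} \times L^2_{\a,\mathrm{sol},0}$, localized by smooth cutoffs and corrected by the first-order correctors of $\a$; the bound then follows by applying \eqref{renormellipt} on each dyadic subcube $z+\diamondsuit_n \subset \diamondsuit_m$ and invoking the Caccioppoli inequality to recover the energy $\|\s^{1/2}\nabla u\|_{\underline{L}^2(\diamondsuit_m)}$ on the right. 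The converse direction is symmetric: starting from $\bar u \in \bar{\mathcal{A}}_k$, one solves the Dirichlet problem for $-\nabla \cdot \a \nabla$ on $\diamondsuit_m$ with boundary data $\bar u$, extracts a limit as $m \to \infty$ through a diagonal argument, and checks by the same one-step estimate that the limit belongs to $\mathcal{A}_k$ and realises \eqref{introresult2} with the minimum attained in the second slot.
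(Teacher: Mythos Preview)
Your broad architecture---induction on $k$, a one-step harmonic approximation driven by Assumption~\ref{d.renormellipt}, systematic use of the Caccioppoli inequality, and geometric summation exploiting $s>\gamma/2$---matches the paper's proof. The forward direction (constructing $\bar u$ from $u$) via a Cauchy sequence of Taylor polynomials $p_m$ is essentially the excess-decay argument the paper runs in Step~3, though the paper carries this out more carefully using spherical harmonics and a Haar-measure integration in the scale variable to control the drift of the minimizing polynomial $p_{k,r}$.

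The genuine gap is in your converse direction. You propose to solve Dirichlet problems for $-\nabla\cdot\a\nabla$ on $\diamondsuit_m$ with datum $\bar u$ and ``extract a limit through a diagonal argument.'' This does not work as stated: the differences $w_m:=u_{m+1}-u_m\in\mathcal A(\diamondsuit_m)$ are solutions of the heterogeneous equation, and while the one-step estimate bounds $\|w_m\|_{\underline L^2(\diamondsuit_m)}$, it gives no control on $w_m$ on a \emph{fixed} smaller cube $\diamondsuit_n$---a generic high-contrast solution can concentrate badly on small scales. The paper resolves this by carrying a third statement (iii)$_k$ through the induction: a large-scale intrinsic $C^{k,1}$ estimate asserting that every $u\in\mathcal A(B_R)$ can be approximated in energy on $B_r$ by some $\phi\in\mathcal A_k$ with error $C(r/R)^{k}\|\s^{1/2}\nabla u\|_{\underline L^2(B_R)}$. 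Applying (iii)$_{k-1}$ to each $w_m$ produces correctors $\phi_m\in\mathcal A_{k-1}$ so that the partial sums $v_M=u_M-\sum_{m<M}\phi_m$ converge in $H^1_{\s,\mathrm{loc}}$ to a genuine element of $\mathcal A_k$. Your sketch omits this third leg of the induction entirely; without it the compactness argument yields neither uniqueness of the limit nor the polynomial growth bound needed to place it in $\mathcal A_k$, and the quantitative estimate~\eqref{introresult2} at \emph{every} admissible scale $m$ cannot be recovered.
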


Let us then explain the conventions utilized within this article regarding the usage of constants. Essentially, these conventions are similar to \cite{AK.HC}, that is, none of the constants within the high-contrast context can directly depend on the ellipticity constants or their ratios. Typically, the capital letter $C$ is reserved for constants greater than one, whereas the lowercase $c$ refers to (positive) constants smaller than one. We will also be utilizing the following abbreviation convention to indicate the dependencies of a certain constant. Namely, if we have a constant $C<\infty$ dependent on, for example, the dimension $d\in\N$ and some exponent $\xi\in(0,1]$, then we will indicate this simply by writing $C(d,\xi)<\infty$. On the other hand, if these dependencies have not been explicitly written down, then it means that they should be clear from the context otherwise (e.g. they remain the same as before or transfer directly from the claim). The constants may also vary from line to line.

To conclude this first introductory chapter, we will provide the outline of this paper. In the next chapter, we will present important preliminaries, such as definitions, notation, and basic results, in high contrast. In Chapter 3, we will start working towards the high-order regularity theory in high contrast that was not studied in \cite{AK.HC}. This will be done by formalizing a version of the Caccioppoli inequality in our somewhat more specific setting that simplifies the version presented in \cite[Proposition 2.5]{AK.HC}. Developing this powerful tool allows us to prove a high-order regularity theorem while following the arguments of \cite{ak} and \cite{akm} applied to the high-contrast framework. This will comprise the last chapter of this paper. Finally, throughout the fourth chapter, we will be working with harmonic polynomials within the geometry specifically constructed by the matrix~$\mathbf{q}_0$ that is adapted from the Euclidean geometry, as we explain more thoroughly in the following chapter. The definitions and properties of these polynomials in this geometry are studied and listed in Appendix~A for the reader's convenience.

\section{Preliminaries}

\subsection*{Definitions, notations, and assumptions}

The purpose of this section is to establish a mathematically rigorous foundation for high-contrast ellipticity to be utilized throughout the entire paper. This includes numerous fundamental definitions and fixing some of the notation for later. We aim to maintain these definitions and notations as consistent as possible with respect to \cite{AK.HC}. Furthermore, during this section, we will also provide all the underlying assumptions for the probability measure $\mathbb{P}$, whose realizations the coefficient matrices $\mathbf{a}(x)$ are in the probability space introduced below. Some of the necessary preliminaries were already presented in the previous chapter, and thus we will mostly not repeat them here. However, we might specify certain aspects further within this chapter.

We will start by examining the set
\begin{align*}
    \R_+^{d\times d}:=\{A\in\R^{d\times d}\; |\; e\cdot Ae\geq 0 \; \textnormal{for every $e\in\R^d$} \}
\end{align*}
of (not necessarily symmetric) square matrices and define the set of coefficient fields precisely by
\begin{align}\label{omegadef}
\Omega:=\{\mathbf{a}\in L_{\textnormal{loc}}^1(\R^d;  \R_+^{d\times d})\; |\; \mathbf{a}=\mathbf{s}+\mathbf{k}\; \textnormal{so that}\; \mathbf{s},\mathbf{s}^{-1},\mathbf{k}^t\mathbf{s}^{-1}\mathbf{k}\in L_{\textnormal{loc}}^1(\R^d;\R^{d\times d})\}.
\end{align}
Above, we require, of course, that $\mathbf{a}(x)=\mathbf{s}(x)+\mathbf{k}(x)$ holds for almost every $x\in\R^d$. Note that the aforementioned conditions always imply the existence of a unique inverse matrix $\mathbf{a}^{-1}(x)\in\R_+^{d\times d}$, because the null space of $\mathbf{a}(x)$ is trivial. Of course, this means that $\mathbf{s}^{-1}$ is also well-defined in the definition of (\ref{omegadef}). We also recall from basic matrix algebra that, indeed, every square matrix~$\mathbf{a}(x)$ has the sum decomposition $\mathbf{a}(x)=\mathbf{s}(x)+\mathbf{k}(x)$ given by (\ref{matrixdecomp}).

For all Borel sets $U\subset \R^d$, we define the $\sigma$-algebra $\mathcal{F}(U)$ to be generated via the random variables
\begin{align}\label{defsigalg}
    \mathbf{a}\mapsto \int_{\R^d} e'\cdot \mathbf{a}(x)e\varphi(x)\; dx
\end{align}
for each $e,e'\in\R^d$ and each test function $\varphi\in C_c^\infty(U)$. We will utilize the abbreviation that $\mathcal{F}:=\mathcal{F}(\R^d)$ from now on. Consequently, we wish to equip $\mathcal{F}$ with a group action that involves all $\R^d$-translations. Namely, the group $G:=\{T_y\; |\; y\in\R^d\}$ of $\R^d$-translations in $\Omega$ (that is, the translation mappings $T_y\colon \Omega\to\Omega$, $T_y\mathbf{a}=\mathbf{a}(\cdot+y)$ as $y\in\R^d$ is fixed) can be extended to the $\sigma$-algebra~$\mathcal{F}$ in a natural way by defining that $T_yF:=\{T_y\mathbf{a}\; |\; \mathbf{a}\in F\}$ as $F\in \mathcal{F}$.

Note that we could have equivalently defined $\Omega$ based on the definition of $\mathbf{A}(\cdot)$ instead of~$\mathbf{a}(\cdot)$ that was utilized in (\ref{omegadef}). Consequently, we may treat $\mathbf{a}(\cdot)$ and $\mathbf{A}(\cdot)$ interchangeably depending on the task at hand.

It is necessary for us to identify the function spaces and their norms in which we will perform our upcoming analysis. Again, for further details and remarks, the interested reader should see~\cite{AK.HC}. The weighted Sobolev space $H_\mathbf{s}^1(U)$ for $U\subset\R^d$ plays a prominent role in all of our analysis as the underlying function space, which we have already seen in the introductory chapter. Namely, here we define a bit more precisely that $H_\mathbf{s}^1(U)$ is the completion of the space $C^\infty(U)$, where the norm is given (up to an additive constant) by
\begin{align}\label{weightednorm}
\|u \|_{H_\mathbf{s}^1(U)}:=\left(\|u \|_{L^1(U)}^2 + \int_U \nabla u\cdot \mathbf{s}\nabla u \right)^{\nf{1}{2}}.
\end{align}

Naturally, $H_\mathbf{s}^1(U)$ has a compactly supported counterpart $H_{\mathbf{s},c}^1(U)$, which is defined as the closure of the test function space $C_c^\infty(U)$ equipped with the norm in (\ref{weightednorm}). The space~$H_{\mathbf{s}}^1(U)$ has the following important subspace
\begin{align*}
    \mathcal{A}(U):=\{u\in H_\mathbf{s}^1(U)\; |\; -\nabla\cdot \mathbf{a}\nabla u=0\; \textnormal{in $U$}\},
\end{align*}
whose condition is interpreted distributionally so that
\begin{align*}
    -\fint_U \nabla\varphi\cdot\a\nabla u=0
\end{align*}
for every $\varphi\in C_c^\infty(U)$. We will also denote by $\overline{\mathcal{A}}(U)$ the same space as above, but instead for the solutions $u\in H^{1}(U)$ of the homogenized equation $-\nabla\cdot\ahom\nabla u=0$. Let us also define the dual space of $H_{\mathbf{s},c}^1(U)$, which we will signify by
\begin{align*}
    H_\mathbf{s}^{-1}(U):=\{\nabla\cdot\mathbf{s}^{\nf 12}\mathbf{f}\; |\; \mathbf{f}\in L^2(U)^d \}.
\end{align*}
We will equip $H_\mathbf{s}^{-1}(U)$ with the classical dual norm as
\begin{align*}
    \|f \|_{H_\mathbf{s}^{-1}(U)}:=\sup\left\{(u,f)\; |\; u\in H_{\mathbf{s},c}^1(U),\, \|u\|_{H_\mathbf{s}^{1}(U)}\leq 1 \right\}.
\end{align*}
Lastly, we define the space $H_\mathbf{a}^1(U)$ as the closure of $C^\infty(U)$ with respect to the norm
\begin{align*}
    \|u\|_{H_\mathbf{a}^1(U)}:=\left(\|u\|_{H_\mathbf{s}^1(U)}^2+\|\nabla\cdot\mathbf{k}\nabla u\|_{H_\mathbf{s}^{-1}(U)}^2 \right)^{\nf{1}{2}}.
\end{align*}

Also, we recall from \cite{ak} or \cite{akm} that each bounded Lipschitz domain $U\subset\R^d$ has the \emph{coarse-grained matrix} of $\mathbf{A}(U)$ and its invertible dual of $\mathbf{A}_*(U)$ even in the high-contrast setting. These matrices were introduced in \cite{asmart} for the first time and subsequently further developed in \cite{ak}.
The exact definitions of $\mathbf{s}(U)$, $\mathbf{s}_*(U)$, and $\mathbf{k}(U)$ utilize the variational quantities of $J(U,p,q)$ and $J^*(U,p,q)$ defined for every
$p,q\in\R^d$ by the integral averages
\begin{align}\label{defJUpq}
    J(U,p,q):=\sup_{v\in\mathcal{A}(U)} \fint_{U} \left(-\frac{1}{2}\nabla v\cdot \mathbf{s}\nabla v - p\cdot \mathbf{a}\nabla v + q\cdot \nabla v \right)
\end{align}
as well as
\begin{align}\label{defJStarUpq}
    J^*(U,p,q):=\sup_{v\in\mathcal{A}^*(U)} \fint_{U} \left(-\frac{1}{2}\nabla v\cdot \mathbf{s}\nabla v - p\cdot \mathbf{a}^t\nabla v + q\cdot \nabla v \right)
\end{align}
with $\mathcal{A}^*(U):=\{ v\in H_\mathbf{a}^1(U)\; |\; -\nabla\cdot\mathbf{a}^t\nabla v=0 \}$.
We then define $\mathbf{s}(U), \mathbf{s}_*(U)\in\R_\textnormal{sym}^{d\times d}$ and $\mathbf{k}(U)\in\R^{d\times d}$ variationally so that
\begin{align}\label{defcgmatrices}
    J(U,p,q)=\frac{1}{2}p\cdot \mathbf{s}(U)p+\frac{1}{2}(q+\mathbf{k}(U)p)\cdot\mathbf{s}_*^{-1}(U)(q+\mathbf{k}(U)p)-p\cdot q
\end{align}
or, equivalently,
\begin{align}
    J^*(U,p,q)=\frac{1}{2}p\cdot \mathbf{s}(U)p+\frac{1}{2}(q-\mathbf{k}(U)p)\cdot\mathbf{s}_*^{-1}(U)(q-\mathbf{k}(U)p)-p\cdot q.
\end{align}
These objects have a very general and rich structure with many useful characteristics. For example, they satisfy multiple convenient variational properties and ordering properties. Most importantly, they satisfy the \emph{subadditivity} property of \eqref{subaddproperty}, making them the natural choice for studying high-contrast homogenization. We refer to \cite[Chapter 5]{ak} and \cite[Chapter 2]{AK.HC} for more details and motivation.

Our next goal is to equip a certain probability measure $\mathbb{P}$ in the space $(\Omega,\mathcal{F})$, which in turn produces the underlying probability space $(\Omega,\mathcal{F},\mathbb{P})$ for us. The conditions that we require from~$\mathbb{P}$ are paramount for the high-contrast setting, and thus they dictate our inference quite a lot. On the one hand, we must have some ellipticity and ergodicity, but not too much of them either, which would restrict our analysis a bit too heavily. The following three assumptions for $\mathbb{P}$ that we present below are identical (modulo small notational changes) to the ones imposed in \cite{AK.HC}, where we denote their equivalent condition of (P2\textdagger) by simply (P2). Let us first list these assumptions and then discuss them further (while again, for more detailed explanations, motivation, and remarks, we refer to~\cite{AK.HC}). However, we will start by introducing a few more pieces of notation before that.

We denote the \emph{triadic subcubes} by $\square_m$ for every $m\in\Z$ and define them as
\begin{align*}
    \square_m:=\left(-\frac{1}{2}3^m,\frac{1}{2}3^m \right)^d\subset\R^d.
\end{align*}
Another notation that we have not yet explained is the \emph{Malliavin derivative} (at least in some sense), which we denote by $|D_{z+\square_n}X_z|$ in (P3) below. Namely, as long as a random variable $X$ is $\mathcal{F}$-measurable in $\Omega$, we define for each $\mathbf{A}\in\Omega$ that
\begin{equation*}
\begin{aligned}
    |D_U X|(\mathbf{A})
    :=\limsup_{t\to 0} \frac{\sup\{X(\mathbf{A}_1)-X(\mathbf{A}_2)\; |\; \mathbf{A}_1,\mathbf{A}_2\in\Omega, |\mathbf{A}^{-\nf{1}{2}}\mathbf{A}_i\mathbf{A}^{-\nf{1}{2}}-I_{2d}|\leq t1_U, \forall i\in\{1,2\} \}}{2t}.
\end{aligned}
\end{equation*}
Here, $I_{2d}\in\R^{2d\times 2d}$ signifies the identity matrix and $1_U$ signifies the indicator function in $U$. The heuristic interpretation for the Malliavin derivative here is that it measures how strongly $X$ depends on the values of $\a|_U$. Lastly, for a finite set $S$ that consists of some $k\in\N$ elements $s_1,\ldots, s_k$, we will denote its cardinality by $|S|=k$ and, furthermore, we utilize the following notation for their mean that
\begin{align*}
    \avsum_{i=1}^k s_i:=\frac{1}{k}\sum_{i=1}^k s_i.
\end{align*}
\begin{itemize}
\item[(P1)] We assume $\mathbb{P}$ to be $\Z^d$-stationary (i.e. statistically homogeneous), which means that $\mathbb{P}\circ T_z=\mathbb{P}$ for all lattice points $z\in\Z^d$.
\item[(P2)] We assume that the equation in (\ref{ellipeq}) is (weakly) elliptic by having deterministic bounds for the coarse-grained coefficient field on sufficiently large scales. More precisely, we suppose the existence of a random variable $\mathcal{S}\colon \Omega\to [0,\infty)$, which acts as the minimal scale for ellipticity. Namely, the probability of the event $\{\mathcal{S}>s\}$ has the following upper bound for every $s\in(0,\infty)$ that
\begin{align*}
    \mathbb{P}[\mathcal{S}>s]\leq \frac{1}{\Psi_\mathcal{S}(s)}
\end{align*} 
for some increasing function $\Psi_\mathcal{S}\colon \R_+\to [1,\infty)$ with a constant $K_{\Psi_\mathcal{S}}\in(1,\infty)$ such that they together fulfill the requirement that
\begin{align*}
    t\Psi_\mathcal{S}(t)\leq \Psi_\mathcal{S}(tK_{\Psi_\mathcal{S}})
\end{align*}
for every $t\in[1,\infty)$.
Lastly, we assume that there exist a matrix $\mathbf{E}_0\in\R_{\textnormal{sym}}^{2d\times 2d}$ and an exponent $\gamma'\in[0,1)$ as follows. We suppose that the scale $\mathcal{S}$ is chosen so that for every $m\in\Z$, if it holds that $3^m\geq \mathcal{S}$, then for all $n\in (-\infty,m]\cap\Z$ and $z\in 3^n\Z^d\cap \square_m$, we have that
\begin{align}\label{P2cond}
    \mathbf{A}(z+\square_n)\leq 3^{\gamma'(m-n)}\mathbf{E}_0.
\end{align}

\item[(P3)] We assume that a suitable mixing condition, called \emph{concentration for sums} (henceforth abbreviated as CFS), holds, which quantifies the ergodicity involved. More precisely, we suppose the existence of a parameter $\beta\in[0,1)$, an exponent $\nu\in(0,\nf{d}{2}]$, and an increasing function $\Psi\colon \R_+\to [1,\infty)$ with constant $K_\Psi\in [3,\infty)$ such that they together fulfill the requirement that
\begin{align*}
    t\Psi(t)\leq \Psi(tK_\Psi)
\end{align*}
for all $t\in [1,\infty)$. Furthermore, we assume that for every $m,n\in\N$ as $\beta m<n<m$ and for every $z\in 3^n\Z^d\cap\square_m$ that the family $\{X_z\}$ of random variables satisfies the following four conditions
\begin{align*}
    \begin{cases}
\mathbb{E}[X_z]=0, \\
|X_z|\leq 1, \\
|D_{z+\square_n}X_z|\leq 1, \\
\textnormal{$X_z$ is $\mathcal{F}(z+\square_n)$-measurable}.
    \end{cases}
\end{align*}
Then, we have the bound for every $t\in[1,\infty)$ that
\begin{align}\label{P3cond}
    \mathbb{P}\left[\left|\avsum_{z\in 3^n\Z^d\cap\square_m} X_z \right|\geq t3^{-\nu(m-n)} \right]\leq \frac{1}{\Psi(t)}.
\end{align}
\end{itemize}

Let us then point out a few remarks regarding the stated assumptions~(P1)--(P3) above. Condition (P1) is a canonical assumption at this point in the contemporary homogenization literature that does not necessarily involve periodic coefficient fields. Traditionally, homogenization theory has studied periodic fields that satisfy $\a(x+z)=\a(x)$ for every $z\in\Z^d$. This theory has been generalized for $\Z^d$-stationary fields over the last few years (see, for example, \cite{ak}), where the assumption of stationarity extends the periodic setting by requiring invariance of the law under translations rather than pointwise periodicity. This, in turn, affects the averaging process within homogenization. The condition of (P1) has been the starting point for the development of this periodicity-free theory (of course, one needs other assumptions as well, such as for governing the ergodicity).

Assumption (P2) provides a weaker and more general sense of ellipticity for the equation in~(\ref{ellipeq}) than the classical uniform ellipticity conditions of (\ref{unifellipconds}) due to the smoothening effect of~$\gamma'$ within smaller scales. Namely, the exponent $\gamma'$ encodes a renormalization-type smoothing, since the ellipticity may deteriorate at small scales, but the coarse-graining restores uniform control at larger scales. Actually, as shown in \cite{AK.HC}, these uniform ellipticity conditions are a special case given by~(P2) after setting $\mathcal{S}=0$ and $\gamma'=0$. However, most importantly, condition~(P2) is \emph{renormalizable}, which is absolutely vital for high-contrast homogenization, since we need to constantly change the geometry under this renormalization to make sense of our objects. Renormalization means in this context that the pushforward of the probability measure $\P$ adheres to the same ellipticity bound under the triadic dilation map $\a\mapsto \a(3^n\cdot)$ for each $n\in\N$. This fact is properly justified in \cite[Proposition 2.11]{AK.HC}. 

The CFS mixing condition in (P3) was originally introduced in \cite{ak} to obtain a convenient and reliable way to quantify the involved ergodicity, as well as to prove optimal quantitative homogenization estimates. Essentially, it is a linear concentration inequality for averaged sums of random variables that depend locally on the coefficient field $\mathbf{a}$. On the one hand, this mixing condition is general enough to incorporate all the random variables that are typically encountered in elliptic homogenization problems. But still, on the other hand, it has to be strong enough to allow optimal quantitative estimates.

Altogether, the assumptions (P1)--(P3) provide us with a very general but still flexible setting for high-contrast homogenization to work with. For example, as rigorously shown in \cite{AK.HC}, many Poisson inclusions, Gaussian stream matrices, and log-normal fields satisfy these conditions.

Now that we have formulated the axiomatic assumptions (P1)--(P3), we should show that they actually imply the condition~\eqref{renormellipt} that is essential in this paper. As mentioned before, the renormalization scheme points our interest towards~\cite[Corollary 4.3]{AK.HC}. For completeness, we will present these (somewhat modified) details below and define some of the objects of Chapter 1 more thoroughly.

However, before formulating the following proposition, let us briefly explain the utilized notation of~$\mathcal{O}_\Psi(C)$ in (\ref{prop21bigoh}). Let $\Psi\colon \R_+\to[1,\infty)$ be an increasing function with the assumption that
\begin{align*}
    \lim_{t\to\infty} \frac{\Psi(t)}{t}=\infty.
\end{align*}
For any non-negative constant $C\geq0$ and random variable $X$, we formalize the following notation $X\leq \mathcal{O}_\Psi(C)$ to mean that
\begin{align*}
    \mathbb{P}[X>tC]\leq \frac{1}{\Psi(t)}
\end{align*}
for each $t\in[1,\infty)$. Furthermore, the notation $X=\mathcal{O}_\Psi(C)$ refers to the case that $|X|\leq \mathcal{O}_\Psi(C)$. For further information on these related quantities of weak Orlicz quasi-norms and their concentration inequalities, see \cite[Appendix C]{AK.HC}.

Lastly, we will introduce the \emph{intrinsic ellipticity ratio} $\Theta$ and the \emph{aspect ratio} $\Pi$ from \cite{AK.HC}. Let us start by decomposing the matrix $\mathbf{E}_0$ from (P2) to the following block form by writing that
\begin{align*}
    \mathbf{E}_0:=\begin{bmatrix}
        \s_0+\k_0^t\s_{*,0}^{-1}\k_0 & -\k_0^t\s_{*,0}^{-1} \\ -\s_{*,0}^{-1}\k_0 & \s_{*,0}^{-1}
    \end{bmatrix}.
\end{align*}
Then, we will define the intrinsic ellipticity ratio $\Theta$ by
\begin{align*}
    \Theta:=\min_{\h\in\R_\textnormal{anti}^{d\times d}}\left| \s_{*,0}^{-\nf 12}(\s_0+(\k_0-\h)^t\s_{*,0}^{-1}(\k_0-\h))\s_{*,0}^{-\nf 12} \right|.
\end{align*}
The aspect ratio $\Pi$ is, in turn, given by $\Pi:=\nf {\Lambda_0}{\lambda_0}$, where we have for the ellipticity constants $0< \lambda_0\leq \Lambda_0< \infty$ that
\begin{align*}
    \lambda_0:=|\s_{*,0}^{-1}|^{-1} \qand \Lambda_0:=\min_{\h\in\R_\textnormal{anti}^{d\times d}}\left| \s_0+(\k_0-\h)^t\s_{*,0}^{-1}(\k_0-\h) \right|.
\end{align*}

\begin{proposition} 
\label{p.Psimplyrenormellipt}
Assume that (P1)--(P3) are valid. First of all, there exists a homogenized matrix~$\bfAhom$ as in~\eqref{defbigahom} and a positive symmetric matrix~$\mathbf{q}_0$ that defines an adapted universal geometry as in~\eqref{e.adapted}. Moreover, for every~$\delta \in (0,1)$ and~$\gamma \in (\gamma',1)$, there exists a constant~$\Upsilon<\infty$ that depends on $d$, $\nu$, $\beta$, $\gamma$, $\gamma'$, $\delta$, $K_\Psi$, $K_{\Psi_\mathcal{S}}$, as well as on the ratios $\Pi$ and $\Theta$, and a non-negative random variable~$\mathcal{Y}_{\delta,\gamma}$ 
that satisfies
\begin{align}\label{prop21bigoh}
\mathcal{Y}_{\delta,\gamma}^{(\nu-\gamma')(1-\beta)} \leq \O_{\Psi}(\Upsilon)
\end{align}
for $\nf{d}{2}\geq\nu>\gamma'$ as follows. Namely, by defining~$\X_{\mathcal{H}} := \max\{ \mathcal{Y}_{\delta,\gamma}, \mathcal{S} \}$, we have 
for every~$n,m \in \Z$ satisfying~$3^m \geq \X_{\mathcal{H}}$ and~$n \leq m$, as well as~$z \in 3^n\mathbb{L}_0 \cap \diamondsuit_m$ that
\begin{align}\label{renormellipt.again}
\mathbf{A}(z+\diamondsuit_n)\leq \biggl(1+3^{\gamma(m-n)}\Bigl(\frac{\mathcal{X}_{\mathcal{H}}}{3^m} \Bigr)^{\! \theta} \biggr)\mathbf{\overline{A}}.
\end{align}
In particular, the conditions (P1)--(P3) imply Assumption~\ref{d.renormellipt}. 
\end{proposition}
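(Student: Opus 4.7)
The plan is to reduce the proposition to the renormalization argument already carried out in~\cite[Corollary 4.3]{AK.HC}, which yields essentially the same conclusion for Euclidean coarse-grained matrices. Accordingly, the proof splits into three steps: (i)~constructing~$\bfAhom$ and~$\mathbf{q}_0$ out of the averaged coarse-grained matrices~$\mathbb{E}[\bfA(\square_n)]$, (ii)~transferring the Euclidean renormalization bound to the adapted cubes~$\diamondsuit_n$, and (iii)~extracting the minimal scale~$\mathcal{Y}_{\delta,\gamma}$ together with its tail estimate.

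For step~(i), the starting point is the observation that the variational formula~(\ref{e.J.P0.Dirichlet}) combined with the subadditivity and monotonicity of~$U \mapsto \bfA(U)$ established in~\cite[Chapter~2]{AK.HC} implies that the averaged sequence~$\{\mathbb{E}[\bfA(\square_n)]\}_{n\in\mathbb{N}}$ is nonincreasing in the Loewner order. The deterministic control from~(P2) gives a uniform lower bound for the sequence, so it converges to a positive symmetric limit that we call~$\bfAhom$. Reading off its block decomposition~(\ref{e.bigA.def}) identifies~$\shom$ and~$\khom$, and the representation~(\ref{defbigahom}) then follows from the standard algebraic identities among the blocks. The adapted geometry is defined by setting~$\mathbf{q}_0 := \shom^{-\nf12}$ up to an absolute normalization, so that the adapted cubes~$\diamondsuit_n$ are precisely the images of triadic Euclidean cubes under the linear transformation sending~$\shom$ to the identity. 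Since this transformation acts on~$\bfA(U)$ by a congruence, the estimates valid on~$\square_n$ transfer to~$\diamondsuit_n$ with losses controlled only by~$\Pi_{\overline{\mathbf{s}}}$.

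For step~(ii), I would define~$\mathcal{Y}_{\delta,\gamma}$ to be the smallest triadic scale~$3^m$ at which~(\ref{renormellipt.again}) holds simultaneously for every~$n \leq m$ and every~$z \in 3^n\mathbb{L}_0 \cap \diamondsuit_m$. The iterative coarse-graining step is the renormalization argument of~\cite[Chapter~4]{AK.HC}: assuming the bound at scale~$n$, combining the deterministic~(P2) estimate with the CFS concentration~(P3) applied to the~$\mathcal{F}(z+\diamondsuit_n)$-measurable entries of~$\bfA(z+\diamondsuit_n)$ upgrades the bound to scale~$n+1$ with an additional multiplicative factor of the form~$1 + C 3^{-(\nu-\gamma')(n-n_0)(1-\beta)}$. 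Summing the resulting geometric series and iterating between the minimal scale and~$3^m$ closes the bootstrap whenever~$\gamma > \gamma'$, and produces precisely the prefactor~$1 + 3^{\gamma(m-n)}(\X_{\mathcal{H}}/3^m)^\theta$ in~(\ref{renormellipt.again}), with~$\theta \in (0,1]$ chosen to match the residual exponent. Setting~$\X_{\mathcal{H}} := \max\{ \mathcal{Y}_{\delta,\gamma}, \mathcal{S}\}$, the tail bound for~$\mathcal{S}$ furnished by~(P2) together with the CFS tail for~$\mathcal{Y}_{\delta,\gamma}$ combine to yield~(\ref{prop21bigoh}).

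The main obstacle is the careful bookkeeping of the two competing exponents in the renormalization: the deterministic loss~$3^{\gamma'(m-n)}$ from~(P2) must be strictly dominated by the statistical gain~$3^{-\nu(m-n)(1-\beta)}$ from~(P3), so that the residual exponent~$(\nu-\gamma')(1-\beta)$ governs the fluctuations uniformly at every scale. Achieving this uniformity requires a union bound over all intermediate scales~$n \leq m$, which is absorbed into the stretched tail function~$\Psi$ thanks to the doubling property enforced by~$K_\Psi$. This is exactly the point where the hypothesis~$\gamma > \gamma'$ and the factor~$(1-\beta)$ appearing in~(\ref{prop21bigoh}) become essential, and it is the step where we lean on the machinery already developed in~\cite[Corollary~4.3]{AK.HC} rather than redoing the full renormalization from scratch.
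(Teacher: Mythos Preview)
Your outline has the right high-level shape, but it misidentifies where the actual work lies and the transfer step you propose does not go through as stated.

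First, the paper's proof is much shorter than your sketch suggests: it invokes \cite[Corollary~4.3]{AK.HC} as a black box, which already delivers the existence of~$\bfAhom$, the random scale~$\tilde{\mathcal{Y}}_{\delta,\gamma}$ with its tail bound~\eqref{prop21bigoh}, and the full estimate~$\bfA(z+\square_n) \leq (1 + \tilde\delta\,3^{\gamma(m-n)}(\X_{\mathcal{H}}/3^m)^\theta)\bfAhom$ for \emph{Euclidean} cubes and~$z \in 3^n\Z^d \cap \square_m$. So your step~(i) and the renormalization bookkeeping you describe in step~(ii) are not redone in the paper at all; they are imported wholesale from~\cite{AK.HC}. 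The only new content in the paper's proof is the passage from~$\square_n$ to~$\diamondsuit_n$, which you dismiss in one sentence.

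That passage is where your proposal has a genuine gap. You claim that the linear map~$x \mapsto \mathbf{q}_0^{-1}x$ acts on~$\bfA(U)$ by a congruence, so that the Euclidean bound transfers to adapted cubes with a harmless~$\Pi_{\shom}$ loss. But a change of variables replaces~$\mathbf{a}(\cdot)$ by~$\mathbf{q}_0^t\mathbf{a}(\mathbf{q}_0\,\cdot\,)\mathbf{q}_0$, and this new field is~$\mathbf{q}_0^{-1}\Z^d$-stationary rather than~$\Z^d$-stationary; assumption~(P1), and hence Corollary~4.3, is therefore not directly available for it. The paper instead stays in the original coordinates and decomposes each adapted cube~$z+\diamondsuit_n$ into a Whitney-type family~$\{V_j(z)\}_{j\leq n}$ of disjoint unions of Euclidean cubes~$y+\square_j$, with volume control~$|V_j(z)| \leq C\Pi_{\shom}^{1/2}3^{j-n}|\diamondsuit_n|$. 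Subadditivity of~$U \mapsto \bfA(U)$ then bounds~$\bfA(z+\diamondsuit_n)$ by a weighted average of the~$\bfA(y+\square_j)$, to which the Euclidean estimate applies. Summing the geometric series in~$j$ produces an extra factor~$C\Pi_{\shom}^{1/2}/(1-\gamma)$; the paper absorbs it by choosing the free parameter~$\tilde\delta$ in Corollary~4.3 to be its reciprocal, which is why the statement allows an arbitrary~$\delta \in (0,1)$ to begin with. Your congruence shortcut misses this mechanism entirely. A secondary point: the paper's~$\mathbf{q}_0$ is a triadic-rational approximation of~$|\shom^{-1}|^{1/2}\shom^{1/2}$ (not~$\shom^{-1/2}$), chosen so that~$3^{k_0}\mathbb{L}_0 \subset \Z^d$ for some~$k_0(d)$; this lattice compatibility is what makes the Whitney decomposition and the Euclidean bound mesh.
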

\begin{proof}
Assume that (P1)--(P3) are valid. Let us first recall the context of~\cite[Theorem 4.1]{AK.HC} and~\cite[Corollary 4.3]{AK.HC}. These results state that there exist constants~$C(d)<\infty$ and~$c(d) \in (0,\nicefrac12]$ such that, if we define the parameters 
\begin{equation*}
\alpha:= \bigl( \min\{ \nu,1\} - \gamma'\bigr)(1-\beta)
\quad \mbox{and} \quad
\kappa := \min\{ c , \nicefrac \alpha3\},
\end{equation*}
then, for each~$\delta \in (0,1)$ and~$\gamma \in (\gamma',1)$, there exists a random variable~$\tilde{\mathcal{Y}}_{\delta,\gamma}$ satisfying
\begin{equation*}
\tilde{\mathcal{Y}}_{\delta,\gamma}^{(\nu-\gamma')(1-\beta)} \leq
\O_{\Psi} \Biggl(
3^{m_*(\nu-\gamma')(1-\beta)}
\exp\biggl( 
\frac{C }{\gamma-\gamma'}
\log\left(\max\{K_\Psi,\Theta,\delta^{-1} \} \right)
\biggr)
\Biggr).
\end{equation*}
Here, we denoted that
\begin{align*}
    m_*:=C\left(\log K_{\Psi_\mathcal{S}}+\frac{1}{\alpha^2}\log\left(\frac{\Pi K_\Psi}{\alpha} \right) \right)\log(1+\Theta),
\end{align*}
whereas the role of $\delta$ above is to act as the homogenization error tolerance parameter. Moreover, by setting 
\begin{align*}
    \theta := \frac14 \min\left\{ \kappa, \frac{(\nu-\gamma')(1-\beta)(\gamma-\gamma')}{2(d+(\nu-\gamma')(1-\beta))}  \right\},
\end{align*}
$\tilde{\delta}$ to be specified later,~$\mathcal{Y}_{\tilde{\delta},\gamma} := \tilde{\mathcal{Y}}_{\delta,\gamma}$, and~$\X_{\mathcal{H}} := \max\{\mathcal{Y}_{\tilde\delta,\gamma}, \S\}$, there exists~$\bfAhom$ as in~\eqref{defbigahom} so that we have for every~$m,n\in \Z$ with~$n\leq m$ and~$3^m \geq \X_{\mathcal{H}}$ as well as~$ z \in 3^n \Z^d \cap \cu_m$ that
\begin{align}
\label{e.final.P2.again}
\bfA(z+\cu_n) 
\leq 
\biggl(\!1 +\tilde\delta 3^{\gamma(m-n)} \Bigl(\frac{\X_{\mathcal{H}}}{3^m}\Bigr)^{\! \theta} \biggr) \bfAhom.
\end{align}
We define the matrix~$\mathbf{q}_0$ as
\begin{equation}
\label{defqzero.proof}
    (\mathbf{q}_0)_{ij}:=3^{-k_0}\left\lceil 3^{k_0} \left|\shom^{-1}\right|^{\nicefrac 12} (\shom^{\nicefrac 12})_{ij} \right\rceil
\end{equation}
for a constant~$k_0(d) \in \N$ chosen large enough so that $3^{k_0}\mathbb{L}_0\subset\Z^d$. This creates an adapted universal geometry, where we can view every subset $U\subset \R^d$ from the perspective of $\mathbf{q}_0$ as in (\ref{e.adapted}).

We will then check the estimate of~\eqref{renormellipt.again} utilizing a similar Whitney-type decomposition as in \cite[Lemma 2.13]{AK.HC}. 
Let~$l := \lceil \log_3 (C(d) \Pi_{\shom} ^{\nicefrac12}) \rceil$ with a large enough constant~$C<\infty$ for which~$z+\diamondsuit_n \subset \square_{m+l}$ and $\Pi_{\shom}$ defined in \eqref{deflambdabars}. 
Then~$z+\diamondsuit_n$ can be decomposed into a disjoint union (modulo a Lebesgue null set) by the family~$\{ V_j(z) \,|\, -\infty < j \leq n \}$ consisting of such sets that each~$V_j(z)$ is the disjoint union of the cubes~$y+\cu_j$ with~$y \in 3^j\Zd$ as well as 
\begin{equation}\label{prop21pfmeasbds}
|V_j(z)| \leq C\Pi_{\shom}^{\nicefrac12} 3^{j-n} |\diamondsuit_n| 
\quad \mbox{and} \quad 
\sum_{j = -\infty}^n \frac{|V_j(z)|}{|\diamondsuit_n|} = 1.
\end{equation}
We may construct a recursive partition like this in the following manner, for example. We start by defining that
\begin{equation*}
V_n(z):= \bigcup \bigl\{ y+\cu_n \,|\, y \in 3^n \Zd, \; y+\cu_n \subseteq z+\diamondsuit_n \bigr\}.
\end{equation*}
Then, after given the sets~$V_n(z),\ldots,V_j(z)$, we define~$V_{j-1}(z)$ by setting that
\begin{equation*}
V_{j-1}(z):= \bigcup 
\bigl\{
y+ \cu_{j-1} \,|\,
y\in 3^{j-1}\Zd,\, 
y+ \cu_{j-1} \subseteq 
(z+\!\diamondsuit_n) \setminus ( V_{n}(z)  \cup \ldots \cup V_{j}(z) ) 
\bigr \}.
\end{equation*}
Now, we note from \cite[Lemma 5.2]{ak} that the coarse-grained matrices $\mathbf{A}(z+\diamondsuit_n)$ are subadditive with respect to disjoint partitions in the sense that 
\begin{equation}\label{subaddproperty}
    \bfA(z+\diamondsuit_n) 
\leq 
\sum_{j=-\infty}^n
\frac{|V_j(z)|}{|\diamondsuit_n|}
\avsum_{y \in 3^j \Zd \cap V_j(z)}
\bfA(y + \cu_j). 
\end{equation}
Consequently, \eqref{e.final.P2.again} and \eqref{prop21pfmeasbds} provide that
\begin{align*}
\bfA(z+\diamondsuit_n) 
&
\leq 
\biggl(1+\tilde\delta 3^{(\gamma-\theta)l}  
\sum_{j=-\infty}^n
\frac{|V_j(z)|}{|\diamondsuit_n|}
3^{\gamma(m-j)} \Bigl(\frac{\X_{\mathcal{H}}}{3^m}\Bigr)^{\! \theta} 
\biggr) \bfAhom  
\notag \\ &
\leq 
 \biggl( 1 +\tilde\delta 3^{(\gamma-\theta)l} \sum_{j=-\infty}^{n}
C\Pi_{\shom} ^{\nicefrac12}3^{j-n}
3^{\gamma(m-j)} \Bigl(\frac{\X_{\mathcal{H}}}{3^m}\Bigr)^{\! \theta} 
\biggr) \bfAhom  
\notag \\ &
\leq 
\biggl( 1 + \tilde\delta 3^{(\gamma-\theta)l} 3^{\gamma(m-n)} \frac{C\Pi_{\shom}^{\nf 12} }{1-\gamma} \Bigl(\frac{\X_{\mathcal{H}}}{3^m}\Bigr)^{\! \theta} 
\biggr) \bfAhom.
\end{align*} 
By choosing~$\tilde\delta:=  \left(\frac{C3^{(\gamma-\theta)l}\Pi_{\shom}^{\nf 12} }{1-\gamma}\right)^{\!-1}$, we obtain the desired claim of~\eqref{renormellipt.again}. The proof is complete. 
\end{proof}

\subsection*{The adapted geometry}

The most fundamental basic premise in homogenization theory is that for a scale that is large enough, the equation of (\ref{ellipeq}) should \emph{homogenize} to some deterministic equation of the form
\begin{align}\label{homeq}
    -\nabla\cdot \overline{\mathbf{a}}\nabla u=0.
\end{align}
Above, we call the deterministic matrix $\overline{\mathbf{a}}$ the \emph{homogenized matrix}. In other words, the solutions of the equation (\ref{ellipeq}) should converge in sufficiently large scales to the solutions of the homogenized equation (\ref{homeq}). The high-contrast setting causes certain complications in the analysis conducted on our basic objects because otherwise some of them would not be properly defined within the changed geometry. For example, the gradient function $\nabla u$ in \eqref{ellipeq} might not be controllable or even well-defined unless adapted properly by $\s^{1/2}$, as in \eqref{introresult1}. Another valid reason for utilizing the adapted geometry like this is that our estimates would necessarily contain some dependencies on the ellipticity constants. That would be quite problematic for deriving quantitative estimates, since their ratio can be arbitrarily large here.

Naturally, we wish to avoid all of this, and \cite{AK.HC} tackles this problem by introducing the \emph{adapted cubes} $\diamondsuit_n$ while having an ongoing renormalization argument in the background. Our approach in this paper also utilizes this adapted geometry. Hence, we need to redefine the usual open
balls in order to respond to the needs emerging from the convoluted high-contrast framework. Namely, if we denote the expectation of $\mathbf{s}(\square_n)$ by $\overline{\mathbf{s}}(\square_n):=\mathbb{E}(\mathbf{s}(\square_n))$, then we define for every $r>0$ and $y\in\R^d$ that
\begin{align}\label{defadaptedballs}
    B_r:=\{x\in\R^d\; :\; |\mathbf{q}_0^{-1}x|<r \} \qand B_r(y):=\{x\in\R^d\; :\; |\mathbf{q}_0^{-1}(y-x)|<r \}.
\end{align}

Here, essentially $\mathbf{q}_0\approx \overline{\lambda}^{-1/2}\overline{\mathbf{s}}^{1/2}$, which can be seen from our selection of $\mathbf{q}_0$ in the proof of Proposition \ref{p.Psimplyrenormellipt}, where $\overline{\mathbf{s}}$ is the limit of all $\overline{\mathbf{s}}(\square_n)$ as $n\to\infty$ and $\overline{\lambda}$ is the smallest positive eigenvalue of~$\overline{\mathbf{s}}$.
We defined the quantity $\mathbf{q}_0$ in~(\ref{defqzero.proof}) more accurately, but most of the time this approximate description suffices for our demands. To put it simply, (almost) all of the geometry from here on will always be within this adapted geometry. Note that we are handling this new geometry mostly via the aforementioned adapted cubes, but the adapted balls will also be useful for us in Chapter~4, for example.

With this in mind, we recall, for every $k\in\N$, the following subspaces $\mathcal{A}_k$ and $\overline{\mathcal{A}}_k$ of $\mathcal{A}(\R^d)$ and~$\overline{\mathcal{A}}(\R^d)$, respectively. Namely, with this new notation, we set (for adapted open balls this time) that
\begin{equation*}
    \mathcal{A}_k:=\left\{u\in \mathcal{A}(\Rd)\; | \;  \limsup_{r\to\infty} r^{-(k+1)}\|u\|_{\underline{L}^2(B_r)}=0 \right\}
\end{equation*}
and
\begin{equation*}
    \overline{\mathcal{A}}_k:=\left\{u\in \overline{\mathcal{A}}(\Rd)\; | \;  \limsup_{r\to\infty} r^{-(k+1)}\|u\|_{\underline{L}^2(B_r)}=0 \right\}.
\end{equation*}

Lastly, let us recall the \emph{adapted lattice} $\mathbf{q}_0(\Z^d)$ denoted by $\mathbb{L}_0$, that is, $\mathbb{L}_0:=\{\mathbf{q}_0z\; |\; z\in\Z^d \}$. Note that~$\mathbb{L}_0$ ultimately has the same useful properties as the standard lattice~$\Z^d$. Especially, we have that $3^n\mathbb{L}_0\subset \Z^d$ for large enough $n\in\N$. In fact, we choose~$k_0$ in Proposition~\ref{p.Psimplyrenormellipt} so that $3^{k_0}\mathbb{L}_0\subset \Z^d$.

\subsection*{Some previous results}

This section is reserved for listing all of the useful results from the prior homogenization literature (emerging mainly from \cite{ak}, \cite{AK.HC}, and \cite{akm}) that we will require later. Most of the details in the proofs of these results are generally omitted below, but the respective references are always provided and clearly stated for interested readers. Furthermore, some of the claims are slightly modified in accordance with our specific situation. In these cases, we also present short proof sketches to justify them.

In \cite{AK.HC}, much of the content in Chapters 2--4 is reserved to show that the \emph{homogenization error} $\mathcal{E}_n$ is bounded from above by coarse-grained quantities involving sufficiently large scales. In other words, it is shown that we can force $\mathcal{E}_n$ to be small enough for our quantitative computations. Heuristically speaking, the homogenization error $\mathcal{E}_n$ measures the difference that the solution of equation (\ref{ellipeq}) has compared to the solution of the homogenized equation (\ref{homeq}). Consequently, let us begin by reviewing the discussion presented in \cite[Chapter 5]{AK.HC}, which provides a handy upper bound for the error terms $\mathcal{E}_n$ defined below in \eqref{thm21pfstep5defen}. Essentially, this result follows from Assumption \ref{d.renormellipt} in our case.

Before formulating our version of this result, we will provide a rigorous definition for the homogenization error $\mathcal{E}_n$. Namely, by utilizing the same notation as in \cite{AK.HC}, we define for each fixed~$s\in(0,\nf12)$,~$m,n\in\N$ with $n\leq m$, and~$z \in 3^n\mathbb{L}_0 \cap \diamondsuit_m$ that
\begin{equation}\label{thm21pfstep5defen}
\begin{aligned}
&\mathcal{E}_{s}(z+\diamondsuit_n;\mathbf{a},\ahom)^2 \\ 
&:=\left(1-3^{-2s}\right) \! \! \sum_{j=-\infty}^n \! \!  3^{-2s(n-j)}
\! \! \! \! \! \! \max_{y\in z+3^j\mathbb{L}_0 \cap \diamondsuit_n}\max_{|e|= 1}\frac{1}{2}\Bigl(J(\cdot,\shom^{-\nf12}e,\ahom^t\shom^{-\nf 12}e)+J^*(\cdot,\shom^{-\nf 12}e,\ahom \shom^{-\nf 12}e)\Bigr)(y+\diamondsuit_j).
\end{aligned}
\end{equation}
Here, the variational quantities~$J(U,p,q)$ and~$J^*(U,p,q)$ were defined in~\eqref{defJUpq} and~\eqref{defJStarUpq}, respectively. We may refer to this error quantity simply as $\mathcal{E}_n$ if there is no danger of confusion. The factor of~$1-3^{-2s}$ on the right-hand side of \eqref{thm21pfstep5defen} is simply a normalizing constant that allows the mapping of~$n\mapsto (1-3^{-2s})3^{-2ns}$ to be a probability mass function over $\N$.

Essentially, this choice of the homogenization error is involved in the upper bounds of the results in~\cite[Chapter 5]{AK.HC} that we apply later in this paper. It also coincides with the initial induction step in Step 1 of Theorem \ref{thmhighreg} and is bounded by the right-hand side of Proposition~\ref{corcontrolen}. Especially, by looking at \cite[Chapter 5]{AK.HC}, we can see that the aforementioned variational quantities are nicely bounded by a finite spectral norm, which in turn makes the homogenization error finite as well. More precisely, for every bounded Lipschitz domain $U\subset \R^d$ and $e\in\R^d$ with $|e|\leq 1$, we have that
\begin{equation}\label{varqtysarebdd}
\begin{aligned}
    J(&U,\shom^{-\nf12}e,\ahom^t\shom^{-\nf 12}e)+J^*(U,\shom^{-\nf 12}e,\ahom \shom^{-\nf 12}e) \\
    &=\frac{1}{2}\begin{bmatrix}
        -e \\ e
    \end{bmatrix}\cdot \bigl(\bfAhom^{-\nicefrac12}
\bfA(U)\bfAhom^{-\nicefrac12}-I_{2d} \bigr)\begin{bmatrix}
        -e \\ e
    \end{bmatrix}+\frac{1}{2}\begin{bmatrix}
        e \\ e
    \end{bmatrix}\cdot \bigl(\bfAhom^{-\nicefrac12}
\bfA(U)\bfAhom^{-\nicefrac12}-I_{2d} \bigr)\begin{bmatrix}
        e \\ e
    \end{bmatrix} \\
    &\leq 2\left| \bigl(\bfAhom^{-\nicefrac12}
\bfA(U)\bfAhom^{-\nicefrac12}-I_{2d} \bigr)_+ \right|,
\end{aligned}
\end{equation}
where we assumed that $\khom=0$ as explained in Chapter 1. Above, we also denote the positive part of a symmetric matrix $A$ by $A_+$. In summary, the error terms satisfy all the required aspects that we desire from them.

\begin{proposition}[Controlling the homogenization error]\label{corcontrolen}
Under Assumption \ref{d.renormellipt}, we have for every~$s\in(\nf \gamma2,\nf12)$ and $m \in \N$ with~$3^m\geq \mathcal{X_H}$ that there exists a constant $C(d)<\infty$ such that    
\begin{equation}
\label{e.mathcalE.bounds}
\mathcal{E}_{s}(\diamondsuit_m;\mathbf{a},\ahom)
\leq 
\left(\frac{C}{2s - \gamma}    \left(\frac{\mathcal{X_H}}{3^m} \right)^{\theta}\right)^{\nf12}.
\end{equation}
\end{proposition}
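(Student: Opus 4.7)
The strategy is to bound the scale-$j$ summand in the definition~\eqref{thm21pfstep5defen} of $\mathcal{E}_s^2$ using the matrix inequalities that Assumption~\ref{d.renormellipt} implies (as displayed just after that assumption), and then to sum the resulting geometric series. I would write $\eta_j:=3^{\gamma(m-j)}(\mathcal{X}_{\mathcal{H}}/3^m)^{\theta}$, so that for every $j\leq m$ and every $y\in 3^j\mathbb{L}_0\cap \diamondsuit_m$ one has
\begin{equation*}
(\mathbf{s}+\mathbf{k}^t\mathbf{s}_*^{-1}\mathbf{k})(y+\diamondsuit_j) \leq (1+\eta_j)\shom
\qand
\mathbf{s}_*^{-1}(y+\diamondsuit_j) \leq (1+\eta_j)\shom^{-1}.
\end{equation*}
As noted directly after Assumption~\ref{d.renormellipt}, one may take $\khom=0$ without loss of generality, in which case $\ahom=\shom$ and the arguments $q=\ahom^{t}\shom^{-\nf12}e$ in $J$ and $q=\ahom\shom^{-\nf12}e$ in $J^*$ both reduce to the common vector $q=\shom^{\nf12}e$.

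\smallskip

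The central step is the pointwise scale-$j$ estimate
\begin{equation*}
\bigl(J(\cdot,\shom^{-\nf12}e,\shom^{\nf12}e)+J^*(\cdot,\shom^{-\nf12}e,\shom^{\nf12}e)\bigr)(y+\diamondsuit_j) \leq 2\eta_j
\end{equation*}
for every unit vector $|e|=1$. Expanding the closed-form quadratic expressions of $J$ and $J^*$ in terms of the coarse-grained matrices $\mathbf{s}(U)$, $\mathbf{s}_*(U)$, and $\mathbf{k}(U)$ (the formula for $J^*$ uses $-\mathbf{k}$ in place of $\mathbf{k}$, because $\mathbf{a}^t$ has antisymmetric part $-\mathbf{k}$), each contains a linear cross-term of the form $\pm q\cdot \mathbf{s}_*^{-1}(U)\mathbf{k}(U)p$ with opposite signs, which therefore \emph{cancels} in the sum $J+J^*$. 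This cancellation is the crucial point: estimating the $\mathbf{k}$-cross-term in either summand individually only by Cauchy--Schwarz would yield the weaker rate $O(\eta_j^{\nf12})$, which would lose a factor of two in the final exponent $\theta$. After the cancellation, one is left with
\begin{equation*}
(J+J^*)(y+\diamondsuit_j) = e\cdot\shom^{-\nf12}(\mathbf{s}+\mathbf{k}^t\mathbf{s}_*^{-1}\mathbf{k})(y+\diamondsuit_j)\shom^{-\nf12}e + e\cdot\shom^{\nf12}\mathbf{s}_*^{-1}(y+\diamondsuit_j)\shom^{\nf12}e - 2|e|^2,
\end{equation*}
to which the two displayed matrix bounds immediately give $\bigl((1+\eta_j)+(1+\eta_j)-2\bigr)|e|^2 = 2\eta_j$.

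\smallskip

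Substituting this scale-$j$ bound into~\eqref{thm21pfstep5defen} (the maxima over $y$ and $e$ being harmless because the bound depends only on $\eta_j$), the estimate reduces to summing a geometric series:
\begin{equation*}
\mathcal{E}_s(\diamondsuit_m)^2 \leq 2(1-3^{-2s})\sum_{j=-\infty}^{m} 3^{-2s(m-j)}\eta_j = 2(1-3^{-2s})\Bigl(\frac{\mathcal{X}_{\mathcal{H}}}{3^m}\Bigr)^{\!\theta}\sum_{k=0}^{\infty} 3^{-(2s-\gamma)k} \leq \frac{C}{2s-\gamma}\Bigl(\frac{\mathcal{X}_{\mathcal{H}}}{3^m}\Bigr)^{\!\theta}.
\end{equation*}
The series converges precisely because $s>\nf\gamma2$, and the explicit $(2s-\gamma)^{-1}$ factor comes from the elementary bound $1-3^{-(2s-\gamma)}\geq c(2s-\gamma)$, valid on any bounded range of $2s-\gamma$. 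Taking square roots finishes the proof. The main obstacle is the scale-$j$ bound above: the cancellation of the $\mathbf{k}$-cross-term in $J+J^*$ is what produces the sharp rate $\eta_j$, and therefore the correct exponent $\theta$ and threshold $s>\nf\gamma2$ in the final estimate.
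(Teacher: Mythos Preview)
Your proof is correct and follows essentially the same route as the paper's: bound the scale-$j$ summand $J+J^*$ by $2\eta_j$ and then sum the resulting geometric series in $3^{-(2s-\gamma)(m-j)}$. The only difference is that the paper packages the scale-$j$ bound as $|(\bfAhom^{-\nf12}(\bfA(y+\diamondsuit_j)-\bfAhom)\bfAhom^{-\nf12})_+|$ and cites \cite[Chapter~5]{AK.HC} for the link to $J+J^*$, whereas you unpack the block structure and make the cross-term cancellation explicit---a more self-contained but otherwise identical argument.
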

\begin{proof}
Fix $m\in\N$ and $s \in (\nf \gamma 2,\nf12)$. 
By~\eqref{renormellipt} and \eqref{varqtysarebdd}, we obtain that
\begin{align*}
\mathcal{E}_{s}(\diamondsuit_m;\mathbf{a},\ahom)^2
&
\leq 
\sum_{n=-\infty}^m 3^{-2s(m-n)}\max_{y\in 3^n\mathbb{L}_0\cap\diamondsuit_m} \Bigl| \bigl(\bfAhom^{-\nicefrac12}
(\bfA(y+\diamondsuit_n)-\bfAhom)
\bfAhom^{-\nicefrac12} \bigr)_+ \Bigr|
\notag \\ &
\leq
\left(\frac{\mathcal{X_H}}{3^m} \right)^{\theta}
\sum_{n=-\infty}^m 3^{-2s(m-n)}  3^{\gamma(m-n)}
\leq
\frac{C}{2s - \gamma} 
\left(\frac{\mathcal{X_H}}{3^m} \right)^{\theta}.
\end{align*}
This completes the proof by taking the square roots from both sides.
\end{proof}

The role of the random variable $\mathcal{X_H}$ is to act as the minimal scale in which homogenization occurs.
More specifically, it is the smallest scale in which the relative homogenization error is smaller than the \emph{error tolerance}~$\delta\in(0,1]$ introduced in \cite{AK.HC} that we also briefly considered in Proposition \ref{p.Psimplyrenormellipt}. Technically, some of our estimates should contain this important parameter~$\delta$, but we chose to omit it, since it does not play a significant role in our analysis. To be more precise, the role of~$\delta$ is important in the larger picture when considering homogenization, as it determines the upper bound for the size of the difference between $u$ in \eqref{ellipeq} and the homogenized solution~$\overline{u}$. However, from our perspective, this would simply mean carrying one extra factor around, which does not play any role in practice. For this reason, we have removed this parameter from our considerations to simplify the expressions.

Next, we wish to establish a result to control the negative Sobolev seminorms in terms of $L^2$ norms. This leads us to the following slightly modified result presented in \cite[Lemma 5.5]{AK.HC} that utilizes Proposition \ref{corcontrolen}. Although it is true that $\nf{\mathcal{X_H}}{3^n}\leq 1$ below, we want to keep this factor included on the right-hand side, as it keeps track of the quantitative decay of the homogenization error that is essential for obtaining quantitative results.

\begin{lemma}[Controlling the Sobolev seminorms]\label{lemadaptpoinc}
    Let $n\in\N$, $s\in(\nf\gamma2,\nf12)$, and~$u\in\mathcal{A}(\diamondsuit_n)$ with $3^n\geq\mathcal{X_H}$. Then, there exists a constant $C(d,s,\gamma)<\infty$ such that
    \begin{align}\label{lem23res1}
3^{-ns}\left[\overline{\mathbf{s}}^{1/2}\nabla u \right]_{\Hminusul(\diamondsuit_n)}+3^{-ns}\left[\overline{\mathbf{s}}^{-1/2}\mathbf{a}\nabla u \right]_{\Hminusul(\diamondsuit_n)}\leq C\left(1+C\left(\frac{\mathcal{X_H}}{3^n} \right)^{\nf{\theta}{2}}\right)\|\mathbf{s}^{1/2}\nabla u \|_{\underline{L}^2(\diamondsuit_n)}
    \end{align}
    and
    \begin{align*}
3^{-ns} \left[\overline{\mathbf{s}}^{-1/2}(\mathbf{a}\nabla u - \ahom \nabla u) \right]_{\Hminusul(\diamondsuit_n)}
\leq 
C\left(\frac{\mathcal{X_H}}{3^n} \right)^{\nf{\theta}{2}}\|\mathbf{s}^{1/2}\nabla u \|_{\underline{L}^2(\diamondsuit_n)}.
    \end{align*}
\end{lemma}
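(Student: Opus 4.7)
The plan is to reduce both estimates to the abstract bounds established in~\cite[Lemma 5.5]{AK.HC} and then substitute the control on the homogenization error $\mathcal{E}_s(\diamondsuit_n;\mathbf{a},\ahom)$ supplied by Proposition~\ref{corcontrolen}. That reference delivers, for any $u\in\mathcal{A}(\diamondsuit_n)$ and $s\in(0,\nf12)$, a negative-Sobolev estimate of the form $3^{-ns}[\overline{\mathbf{s}}^{-1/2}(\mathbf{a}\nabla u - \ahom\nabla u)]_{\Hminusul(\diamondsuit_n)} \leq C\,\mathcal{E}_s\,\|\mathbf{s}^{1/2}\nabla u\|_{\underline{L}^2(\diamondsuit_n)}$ together with a companion estimate for $[\overline{\mathbf{s}}^{1/2}\nabla u]_{\Hminusul(\diamondsuit_n)}$ carrying an order-one leading constant plus an $\mathcal{E}_s$-correction. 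The remaining task is to verify that these bounds apply under Assumption~\ref{d.renormellipt} alone and to combine them with Proposition~\ref{corcontrolen}.

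First I would dispatch the second displayed inequality, which is essentially a direct citation: invoking~\cite[Lemma 5.5]{AK.HC} and substituting the bound $\mathcal{E}_s\leq(C/(2s-\gamma))^{\nf12}(\mathcal{X_H}/3^n)^{\nf\theta2}$ from Proposition~\ref{corcontrolen} yields the claim, the factor $(2s-\gamma)^{-\nf12}$ being absorbed into the final constant $C(d,s,\gamma)$ because $s\in(\nf\gamma2,\nf12)$ is fixed. For the first inequality I would decompose $\mathbf{a}\nabla u=(\mathbf{a}-\ahom)\nabla u+\ahom\nabla u$ and apply the triangle inequality for the $\Hminusul$-seminorm. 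The piece involving $(\mathbf{a}-\ahom)\nabla u$ is already handled by the second inequality and accounts for the $(\mathcal{X_H}/3^n)^{\nf\theta2}$-contribution. For the $\ahom$-piece, the convention $\khom=0$ permitted just after Assumption~\ref{d.renormellipt} reduces $\overline{\mathbf{s}}^{-1/2}\ahom\nabla u$ to $\overline{\mathbf{s}}^{1/2}\nabla u$, so one only needs the stand-alone estimate $3^{-ns}[\overline{\mathbf{s}}^{1/2}\nabla u]_{\Hminusul(\diamondsuit_n)}\leq C(1+\mathcal{E}_s)\|\mathbf{s}^{1/2}\nabla u\|_{\underline{L}^2(\diamondsuit_n)}$ from the same reference, which both supplies the left-most summand on the left-hand side of \eqref{lem23res1} and furnishes the bare constant $C$ in the prefactor $C(1+C(\mathcal{X_H}/3^n)^{\nf\theta2})$.

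The main obstacle is verifying that~\cite[Lemma 5.5]{AK.HC}, originally stated within the framework (P1)--(P3), actually only draws on the deterministic coarse-grained energy inequalities $\mathbf{s}(\diamondsuit_n)\leq(1+\mathrm{small})\shom$ and $\mathbf{s}_*^{-1}(\diamondsuit_n)\leq(1+\mathrm{small})\shom^{-1}$ displayed just after~(\ref{renormellipt}), both of which follow from Assumption~\ref{d.renormellipt}. Its proof is a multiscale Poincar\'e argument in the adapted geometry combined with these two coarse-graining inequalities, and hence carries over verbatim; once this deterministic transfer has been checked, the lemma follows by assembling the three pieces together with Proposition~\ref{corcontrolen}.
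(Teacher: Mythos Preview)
Your proposal is correct and follows essentially the same route as the paper: invoke \cite[Lemma~5.5]{AK.HC} to obtain both seminorm bounds with the error $\mathcal{E}_s$ on the right, then substitute Proposition~\ref{corcontrolen}. Two small points are worth noting. First, your triangle-inequality decomposition of $\mathbf{a}\nabla u$ is unnecessary: the cited lemma already delivers the combined bound $[\s_1^{1/2}\nabla v]_{\Hminusul}+[\s_1^{-1/2}(\widetilde{\mathbf{a}}-\k_1)\nabla v]_{\Hminusul}\leq C(1+\mathcal{E}_s)\|\widetilde{\s}^{1/2}\nabla v\|_{\underline{L}^2}$, and once $\k_1=\khom=0$ this is exactly the left-hand side of~\eqref{lem23res1}. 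Second, you have misidentified the ``main obstacle.'' The result \cite[Lemma~5.5]{AK.HC} is a deterministic inequality with $\mathcal{E}_s$ as a free parameter, so there is no issue about whether it requires (P1)--(P3) rather than Assumption~\ref{d.renormellipt}. The actual technical step the paper carries out---and which your sketch omits---is the change of variables: the reference is stated for a solution $v$ of $-\nabla\cdot\widetilde{\a}\nabla v=0$ on the Euclidean unit cube $\square_0$, so one must set $u(x)=v(3^n\mathbf{q}_0 x)$, $\a(x)=\widetilde{\a}(3^n\mathbf{q}_0 x)$, $\shom=\overline{\lambda}\mathbf{q}_0\a_1\mathbf{q}_0$, and track how the seminorms and the error quantity transform under this rescaling to land on $\diamondsuit_n$.
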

\begin{proof}
    Recall first that the negative Sobolev seminorm is bounded from above by the full negative Sobolev norm. Furthermore, we will denote the symmetric part and the antisymmetric part of an arbitrary matrix $\a_1\in\R_+^{d\times d}$ by $\s_1$ and~$\k_1$, respectively. Then \cite[Lemma 5.5]{AK.HC} provides for all functions $v$ solving the elliptic equation of \eqref{ellipeq} for $\widetilde{\a}\in\Omega$ in $\square_0$ that
    \begin{align*}
        \|\s_1^{1/2}\nabla v \|_{\Hminusul(\square_0)}+\|\s_1^{-1/2}(\widetilde{\mathbf{a}}-\k_1)\nabla v \|_{\Hminusul(\square_0)}\leq C\left(1+\mathcal{E}_s(\square_0;\widetilde{\a},\a_1)\right)\|\widetilde{\mathbf{s}}^{1/2}\nabla v \|_{\underline{L}^2(\square_0)}
    \end{align*}
    and
    \begin{align*}
        \|\s_1^{-1/2}(\widetilde{\mathbf{a}}\nabla v - \a_1 \nabla v) \|_{\Hminusul(\square_0)}\leq 
C\mathcal{E}_s(\square_0;\widetilde{\a},\a_1)\|\widetilde{\mathbf{s}}^{1/2}\nabla v \|_{\underline{L}^2(\square_0)}.
    \end{align*}
    Here, we have for the homogenization errors within standard triadic cubes $\square_n$ that
\begin{equation*}
\begin{aligned}
&\mathcal{E}_{s}(\square_n;\widetilde{\a},\a_1)^2 \\ 
&:=\left(1-3^{-2s}\right) \! \! \sum_{j=-\infty}^n \! \!  3^{-2s(n-j)}
\! \! \! \! \! \! \max_{y\in 3^j\Z^d \cap \square_n}\max_{|e|= 1}\frac{1}{2}\Bigl(J(\cdot,\shom^{-\nf12}e,\ahom^t\shom^{-\nf 12}e)+J^*(\cdot,\shom^{-\nf 12}e,\ahom \shom^{-\nf 12}e)\Bigr)(y+\square_j)
\end{aligned}
\end{equation*}
    for every $n$ and $s$. By making a change of variables for all $x\mapsto \mathbf{q}_0x$, as we recall from earlier that $\diamondsuit_n=\mathbf{q}_0(\square_n)=\{ x\in\R^d\; |\; \mathbf{q}_0^{-1}x\in\square_n \}$, we then have that
    \begin{align*}
    \mathcal{E}_s(\diamondsuit_n;\widetilde{\a},\a_1)=\mathcal{E}_s(\square_n; \overline{\lambda}^{-1}\mathbf{q}_0^{-1}\widetilde{\a}\mathbf{q}_0^{-1}, \overline{\lambda}^{-1}\mathbf{q}_0^{-1}\a_1\mathbf{q}_0^{-1}).
    \end{align*}
    We can also set that $\khom=\k_1=0$ as established in Chapter 1. Then, we will perform a change of variables so that
    \begin{align*}
        u(x):=v(3^{-n}\mathbf{q}_0^{-1}x), \quad \a(x):=\overline{\lambda}\mathbf{q}_0\widetilde{\a}(3^{-n}\mathbf{q}_0^{-1}x)\mathbf{q}_0, \qand \shom:=\overline{\lambda}\mathbf{q}_0\a_1\mathbf{q}_0
    \end{align*}
    and enlarge the domain of the norms from $\square_0$ to $\diamondsuit_n$. Consequently, by noting that $\mathcal{E}_s(\square_0)\leq C\mathcal{E}_s(\diamondsuit_n)$ after a harmless change of variables, Proposition \ref{corcontrolen} yields the claim.
\end{proof}

The concept of \emph{harmonic approximation} is very important in the theory of stochastic homogenization, especially from the viewpoint of regularity theory. Namely, for every solution $u$ of the basic elliptic equation in (\ref{ellipeq}), there exists an $\overline{\mathbf{a}}$-harmonic function that approximates $u$ rather closely. Let us then state a high-contrast version of this fact below that follows quite closely to \cite[Proposition 5.3]{AK.HC}. It should also be noted that the proof of this fact implies the overall homogenization of the Dirichlet problem in high contrast.

\begin{proposition}[Harmonic approximation in high contrast]\label{propharmappr}
    Suppose that $m\in\N$ so that $3^m\geq \mathcal{X_H}$ and $s\in(\nf\gamma2,\nf12)$. Then, for each $u\in\mathcal{A}(\diamondsuit_m)$, there exist an $\overline{\mathbf{a}}$-harmonic function $\overline{u}$ in $\diamondsuit_{m}$ with $u-\overline{u}\in H_c^{1-s}(\diamondsuit_m)$ and a positive constant $C(d,s,\gamma)<\infty$ such that
    \begin{align*}
3^{-m}\overline{\lambda}^{1/2}\|u-\overline{u}\|_{\underline{L}^2(\diamondsuit_{m})}+3^{-ms}\left[\overline{\mathbf{A}}^{1/2}\begin{bmatrix}
       \nabla u- \nabla \overline{u} \\ \mathbf{a}\nabla u -\overline{\mathbf{a}}\nabla\overline{u}
    \end{bmatrix} \right]_{\Hminusul(\diamondsuit_{m})}\leq C\left(\frac{\mathcal{X_H}}{3^m} \right)^{\nf{\theta}{2}}\|\mathbf{s}^{1/2}\nabla u \|_{\underline{L}^2(\diamondsuit_m)}.
    \end{align*}
    Conversely, for each $\overline{\mathbf{a}}$-harmonic function $\overline{u}\in H^{1-s}(\diamondsuit_m)$ in $\diamondsuit_{m}$, there exist $u\in\mathcal{A}(\diamondsuit_m)$ with $u-\overline{u}\in H_c^{1-s}(\diamondsuit_m)$ and a positive constant $C(d,s,\gamma)<\infty$ such that
    \begin{equation*}
    \begin{aligned}
3^{-m}\overline{\lambda}^{\nf12} \|u-\overline{u}\|_{\underline{L}^2(\diamondsuit_{m})}&+3^{-ms}\left[\overline{\mathbf{A}}^{1/2}\begin{bmatrix}
       \nabla u- \nabla \overline{u} \\ \mathbf{a}\nabla u -\overline{\mathbf{a}}\nabla\overline{u}
    \end{bmatrix} \right]_{\Hminusul(\diamondsuit_{m})}\leq C\left(\frac{\mathcal{X_H}}{3^m} \right)^{\nf{\theta}{2}}\|\mathbf{s}^{1/2}\nabla u \|_{\underline{L}^2(\diamondsuit_m)}.
    \end{aligned}
    \end{equation*}
\end{proposition}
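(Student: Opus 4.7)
The plan is to define $\overline{u}$ as the $\overline{\mathbf{a}}$-harmonic replacement of $u$ that matches its trace on $\partial\diamondsuit_m$, and then to extract each of the three terms on the left-hand side by dualizing one simple identity against test functions arising from auxiliary constant-coefficient Dirichlet problems, with Lemma~\ref{lemadaptpoinc} supplying the small factor $(\mathcal{X_H}/3^m)^{\nf\theta 2}$ in every case.

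First I would define $\overline{u}$ as the unique solution of the homogenized Dirichlet problem $-\nabla\cdot\overline{\mathbf{a}}\nabla\overline{u}=0$ in $\diamondsuit_m$ with $\overline{u}-u\in H^1_0(\diamondsuit_m)$. Since by the convention $\overline{\mathbf{k}}=0$ the matrix $\overline{\mathbf{a}}=\overline{\mathbf{s}}$ is uniformly elliptic, this problem is well-posed by Lax--Milgram. Setting $w:=u-\overline{u}\in H^1_0(\diamondsuit_m)$ and testing the PDEs for $u$ and $\overline{u}$ against an arbitrary $v\in H^1_0(\diamondsuit_m)$ produces the key identity
\begin{equation*}
\int_{\diamondsuit_m}\overline{\mathbf{s}}\nabla w\cdot\nabla v \; = \; \int_{\diamondsuit_m}(\overline{\mathbf{a}}-\mathbf{a})\nabla u\cdot\nabla v,
\end{equation*}
whose right-hand side is exactly the quantity controlled in $\Hminusul$ by Lemma~\ref{lemadaptpoinc}.

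I would then unpack the left-hand side of the desired estimate term by term via duality. For the $\underline{L}^2$-bound on $w$, given $\eta\in L^2(\diamondsuit_m)$ with $\|\eta\|_{\underline{L}^2}\leq 1$, I solve $-\nabla\cdot\overline{\mathbf{s}}\nabla\psi=\eta$ with $\psi\in H^1_0(\diamondsuit_m)$; substituting $v=\psi$ in the identity and then integrating $w\eta$ by parts gives
\begin{equation*}
\fint_{\diamondsuit_m} w\eta \; = \; -\fint_{\diamondsuit_m}\overline{\mathbf{s}}^{-\nf 12}(\mathbf{a}-\overline{\mathbf{a}})\nabla u\cdot \overline{\mathbf{s}}^{\nf 12}\nabla\psi,
\end{equation*}
which I would estimate by $\Hminusul$--$\underline{H}^s$ duality: Lemma~\ref{lemadaptpoinc} controls the first factor, and elliptic regularity on the Lipschitz domain $\diamondsuit_m$ with constant coefficients $\overline{\mathbf{s}}$ (valid for $s\in(0,\nf12)$) controls $\|\overline{\mathbf{s}}^{\nf12}\nabla\psi\|_{\underline{H}^s(\diamondsuit_m)}$ with the correct powers of $3^m$ and $\overline{\lambda}^{-\nf 12}$. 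Taking the supremum over $\eta$ and multiplying by $3^{-m}\overline{\lambda}^{\nf12}$ then yields the stated $\underline{L}^2$-bound. For the $\Hminusul$-seminorm of $\overline{\mathbf{s}}^{\nf 12}(\nabla u-\nabla\overline{u})$, an analogous dualization against smooth vector fields $\eta$ with $\|\eta\|_{\underline{H}^s}\leq 1$ through the auxiliary problem $-\nabla\cdot\overline{\mathbf{s}}\nabla\psi=-\nabla\cdot(\overline{\mathbf{s}}^{\nf 12}\eta)$ gives the analogous bound. The flux seminorm follows at once from the splitting
\begin{equation*}
\overline{\mathbf{s}}^{-\nf 12}(\mathbf{a}\nabla u-\overline{\mathbf{a}}\nabla\overline{u}) \; = \; \overline{\mathbf{s}}^{-\nf 12}(\mathbf{a}-\overline{\mathbf{a}})\nabla u \, + \, \overline{\mathbf{s}}^{\nf 12}(\nabla u-\nabla\overline{u}),
\end{equation*}
whose first summand is bounded directly by Lemma~\ref{lemadaptpoinc} and whose second was just handled.

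For the converse, given an $\overline{\mathbf{a}}$-harmonic $\overline{u}\in H^{1-s}(\diamondsuit_m)$, I would construct $u\in\mathcal{A}(\diamondsuit_m)$ as the solution of $-\nabla\cdot\mathbf{a}\nabla u=0$ with $u-\overline{u}\in H^1_{\mathbf{s},0}(\diamondsuit_m)$ via Lax--Milgram in the weighted Sobolev setting and then run the same three duality arguments. The principal difficulty I anticipate is tracking the scaling in the auxiliary Dirichlet problems so that the volume factor, the ellipticity factor $\overline{\lambda}$, and the Lipschitz-domain fractional regularity for the constant-coefficient problem combine to reproduce exactly the exponents $3^{-m}\overline{\lambda}^{\nf12}$ and $3^{-ms}$ in the statement without introducing any hidden dependence on the ellipticity ratio $\Pi_{\overline{\mathbf{s}}}$; the restriction $s\in(\nf\gamma2,\nf12)$ (with $s<\nf12$ for trace regularity on the cube and $s>\nf\gamma2$ for Lemma~\ref{lemadaptpoinc}) is exactly what makes this balance work.
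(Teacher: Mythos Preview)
Your approach is quite different from the paper's. The paper does not prove harmonic approximation from scratch: it simply invokes \cite[Proposition~5.3]{AK.HC}, which already supplies the estimate on the unit cube with the homogenization error $\mathcal{E}_s$ on the right-hand side, then performs the change of variables $x\mapsto 3^n\mathbf{q}_0 x$ and applies Proposition~\ref{corcontrolen} to convert $\mathcal{E}_s$ into $(\mathcal{X_H}/3^m)^{\nf\theta2}$. All of the analytic content---the construction of $\overline{u}$ from $u$ and of $u$ from $\overline{u}$, and the duality estimate---is imported from \cite{AK.HC}.

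Your duality strategy is the natural moderate-contrast argument, and \emph{granted well-posedness} it works; it is essentially what \cite{AK.HC} carries out internally. The gap is that you treat well-posedness as routine, and in high contrast it is not. In the forward direction you write $\overline{u}-u\in H^1_0(\diamondsuit_m)$, but $u\in\mathcal{A}(\diamondsuit_m)\subset H^1_{\mathbf{s}}(\diamondsuit_m)$ only gives $\mathbf{s}^{1/2}\nabla u\in L^2$; since $\mathbf{s}$ may be degenerate, $u$ need not lie in $H^1$ and the boundary condition is ill-posed as stated. In the converse direction, ``Lax--Milgram in the weighted Sobolev setting'' is exactly where high contrast bites: coercivity of $(v,w)\mapsto\int\nabla v\cdot\mathbf{a}\nabla w$ on $H^1_{\mathbf{s},0}$ is fine (the antisymmetric part drops), but \emph{boundedness} fails, because $|\int\nabla v\cdot\mathbf{k}\nabla w|$ is controlled by $\|\mathbf{s}^{1/2}\nabla v\|_{L^2}(\int\nabla w\cdot\mathbf{k}^t\mathbf{s}^{-1}\mathbf{k}\nabla w)^{1/2}$, and $\mathbf{k}^t\mathbf{s}^{-1}\mathbf{k}$ is assumed merely $L^1_{\mathrm{loc}}$, not pointwise dominated by $\mathbf{s}$. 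This is precisely why \cite{AK.HC} introduces the space $H^1_{\mathbf{a}}$ and develops a separate well-posedness theory. Your closing remark that the principal difficulty is ``tracking the scaling'' underestimates this: once the adapted geometry $\mathbf{q}_0$ is in place the scaling is mechanical, but the function-space issues require the machinery of \cite{AK.HC} that the paper is content to cite.
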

\begin{proof}
    From \cite[Proposition 5.3]{AK.HC}, we will get both of the directions above. Namely, let us begin by fixing~$s \in (\nf\gamma2,\nf12)$ and~$\a_1 \in \R_+^{d\times d}$ satisfying the uniform ellipticity conditions of \eqref{unifellipconds} for some ellipticity constants $0<\lambda<\Lambda<\infty$ as required by \cite[Proposition 5.3]{AK.HC} stated below. Then, for $v\in H_{\widetilde{\a}}^1(\square_0)$ solving the elliptic equation of \eqref{ellipeq} for $\widetilde{\a}\in\Omega$ in $\square_0$ and $h\in H^{1-s}(\square_0)$ solving the homogenized equation of \eqref{homeq} for $\a_1$ in $\square_0$ so that $v-h\in H_c^{1-s}(\square_0)$, we have that
    \begin{equation}\label{prop24pfHCres1}
\| v - h\|_{\underline{L}^2(\square_{0})}+
       \|\nabla v- \nabla h \|_{\Hminusul(\square_{0})} + \|\widetilde{\mathbf{a}}\nabla v -\a_1\nabla h \|
    _{\Hminusul(\square_{0})}\leq C\mathcal{E}_{s}(\square_{0};\widetilde{\a},\a_1) \bigl\| \widetilde{\s}^{1/2} \nabla v  \bigr\|_{\underline{L}^{2}(\square_{0})}.
\end{equation}
Here, we utilized the fact again that the negative Sobolev seminorm is bounded above by the full negative Sobolev norm. We also note that $\|v-h\|_{L^2}\leq C\|\nabla(v-h)\|_{\Hminusul}$ above. As in the proof of Proposition \ref{lemadaptpoinc}, we have that $\mathcal{E}_s(\diamondsuit_m;\widetilde{\a},\a_1)=\mathcal{E}_s(\square_m; \overline{\lambda}^{-1}\mathbf{q}_0^{-1}\widetilde{\a}\mathbf{q}_0^{-1}, \overline{\lambda}^{-1}\mathbf{q}_0^{-1}\a_1\mathbf{q}_0^{-1})$. We apply Proposition \ref{corcontrolen} after the same changes of variables as in the proof of Proposition \ref{lemadaptpoinc}, that is,
\begin{align*}
        u(x):=v(3^{-m}\mathbf{q}_0^{-1}x), \quad \overline{u}(x):=h(3^{-m}\mathbf{q}_0^{-1}x), \quad \a(x):=\overline{\lambda}\mathbf{q}_0\widetilde{\a}(3^{-m}\mathbf{q}_0^{-1}x)\mathbf{q}_0, \qand \shom:=\overline{\lambda}\mathbf{q}_0\a_1\mathbf{q}_0
    \end{align*}
to complete the proof.
\end{proof}

For the Caccioppoli inequality that we present in Proposition \ref{prop21} of the subsequent chapter, we need the following elementary inequality. We will apply the following iteration estimate there for volume-normalized $L^2$ norms, which is an essential part of the proof. The proof of this assertion is a straightforward computation that can be found in~\cite[Appendix C]{akm}.

\begin{lemma}\label{lem12}
    Let $A,\xi\geq 0$ be non-negative constants and $\rho\colon \left[\nf{1}{2},1 \right)\to [0,\infty)$ be a non-negative function satisfying the condition that
    \begin{align*}
\sup_{t\in\left[\frac{1}{2},1 \right)}(1-t)^\xi \rho(t)<\infty.
    \end{align*}
    Suppose, additionally, that for every $x,y\in\left[\nf{1}{2},1 \right)$ with $y<x$, it holds that
    \begin{align*}
\rho(y)\leq \frac{1}{2}\rho(x)+(x-y)^{-\xi}A.
    \end{align*}
    Then, there exists some constant $C(\xi)<\infty$ that satisfies the estimate $\rho\left(\frac{1}{2} \right)\leq CA$.
\end{lemma}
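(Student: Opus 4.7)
The plan is to prove this by a standard geometric iteration, choosing a sequence of points in $[\tfrac12,1)$ that converges to $1$ at a controlled rate, and then applying the hypothesis repeatedly.

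First, I would dispose of the trivial case $\xi = 0$: there the hypothesis simply reads $\rho(y) \leq \tfrac12 \rho(x) + A$ for all $\tfrac12 \leq y < x < 1$, and picking any fixed $x$ gives $\rho(\tfrac12) \leq \tfrac12 \rho(x) + A$ directly. Since $\rho(x)$ is finite by the supremum hypothesis, one can also iterate, but it is not needed. For the remainder assume $\xi > 0$.

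Fix a parameter $\eta \in (0,1)$ to be chosen, and define the sequence $t_0 := \tfrac12$ and $t_{k+1} := t_k + \eta(1-t_k)$, so that
\begin{equation*}
1 - t_k = (1-\eta)^k (1-t_0) = \tfrac12 (1-\eta)^k,
\qquad
t_{k+1} - t_k = \tfrac{\eta}{2}(1-\eta)^k.
\end{equation*}
In particular $t_k \nearrow 1$ and $t_k \in [\tfrac12,1)$ for all $k$. Applying the hypothesis with $y = t_k$ and $x = t_{k+1}$ gives
\begin{equation*}
\rho(t_k) \leq \tfrac12 \rho(t_{k+1}) + (t_{k+1} - t_k)^{-\xi} A
= \tfrac12 \rho(t_{k+1}) + \bigl(\tfrac{2}{\eta}\bigr)^{\!\xi} (1-\eta)^{-k\xi} A.
\end{equation*}
Iterating this from $k=0$ up to $k=n-1$, one obtains
\begin{equation*}
\rho(\tfrac12) \leq 2^{-n}\rho(t_n) + \bigl(\tfrac{2}{\eta}\bigr)^{\!\xi} A \sum_{k=0}^{n-1} \bigl( \tfrac{(1-\eta)^{-\xi}}{2} \bigr)^{\!k}.
\end{equation*}

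Now I choose $\eta$ so that both residual pieces are controlled. The natural requirement is $2(1-\eta)^\xi > 1$; for example, taking $\eta := 1 - 2^{-1/(2\xi)}$ makes $2(1-\eta)^\xi = \sqrt{2}$, which depends only on $\xi$. With this choice the geometric series in the last display is bounded by a constant $C(\xi)$. For the leftover term, denote $M := \sup_{t \in [1/2,1)}(1-t)^\xi \rho(t) < \infty$, so that
\begin{equation*}
2^{-n}\rho(t_n) \leq 2^{-n} \frac{M}{(1-t_n)^\xi} = 2^{\xi} M \cdot \bigl(2(1-\eta)^\xi\bigr)^{-n} = 2^\xi M \cdot 2^{-n/2} \xrightarrow[n\to\infty]{} 0.
\end{equation*}
Passing to the limit $n\to\infty$ yields $\rho(\tfrac12) \leq C(\xi)\,A$, which is the claim.

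The only subtle point is the joint selection of $\eta$: it has to be small enough that the geometric sum converges, yet the constant in front of $A$ must depend only on $\xi$ (not on $M$ or $A$). This is handled by the explicit choice $\eta = 1 - 2^{-1/(2\xi)}$ above, which makes both the series bound and the decay rate $2^{-n/2}$ of the tail term quantitative in $\xi$ alone; the finite sup hypothesis on $\rho$ is only used qualitatively, to ensure that the tail vanishes in the limit.
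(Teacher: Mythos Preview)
Your argument is correct and is precisely the standard geometric iteration that underlies this lemma; the paper does not give its own proof but simply cites \cite[Lemma C.6]{akm}, whose argument is essentially the same as yours. One small slip: in the $\xi=0$ case you write that iteration ``is not needed,'' but the single application $\rho(\tfrac12)\leq \tfrac12\rho(x)+A$ still depends on $\rho(x)$, so you do need to iterate (or apply your main argument, which works verbatim for $\xi=0$ with any $\eta\in(0,1)$, since then the ratio is $\tfrac12$) to obtain a bound by $CA$ with $C$ independent of $\rho$.
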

\begin{proof}
    Ad verbatim from \cite[Lemma C.6]{akm}.
\end{proof}

\section{Caccioppoli inequality in high contrast}

This chapter focuses on the single most important tool we have in homogenization theory for estimating the energy quantities involved, that is, $L^2$ norms for the (adapted) gradients of $u$. Namely, this tool is called the \emph{Caccioppoli inequality}, and it is a very well-established elliptic regularity estimate, at least for elliptic equations of moderate contrast (see, e.g. \cite[Appendix~C]{akm}). In order to develop any higher-order regularity theory for high-contrast homogenization, it is thus imperative to have a version of this result in this context.

Let us briefly consider the classical uniformly elliptic situation with a single ellipticity constant $\Lambda>1$ first as a warm-up. Consequently, the (interior) homogeneous Caccioppoli inequality states that if $u\in H^1(B_r)$ solves the uniformly elliptic PDE $-\nabla\cdot \mathbf{a}\nabla u=0$ in $B_r$ for some $r>0$, then there exists a constant $C(d,\Lambda)<\infty$ so that
\begin{align}\label{unifcaccioppoli}
    \|\nabla u\|_{\underline{L}^2(B_{r/2})}\leq \frac{C}{r}\|u-(u)_{B_r}\|_{\underline{L}^2(B_r)}.
\end{align}
Here $B_r$ is an open ball of the standard Euclidean geometry and the norm $\|\cdot\|_{\underline{L}^2}$ refers to the volume-normalized $L^2$ norm in the uniformly elliptic setting. One key observation here is that the constant $C$ in (\ref{unifcaccioppoli}) is explicitly dependent on the ellipticity constant $\Lambda$.

We will encounter a similar observation in the uniformly elliptic case of two ellipticity constants satisfying the conditions of (\ref{unifellipconds}). There, we added another ellipticity parameter $\lambda\leq \Lambda$ and studied their moderate ratio $\Pi=\Lambda/\lambda$. Namely, we can write the aforementioned estimate of (\ref{unifcaccioppoli}) in this situation distributionally with smooth test functions $\psi\in C_c^\infty(B_r)$ so that
\begin{align*}
    \|\psi \nabla u\|_{\underline{L}^2(B_r)}^2\leq 4\Pi \|u\nabla \psi\|_{\underline{L}^2(B_r)}^2.
\end{align*}

The main problem with these simple and immensely useful estimates is that we cannot have a similar explicit dependence on the ellipticity ratio $\Pi$, as it could be arbitrarily large, in a high-contrast setting, since it would ruin our quantitative estimates. We would also not be able to utilize the results of \cite{AK.HC} efficiently to build the high-order regularity theory in that case. As mentioned before, to establish any kind of regularity theory for the solutions of (\ref{ellipeq}) in high contrast, we must have some Caccioppoli-type inequality that accommodates these demands in the high-contrast framework. The previous arXiv version of the paper \cite{AK.HC} portrayed two iterative versions of such estimates, one of which we will also state (in somewhat modified form) below to provide sufficient context for the reader. It is also utilized in the proof of Proposition \ref{prop21} (see \cite[Lemma 6.2]{AKv2} for more information).

\begin{lemma}[Iterative Caccioppoli inequality in high contrast]\label{lem21}
    Let $m\in\N$ and $n\in\Z$ with $n<m-2$ and $3^n\geq \mathcal{X_H}$. Then, for each $u\in\mathcal{A}(\diamondsuit_m)$, there is some constant $C(d,s,\gamma)<\infty$ satisfying the estimate
    \begin{align}\label{lem21result1}
\|\mathbf{s}^{1/2}\nabla u\|_{\underline{L}^2(\diamondsuit_{m-1})}\leq C(d)\left(1+C\left(\frac{\mathcal{X_H}}{3^n} \right)^{\nf{\theta}{2}}\right)\left(\overline{\lambda}^{1/2}3^{-m}\|u \|_{\underline{L}^2(\diamondsuit_{m})}+3^{-(m-n)}\|\mathbf{s}^{1/2}\nabla u \|_{\underline{L}^2(\diamondsuit_{m})}\right).
    \end{align}
    \begin{proof}
        Let us first point out the role of the parameter $n$ in the statement. Although we could simply select $n=m-3$, we wish to keep track of this parameter in terms of Proposition \ref{prop21} below. The claim itself follows trivially from \cite[Lemma 6.2]{AKv2} and Proposition~\ref{corcontrolen}. Namely, the statement of \cite[Lemma 6.2]{AKv2} is exactly the same as that of Lemma \ref{lem21}, but the error term of $1+C(\nf{\mathcal{X_H}}{3^m})^{\nf{\theta}{2}}$ is written there optimally by
        \begin{align}\label{lem21optimalerror}
            1+ \max_{y\in3^n\mathbb{L}_0\cap\diamondsuit_m}\frac{1}{2} \! \! \sum_{k=-\infty}^n \! \!  3^{\frac{1}{2}(k-n)}
\! \! \! \! \! \! \max_{z\in y+3^k\mathbb{L}_0 \cap \diamondsuit_n}\sup_{|e|\leq 1}\Bigl(J(\cdot,\shom^{-\nf12}e,\ahom^t\shom^{-\nf 12}e) +J^*(\cdot,\shom^{-\nf 12}e,\ahom \shom^{-\nf 12}e)\Bigr)^{\nf{1}{2}}(z+\diamondsuit_k),
        \end{align}
        whose second term satisfies the same upper bound of Proposition \ref{corcontrolen} as $\mathcal{E}_s(\diamondsuit_n;\a,\ahom)$. Thus, we can utilize Proposition \ref{corcontrolen} to deduce the desired claim.
    \end{proof}
\end{lemma}

With this Caccioppoli estimate, we can already consider the regularity aspects of the solutions. However, when comparing Lemma \ref{lem21} with the uniformly elliptic Caccioppoli inequality in (\ref{unifcaccioppoli}), we immediately notice that Lemma \ref{lem21} is not quite as elegant or simple as we would like it to be. That is, the right-hand side always contains the norm of the adapted gradient $\mathbf{s}^{1/2}\nabla u$ that is already present on the left-hand side as well. This iterative feature is very inconvenient for us because we would ideally like to have only the norm of~$u$ on the right-hand side. Consequently, this requires us to iterate with respect to $m$ while causing many unwanted complications to our computations by adding further summations and such. With higher-order regularity results, computations become even more complicated and robust, while having to work with an additional iteration argument throughout the process. For these reasons, it would indeed be extremely pleasant if such a non-iterative version of the Caccioppoli inequality existed.

The latest version of \cite{AK.HC} fixes this problem by presenting a non-iterative result described above. While being mathematically optimal in the general case, its right-hand side remains rather cumbersome with all of the ellipticity constants involved. That is why we can present the following simpler spiritual successor for it in our specific setting of Assumption \ref{d.renormellipt}. Its proof is also independent of the argument utilized in \cite[Proposition 2.5]{AK.HC}. The only similarity in the proof below is essentially the usage of the same iteration result of Lemma \ref{lem12}. Thus, the result showcased below is not a trivial consequence of their result and has merit of its own.

\begin{proposition}[Non-iterative Caccioppoli inequality in high contrast]\label{prop21}
    Assume that $\frac{1}{2}\leq r < 1$ and $3^M\geq \mathcal{X}$, for some $M\in\N$, with every $z\in 3^k\mathbb{L}_0\cap\diamondsuit_M$ satisfying
    \begin{align}\label{prop21assumption}
\mathbf{A}(z+\diamondsuit_k)\leq \left(1+ 3^{\gamma(M-k)}\left(\frac{\mathcal{X}}{3^M} \right)^\theta \right)\mathbf{\overline{A}}
    \end{align}
    as $k\in\Z\cap(-\infty,M]$. Suppose, furthermore, that
    \begin{align*}
\|\mathbf{s}^{1/2}\nabla u \|_{\underline{L}^2(r\diamondsuit_M)}<\infty
    \end{align*}
    for $u\in\mathcal{A}(\diamondsuit_M)$. Then, there exist an exponent $\kappa(d,\gamma)>0$ and a constant $C(\kappa)<\infty$ for which it holds that
    \begin{align}\label{prop21claim}
\|\mathbf{s}^{1/2}\nabla u \|_{\underline{L}^2(r\diamondsuit_M)}\leq C\overline{\lambda}^{1/2}3^{-M}(1-r)^{-\kappa}\|u\|_{\underline{L}^2(\diamondsuit_M)}.
    \end{align}
    \begin{proof}
    \emph{Step 1.} Let us first justify the following assertion. Namely, there exists a constant $C(\kappa)<\infty$ such that
    \begin{align}\label{prop21pfstep1goal}
\|\mathbf{s}^{1/2}\nabla u \|_{\underline{L}^2(r\diamondsuit_M)}\leq C\overline{\lambda}^{1/2}(R-r)^{-\kappa}3^{-M}\|u\|_{\underline{L}^2(\diamondsuit_M)}+\frac{1}{2} \|\mathbf{s}^{1/2}\nabla u \|_{\underline{L}^2(R\diamondsuit_M)}
    \end{align}
    for any fixed $R\in(r,1]$. We choose the largest $m\in\Z$ for which we have that $3^{m+2}\leq (R-r)3^M$. Then we may cover $r\diamondsuit_M$ with a family of cubes $\{z+\diamondsuit_{m}\}$ as $z\in 3^{m}\mathbb{L}_0\cap\diamondsuit_M$ such that $z+\diamondsuit_{m+1}\subset R\diamondsuit_M$ for every $z$. By plugging the assumption of (\ref{prop21assumption}) into the left-hand side below, we may deduce that
    \begin{align}\label{prop21pfbound1}
\left|\left(\overline{\mathbf{A}}^{-1/2}\left(\mathbf{A}(z+\diamondsuit_m)-\overline{\mathbf{A}} \right) \overline{\mathbf{A}}^{-1/2} \right)_+ \right|\leq 3^{\gamma(M-m)}\left(\frac{\mathcal{X}}{3^M} \right)^\theta.
    \end{align}
    Consider now the following alternative error quantity $\widetilde{\mathcal{E}}_s(y+\diamondsuit_n)$ defined for all $s\in(0,1]$, $n\in\N$, and $y\in\R^d$ by
    \begin{align*}
\widetilde{\mathcal{E}}_s(y+\diamondsuit_n):=\sum_{k=-\infty}^n 3^{s(k-n)} \max_{z\in y+3^k\mathbb{L}_0\cap\diamondsuit_n} \left|\left(\overline{\mathbf{A}}^{-1/2}(\mathbf{A}(z+\diamondsuit_k)-\overline{\mathbf{A}})\overline{\mathbf{A}}^{-1/2} \right)_+ \right|^{\nf{1}{2}}.
    \end{align*}
    From the minimal scale assumption of $3^M\geq\mathcal{X}$ and by setting that $n\leq M$, we can change the indexing within the aforementioned maximum so that
    \begin{align*}
\widetilde{\mathcal{E}}_s(y+\diamondsuit_n)\leq \sum_{k=-\infty}^n 3^{s(k-n)} \max_{z\in y+3^k\mathbb{L}_0\cap\diamondsuit_M} \left|\left(\overline{\mathbf{A}}^{-1/2}(\mathbf{A}(z+\diamondsuit_k)-\overline{\mathbf{A}})\overline{\mathbf{A}}^{-1/2} \right)_+ \right|^{\nf{1}{2}}.
    \end{align*}
    Furthermore, by the assumption of (\ref{prop21assumption}) and (\ref{prop21pfbound1}), we may then find a constant $C(d)<\infty$ for the following estimate that
    \begin{align*}
\max_{z\in y+3^k\mathbb{L}_0\cap\diamondsuit_M} \left|\left(\overline{\mathbf{A}}^{-1/2}(\mathbf{A}(z+\diamondsuit_k)-\overline{\mathbf{A}})\overline{\mathbf{A}}^{-1/2} \right)_+ \right|^{\nf{1}{2}}\leq C(d)3^{\frac{\gamma}{2}(M-k)}\left(\frac{\mathcal{X}}{3^M} \right)^{\theta/2}.
    \end{align*}
    This observation allows us to conclude, after some clever indexing, that
    \begin{align}\label{deftildecales}
\widetilde{\mathcal{E}}_s(y+\diamondsuit_n)\leq \sum_{k=0}^\infty 3^{-(s-\gamma/2)k} C3^{\frac{\gamma}{2}(M-n)}\left(\frac{\mathcal{X}}{3^M} \right)^{\theta/2}\leq C(d,s) 3^{\frac{\gamma}{2}(M-n)},
    \end{align}
    where we take $s>\gamma/2$ so that the geometric series in the middle converges. In order to recap our computations so far, we have with the assumptions $n\leq M$ and $y\in3^n\mathbb{L}_0\cap\diamondsuit_M$ that
    \begin{align*}
\widetilde{\mathcal{E}}_s(y+\diamondsuit_n)\leq C 3^{\frac{\gamma}{2}(M-n)}<\infty.
    \end{align*}
    
    We continue by utilizing Lemma \ref{lem21}, or more precisely, the respective formulation of \cite[Lemma 6.2]{AKv2} that has the optimal and scale-invariant error term presented in \eqref{lem21optimalerror}, which is also applicable for negative values of $m\in\Z$. This yields for a constant $C(d)>\infty$ that
    \begin{equation*}
    \begin{aligned}
\|\mathbf{s}^{1/2}\nabla u \|_{\underline{L}^2(z+\diamondsuit_m)}\leq C\left(1+3^{\frac{\gamma}{2}(M-n)} \right)\Big(&\overline{\lambda}^{1/2}3^{-(m+1)}\|u\|_{\underline{L}^2(z+\diamondsuit_{m+1})} 
+3^{-((m+1)-n)}\|\mathbf{s}^{1/2}\nabla u \|_{\underline{L}^2(z+\diamondsuit_{m+1})} \Big)
    \end{aligned}
    \end{equation*}
    after choosing that $s=\nf{1}{2}$ in order to match the homogenization errors above. Note that here $n<m-2$ and $z+\diamondsuit_{m+1}\subset R\diamondsuit_M$. We also utilized the fact that by \cite[Lemma 6.2]{AKv2}, it holds that the second term of \eqref{lem21optimalerror} is bounded above by $2\widetilde{\mathcal{E}}_s(y+\diamondsuit_n)$.
    Consequently, due to our construction, we can now confirm with a simple covering argument that
    \begin{equation*}
    \begin{aligned}
\|\mathbf{s}^{1/2}\nabla u \|_{\underline{L}^2(r\diamondsuit_M)}&\leq C\left(1+3^{\frac{\gamma}{2}(M-n)} \right)\overline{\lambda}^{1/2}3^{-m}\|u\|_{\underline{L}^2(R\diamondsuit_{M})} \\
&+C\left(1+3^{\frac{\gamma}{2}(M-n)} \right) 3^{-(m-n)}\|\mathbf{s}^{1/2}\nabla u \|_{\underline{L}^2(R\diamondsuit_{M})}.
    \end{aligned}
    \end{equation*}
    
    Furthermore, we choose the scale $n$ below so that the difference $m-n\geq 3$ is sufficiently large but finite. Since now $3^{m+2}\leq (R-r)3^M\leq 3^{m+3}$, we have that $3^{-m}\leq C(R-r)^{-1}3^{-M}$ and, thus, there exists a positive constant $C<\infty$ satisfying that
    \begin{align*}
3^{\frac{\gamma}{2}(M-m)}\leq C(R-r)^{-\nf{\gamma}{2}},
    \end{align*}
    where the right-hand side is at least one. Because $\nf{\gamma}{2}-1<0$, there is some small $\varepsilon>0$ after choosing $n$ properly such that
    \begin{align*}
3^{-(m-n)}\left(1+3^{\frac{\gamma}{2}(M-n)}\right)
\leq 3^{-(m-n)}+C(R-r)^{-\nf{\gamma}{2}}3^{(\nf{\gamma}{2}-1)(m-n)} \leq \varepsilon.
    \end{align*}
    Consequently, there exists now some positive exponent $\kappa>0$ by optimization for which
    \begin{align*}
\|\mathbf{s}^{1/2}\nabla u \|_{\underline{L}^2(r\diamondsuit_M)}\leq C(\kappa)\overline{\lambda}^{1/2}(R-r)^{-\kappa}3^{-M}\|u\|_{\underline{L}^2(\diamondsuit_M)}+C\varepsilon \|\mathbf{s}^{1/2}\nabla u \|_{\underline{L}^2(R\diamondsuit_M)}.
    \end{align*}
    We can certainly choose $\varepsilon>0$ to be sufficiently small so that $C\varepsilon\leq \nf{1}{2}$ above, which implies the claim in (\ref{prop21pfstep1goal}).
    
    \emph{Step 2.} Now, we will conclude the proof by utilizing Lemma \ref{lem12}. Namely, we define the function $\rho\colon [\nf{1}{2},1)\to [0,\infty)$ by
    \begin{align*}
\rho(r):=\|\mathbf{s}^{1/2}\nabla u \|_{\underline{L}^2(r\diamondsuit_M)}.
    \end{align*}
    We choose the non-negative quantities such that $\xi:=\kappa>0$ and
    \begin{align*}
A:=C\overline{\lambda}^{1/2}3^{-M}\|u\|_{\underline{L}^2(\diamondsuit_M)}\geq 0,
    \end{align*}
    where the constant $C(\kappa)<\infty$ is the same as in (\ref{prop21pfstep1goal}). Now, it is apparent that
    \begin{align*}
\sup_{t\in[\nf{1}{2},1)}(1-t)^\kappa\rho(t)<\infty.
    \end{align*}
    Furthermore, by rewriting the result of the previous step with this language, we have the estimate that
    \begin{align*}
\rho(r)\leq \frac{1}{2}\rho(R)+(R-r)^{-\xi}A
    \end{align*}
    for every $\nf{1}{2}\leq r<R\leq 1$. Consequently, since all assumptions obviously hold, we can then utilize Lemma \ref{lem12}, which states that there exists a constant $C(\kappa)<\infty$ for which
    \begin{align*}
\rho\left(\frac{1}{2} \right)\leq CA.
    \end{align*}
    The desired claim follows after iterating with respect to $r$.
    \end{proof}
\end{proposition}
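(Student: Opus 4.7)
The plan is to reduce the claim to the one-dimensional iteration Lemma~\ref{lem12} by establishing, for every $\tfrac12 \leq r < R \leq 1$, a hole-filling type estimate of the form
\begin{equation*}
\rho(r) \leq \tfrac{1}{2}\rho(R) + C(R-r)^{-\kappa} A,
\end{equation*}
where I set $\rho(r) := \|\mathbf{s}^{1/2}\nabla u\|_{\underline{L}^2(r\diamondsuit_M)}$ and $A := \overline{\lambda}^{1/2} 3^{-M}\|u\|_{\underline{L}^2(\diamondsuit_M)}$. Granting this, the hypothesis $\|\mathbf{s}^{1/2}\nabla u\|_{\underline{L}^2(\diamondsuit_M)}<\infty$ ensures that $(1-t)^\kappa \rho(t)$ is bounded on $[\tfrac12,1)$, so Lemma~\ref{lem12} with $\xi=\kappa$ yields $\rho(\tfrac12)\leq CA$. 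A routine rescaling (applying the same hole-filling inequality with $\tfrac12$ replaced by any $r\in[\tfrac12,1)$, which amounts to restricting $\rho$ to $[r,1)$ and translating) produces the full $(1-r)^{-\kappa}$ dependence in (\ref{prop21claim}).

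To derive the hole-filling inequality I would pick the largest $m\in\Z$ with $3^{m+1}\leq (R-r)3^M$, so that cubes $\{z+\diamondsuit_{m+1}\}$, $z\in 3^m\mathbb{L}_0$, which meet $r\diamondsuit_M$ still lie inside $R\diamondsuit_M$. On each such cube I apply the iterative Caccioppoli inequality, Lemma~\ref{lem21} (or rather its sharper form \cite[Lemma 6.2 in v2]{AK.HC} with the error term (\ref{lem21optimalerror})), at an auxiliary scale $n<m-2$. The residual homogenization error $\widetilde{\mathcal{E}}_s(y+\diamondsuit_n)$ is controlled by the axiomatic assumption (\ref{prop21assumption}): plugging in the bound yields the term-by-term estimate $\bigl|\bigl(\overline{\mathbf{A}}^{-1/2}(\mathbf{A}(z+\diamondsuit_k)-\overline{\mathbf{A}})\overline{\mathbf{A}}^{-1/2}\bigr)_+\bigr|^{1/2}\leq C 3^{\gamma(M-k)/2}(\mathcal{X}/3^M)^{\theta/2}$, and summing a geometric series in $k$ (convergent as soon as $s>\gamma/2$, with $s=\tfrac12$ being the natural choice) gives $\widetilde{\mathcal{E}}_s(y+\diamondsuit_n)\leq C 3^{\gamma(M-n)/2}$. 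A covering argument to reassemble the estimates over $z\in 3^m\mathbb{L}_0\cap r\diamondsuit_M$ then leaves me with
\begin{equation*}
\rho(r)^2 \leq C\bigl(1+3^{\gamma(M-n)}\bigr)\Bigl(\overline{\lambda}\cdot 3^{-2m}\|u\|_{\underline{L}^2(R\diamondsuit_M)}^2+3^{-2(m-n)}\rho(R)^2\Bigr).
\end{equation*}

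The central quantitative step, which I expect to be the main technical obstacle, is the balancing of the three scales $M>m>n$. The choice of $m$ already gives $3^{\gamma(M-m)}\leq C(R-r)^{-\gamma}$, controlling the coefficient of $\|u\|_{\underline{L}^2}^2$. To absorb the gradient term with prefactor $\tfrac12$ I need $(1+3^{\gamma(M-n)})\cdot 3^{-2(m-n)}$ to be as small as desired; since $\gamma-2<0$, the dominant piece behaves like $3^{\gamma(M-m)+(\gamma-2)(m-n)}$, which I can drive below any prescribed $\varepsilon$ by enlarging $m-n$. Tracking how this enlargement propagates through the $\|u\|_{\underline{L}^2}^2$-coefficient produces the polynomial factor $(R-r)^{-\kappa}$ with an exponent $\kappa(d,\gamma)>0$ obtained by optimization. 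Fixing $\varepsilon$ so that $C\varepsilon\leq\tfrac12$ after taking square roots closes the hole-filling inequality, and Lemma~\ref{lem12} then concludes the proof.
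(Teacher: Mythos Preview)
Your proposal is correct and follows essentially the same route as the paper: you set up the hole-filling inequality via the iterative Caccioppoli estimate on cubes of the largest scale $m$ with $3^{m+1}\le (R-r)3^M$, control the residual error $\widetilde{\mathcal{E}}_s$ by summing the geometric series coming from assumption~(\ref{prop21assumption}) with $s=\tfrac12>\gamma/2$, balance the scales $M>m>n$ exactly as the paper does, and close with Lemma~\ref{lem12}. The only cosmetic difference is your phrasing of the final step (rescaling to recover the $(1-r)^{-\kappa}$ factor) versus the paper's ``iterating with respect to $r$''.
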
 

Before proceeding to the final chapter of this article, where we consider the higher-order regularity theory, let us still point out a few complementary remarks regarding the non-iterative Caccioppoli inequality. First of all, it is a reasonable question whether the bound in the assumption of (\ref{prop21assumption}) is sensible at all and how restrictive this requirement is exactly. The answer to this question can be found in \cite[Corollary 4.3]{AK.HC} and it turns out to be a very natural and non-rigid assumption given the underlying presumption (P2). As we already noticed in Proposition \ref{p.Psimplyrenormellipt}, this bound is a direct implication of (P2). The result of Proposition \ref{prop21} implies, in turn, the application shown in Proposition \ref{introCI}, since it always holds that $\diamondsuit_{m-1}\subset r\diamondsuit_m$.

Secondly, when comparing the statement of Proposition \ref{prop21} to its iterative counterpart in Lemma \ref{lem21}, it is apparent that the estimate of (\ref{prop21claim}) is a more convenient and powerful result than the one in (\ref{lem21result1}). This is simply due to the fact that Proposition \ref{prop21} does not contain the (adapted) energy norm on the right-hand side (as all Caccioppoli inequalities should ideally not), whereas Lemma \ref{lem21} does. Consequently, when applying the result of Lemma \ref{lem21}, one needs to iterate these estimates endlessly across greater and greater scales to achieve any sensible quantitative bounds. In contrast to this, we do not need to do any of that with Proposition \ref{prop21}, which provides a direct estimate for the $L^2$ norm of $u$. It is needless to say that this result will help us a lot within this paper to build the high-order regularity theory, and also, most certainly, in the future as well.

It should also be pointed out that a coarse-grained version similar to Proposition \ref{prop21} has already been formulated and proved in \cite[Proposition 5.24]{ak} for the uniformly elliptic regime of which \cite[Proposition 2.5]{AK.HC} is generalized to the high-contrast setting. However, as is customary for the uniformly elliptic homogenization theory, the ellipticity ratio is present on its right-hand side there. For the high-contrast framework, we must be much more careful, since these ratios can be arbitrarily large. Otherwise, these propositions share much in common, at least on the qualitative side. One drawback of this particular high-contrast version of the Caccioppoli inequality is the increased randomness caused by the presence of the random minimal scale $\mathcal{X}$ as the statement of \cite[Proposition 2.5]{AK.HC} does not depend on it, but we can usually live with this trade-off.

Now that we have established our version of the Caccioppoli inequality, we are ready to proceed to the higher-order regularity theory, which relies heavily on this result. Let us conclude this chapter by recording a direct corollary of Proposition \ref{prop21} that utilizes the adapted balls defined in (\ref{defadaptedballs}) instead of the adapted cubes as above. This alternate formulation is of some use to us in the proof of Theorem \ref{thmhighreg}. The proof of this claim is essentially the same as above and, thus, we will be omitting it. The only changes would be the obvious ones by replacing the cubes with the adapted balls and altering the different quantities accordingly.
\begin{corollary}\label{cor21}
    \textnormal{(Non-iterative Caccioppoli inequality for adapted balls)} Assume that $0< \nicefrac{R}{2}\leq r < R$ and $R\geq \mathcal{X}$ with every $z\in 3^k\mathbb{L}_0\cap B_R$ satisfying
    \begin{align*}
\mathbf{A}(z+\diamondsuit_k)\leq \left(1+\left(\frac{R}{3^k} \right)^{\gamma}\left(\frac{\mathcal{X}}{R} \right)^\theta \right)\mathbf{\overline{A}}
    \end{align*}
    as $k\in\Z$ and $3^k\leq R$. Suppose, furthermore, that
    \begin{align*}
\|\mathbf{s}^{1/2}\nabla u \|_{\underline{L}^2(B_r)}<\infty
    \end{align*}
    for $u\in\mathcal{A}(B_R)$. Then, there exist an exponent $\kappa(d,\gamma)>0$ and a constant $C(\kappa)<\infty$ for which it holds that
    \begin{align*}
\|\mathbf{s}^{1/2}\nabla u \|_{\underline{L}^2(B_r)}\leq C\overline{\lambda}^{1/2}R^{-1}(1-r/R)^{-\kappa}\|u\|_{\underline{L}^2(B_R)}.
    \end{align*}
\end{corollary}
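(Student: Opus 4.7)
The plan is to mirror the two-step proof of Proposition \ref{prop21}, replacing the adapted cubes $\diamondsuit_M$ by the adapted balls $B_R$ throughout and letting the rescaled radius $t = \rho/R$ play the role that the dilation factor $r \in [\nf12, 1)$ did there. The hypothesis on $\mathbf{A}(z + \diamondsuit_k)$ is the natural ball-version analogue of (\ref{prop21assumption}) under the identification $3^M \leftrightarrow R$, so the key geometric-series estimates transfer directly.

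Step 1: I would first establish the one-step absorption estimate
\begin{equation*}
\|\mathbf{s}^{1/2}\nabla u\|_{\underline{L}^2(B_{r'})} \leq C \overline{\lambda}^{1/2} R^{-1}\left(\frac{R}{R'-r'}\right)^{\!\kappa} \|u\|_{\underline{L}^2(B_R)} + \tfrac{1}{2}\|\mathbf{s}^{1/2}\nabla u\|_{\underline{L}^2(B_{R'})}
\end{equation*}
for every $0 < r' < R' \leq R$. Pick the largest $m \in \Z$ with $3^{m+1} \leq R' - r'$ and apply Lemma \ref{lem21} in its sharp form (\cite[Lemma 6.2 in v2]{AK.HC}) at scale $m$ to each $z \in 3^m \mathbb{L}_0$ with $z + \diamondsuit_{m+1} \subset B_{R'}$; such cubes are contained in $B_R$ by construction, so the matrix hypothesis bounds the auxiliary error $\widetilde{\mathcal{E}}_s(y + \diamondsuit_n) \leq C (R/3^n)^{\gamma/2}$ by the same geometric-series computation that produced (\ref{deftildecales}). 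A finite-overlap cover of the annular region $B_{R'}\setminus B_{r'}$ by adapted cubes of scale $m$ aggregates the cube-level inequalities. Since $3^m \simeq R' - r'$ and $\gamma - 2 < 0$, choosing an auxiliary scale $n < m-2$ sufficiently small forces the prefactor $(1 + (R/3^n)^{\gamma})\, 3^{-2(m-n)}$ of the energy term below $\nf14$, while the prefactor of the $L^2$-data term acquires only a negative power of $(R'-r')/R$, yielding the absorption estimate after optimization.

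Step 2: Following the scheme of Step 2 in the proof of Proposition \ref{prop21}, I apply Lemma \ref{lem12} to the function $\rho(t) := \|\mathbf{s}^{1/2}\nabla u\|_{\underline{L}^2(B_{tR})}$ on $[r/R, 1)$, with $\xi := \kappa$ and $A := C \overline{\lambda}^{1/2} R^{-1}\|u\|_{\underline{L}^2(B_R)}$. The Step 1 estimate evaluated at radii $r' = t_1 R$ and $R' = t_2 R$ is precisely the recursion hypothesis $\rho(t_1) \leq \tfrac{1}{2}\rho(t_2) + (t_2 - t_1)^{-\xi} A$ required by the lemma. Applying the lemma on the interval $[r/R, 1)$ and iterating geometrically in $t$ exactly as at the end of the proof of Proposition \ref{prop21} yields $\rho(r/R) \leq C(1 - r/R)^{-\kappa} A$, which is the claim.

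The main technical point to verify carefully is the covering step. Unlike the cube case, where $r\diamondsuit_M \setminus r'\diamondsuit_M$ is tiled cleanly by cubes of any fine enough scale, the annular region $B_{R'}\setminus B_{r'}$ must be covered by adapted cubes of scale $m$ with controlled overlap while keeping $m$ compatible with the admissible range of auxiliary scales $n < m - 2$. Because the adapted geometry only differs from the Euclidean one by the universal matrix $\mathbf{q}_0$ defined in (\ref{defqzero.proof}), this is routine, but one must verify that the covering constants hide no dependence on the ellipticity ratio $\Pi_{\overline{\mathbf{s}}}$.
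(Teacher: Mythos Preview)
Your proposal is correct and takes essentially the same approach as the paper: the paper omits the proof entirely, stating that it is ``essentially the same as above'' (i.e., the proof of Proposition~\ref{prop21}) with the obvious replacement of adapted cubes by adapted balls, which is precisely the two-step absorption-plus-iteration scheme you outline. Your caveat about the covering constants is well taken but harmless here, since both the adapted balls $B_R$ and the adapted cubes $\diamondsuit_m$ are images under the same matrix $\mathbf{q}_0$ of Euclidean balls and cubes, so covering one by the other introduces only a dimensional constant and no dependence on $\Pi_{\overline{\mathbf{s}}}$.
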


\section{High-order regularity theory in high contrast}

In this final chapter of the article, we will consider the theory of high-order regularity for the elliptic PDE of (\ref{ellipeq}) in the setting of high-contrast homogenization. This topic has yet to be studied extensively since it was left out of the paper \cite{AK.HC}. The regularity theory of elliptic PDEs studies the aspects of regularity (such as smoothness or integrability, etc.) of solutions $u$ in the equation~(\ref{ellipeq}). The classical high-order regularity theory for uniformly elliptic equations is well established by now (see \cite{ak} or \cite{akm} for an extensive overview), starting already from the 1980s with the works of \cite{avellaneda} and \cite{lin}. Since then and even before, there have been many classical regularity results, including the De Giorgi-Nash, Meyers, and Schauder estimates, as well as the Calderón-Zygmund estimate and, of course, the Caccioppoli estimate.

The construction of such an extensive theory for high-contrast homogenization has been widely open, whereas some similar regularity results are presented in \cite{AK.HC} and \cite{asmart}. Even more so, the theory of high-order regularity has never been studied before in the high-contrast context of this article. In some sense, we try to generalize or modify the already existing arguments for uniformly elliptic equations to prove the same results in the high-contrast framework. However, this is not as straightforward as it may seem at first glance because these arguments often utilize uniform ellipticity extensively, and thus it often requires completely different arguments and angles of approach.

In the article \cite{AK.HC}, quite a lot of effort was paid to ensure that the quantitative estimates obtained there would satisfy even the worst randomness imaginable. Although this was required for mathematical completeness, it led to very complicated and cumbersome error terms. This was due to the unpredictable setting in which the high-contrast objects live while trying to quantify the error caused by the homogenization process. In the framework of this article, we take a step back from the worst randomness and set some bounds for it in the form of Assumption \ref{d.renormellipt}. This provides us with the means to control these error terms, as documented in Proposition \ref{corcontrolen}. This is, after all, a reasonable computational simplification, which does not really affect the analysis too much.

The following theorem is the main result of this article. Note that the statement (along with its proof as well) follows the overall structure of \cite[Theorem 6.12]{ak} or, alternatively, \cite[Theorem 3.8]{akm}. Of course, there are many modifications caused by the high-contrast setting within the statement and its proof. This is especially apparent within the arguments that we have utilized for Steps 2 and 3 in the proof. In the following theorem, the result is stated for a fixed regularity parameter $k\in\N$ referring to the induction argument used in the proof.

\begin{theorem}[Large-scale $C^{k,1}$ regularity in high contrast]\label{thmhighreg}
    Suppose that Assumption \ref{d.renormellipt} holds. Then, the following assertions hold for every fixed $k\in\N$ and $x\in\diamondsuit_1$.
    \begin{itemize}
\item[(i)$_k$] For every $u\in\mathcal{A}_k$, $s\in(\nf{\gamma}{2},\nf12)$, and $n\in\N$, there exist some $\overline{u}\in\overline{\mathcal{A}}_k$ and a constant $C(d,k,s,\gamma)<\infty$ given that $3^n\geq \mathcal{X_H}$ for which
\begin{align}\label{thmhighregclaim1}
    3^{-n}\overline{\lambda}^{1/2}\|u-\overline{u} \|_{\underline{L}^2(\diamondsuit_n)}+3^{-ns}\left[\overline{\mathbf{A}}^{1/2}\begin{bmatrix}
\nabla u-\nabla \overline{u} \\ \mathbf{a}\nabla u-\ahom\nabla\overline{u}
    \end{bmatrix} \right]_{\Hminusul(\diamondsuit_n)}\leq C
    \left(\frac{\mathcal{X_H}}{3^n} \right)^{\nf{\theta}{2}} \|\overline{\mathbf{s}}^{1/2}\nabla \overline{u} \|_{\underline{L}^2(\diamondsuit_n)}.
\end{align}
\item[(ii)$_k$] For every $\overline{u}\in\overline{\mathcal{A}}_k$, $s\in(\nf{\gamma}{2},\nf12)$, and $n\in\N$, there exist some $u\in\mathcal{A}_k$ and a constant $C(d,k,s,\gamma)<\infty$ given that $3^n\geq \mathcal{X_H}$ for which
\begin{align}\label{thmhighregclaim2}
    3^{-n}\overline{\lambda}^{1/2}\|u-\overline{u} \|_{\underline{L}^2(\diamondsuit_n)}+3^{-ns}\left[\overline{\mathbf{A}}^{1/2}\begin{bmatrix}
\nabla u-\nabla \overline{u} \\ \mathbf{a}\nabla u-\ahom\nabla\overline{u}
    \end{bmatrix} \right]_{\Hminusul(\diamondsuit_n)}\leq C
    \left(\frac{\mathcal{X_H}}{3^n} \right)^{\nf{\theta}{2}} \|\overline{\mathbf{s}}^{1/2}\nabla \overline{u} \|_{\underline{L}^2(\diamondsuit_n)}.
\end{align}
\item[(iii)$_k$] There exists a constant $C(d,k,s,\gamma)<\infty$ such that, for all $R\geq \mathcal{X_H}$ and~$u\in\mathcal{A}(B_R(x))$, the following claim holds. Namely, for each $r\in[\mathcal{X_H},R]$, there exists some $\phi\in\mathcal{A}_k$ so that
\begin{align}\label{thmhighregclaim3}
    \|\mathbf{s}^{1/2}\nabla(u-\phi) \|_{\underline{L}^2(B_r(x))}\leq C\left(\frac{r}{R} \right)^k \|\mathbf{s}^{1/2}\nabla u \|_{\underline{L}^2(B_R(x))}.
\end{align}
    \end{itemize}
    Especially, we have almost surely with respect to $\mathbb{P}$ that for all $k\in\N$, it holds that
    \begin{align}\label{thmhighregdims}
\dim(\mathcal{A}_k)=\dim(\overline{\mathcal{A}}_k)=\binom{d+k-1}{k}+\binom{d+k-2}{k-1}.
    \end{align}
    \begin{proof}
The structure of this proof is rather complicated, so let us begin by describing it first. We proceed with an induction loop on $k\in\N$ for which the initial step $k=0$ is checked in Step 1. For the subsequent steps, we prove the induction step from $k-1$ to $k$ by justifying the following implications
\begin{align*}
    &\textnormal{(i)$_{k-1}$, (ii)$_{k-1}$, and (iii)$_{k-1}^{'}$}\Longrightarrow \textnormal{(ii)$_{k}$}, \\
    &\textnormal{(i)$_{k-1}$ and (ii)$_{k}$}\Longrightarrow \textnormal{(i)$_{k}$}, \\
    &\textnormal{(i)$_{k}$ and (ii)$_{k}$}\Longrightarrow \textnormal{(iii)$_{k}^{'}$}.
\end{align*}
In the above, we include the slightly weaker claim of (iii)$_{k}^{'}$ compared to (\ref{thmhighregclaim3}), which we will formulate next. That is, the statement of (iii)$_k$ remains unchanged otherwise, but the estimate in (\ref{thmhighregclaim3}) is modified to
\begin{align*}
    \|\mathbf{s}^{1/2}\nabla(u-\phi) \|_{\underline{L}^2(B_r(x))}\leq C\left(\frac{r}{R} \right)^{k-\rho} \|\mathbf{s}^{1/2}\nabla u \|_{\underline{L}^2(B_R(x))}
\end{align*}
as $\rho\in(0,\nf{\theta}{2})$. Then, in the fifth step of the proof, we show that the claims of (i)$_{k+1}$, (ii)$_{k+1}$, and~(iii)$_{k+1}^{'}$ imply the upgraded form of (iii)$_{k}$. The final sixth step repeats the argument from \cite[Theorem 3.8]{akm} for (\ref{thmhighregdims}), which also concludes the proof of the entire theorem.

\emph{Step 1. Initial step of $k=0$.} The first thing to note, for all of the steps in the proof, is that the random scale $\mathcal{X_H}$ always exists by Assumption \ref{d.renormellipt} with the desired integrability properties. For~(ii)$_0$, it suffices to notice that $\overline{u}\in\overline{\mathcal{A}}_0$ is $\overline{\mathbf{a}}$-harmonic and therefore, by Liouville's theorem, a constant function as a harmonic polynomial of degree zero. This implies that $\overline{\mathcal{A}}_0\subset \mathcal{A}_0$,
which in turn means that~(ii)$_0$ holds trivially by choosing $u=\overline{u}$. For (i)$_0$, we note that actually, even $\overline{\mathcal{A}}_0 = \mathcal{A}_0$ holds since Proposition \ref{introCI} yields for every $n\in\N$ with $3^n\geq \mathcal{X_H}$ that
\begin{align*}
    \|\s^{1/2}\nabla u\|_{\underline{L}^2(\diamondsuit_n)}\leq C\overline{\lambda}^{1/2}3^{-(n+1)}\|u\|_{\underline{L}^2(\diamondsuit_{n+1})}\leq C\overline{\lambda}^{1/2}\limsup_{m\to\infty}3^{-m}\|u\|_{\underline{L}^2(\diamondsuit_m)}
\end{align*}
for some $m\geq n+1$. Now, the rightmost side above tends to zero, as $u\in\mathcal{A}_0$, which means that $u$ is a constant almost everywhere. This implies that $\overline{\mathcal{A}}_0 = \mathcal{A}_0=\{\textnormal{a.e. constant functions} \}$, and we can conclude (i)$_0$ by choosing yet again that $u=\overline{u}$. Lastly, for (iii)$_0$, we simply utilize the triangle inequality for any constant function $\phi\in\mathcal{A}_0$, which instantly provides the claim. Consequently, we have now verified that the initial step $k=0$ holds in our induction argument.

\emph{Step 2. Proof of (ii)$_k$.} For our induction assumption, we suppose that the assumptions (i)$_{k-1}$, (ii)$_{k-1}$, and (iii)$_{k-1}^{'}$ hold. Assumption \ref{d.renormellipt} provides us with a positive exponent $\theta>0$, which we fix now universally even for~(iii)$_{k}^{'}$. The exponent of $\nf\theta2$ within the claim arises from the right-hand side of Proposition~\ref{corcontrolen}.

From the harmonic approximation result in Proposition \ref{propharmappr}, we have for any harmonic polynomial $\overline{u}\in\overline{\mathcal{A}}_k$ that there exist a function $u_n\in\mathcal{A}(\diamondsuit_n)$ and a constant $C(d,s,\gamma)<\infty$ satisfying
\begin{equation}\label{thm21pfstep2harmappr}
\begin{aligned}
    3^{-n}\overline{\lambda}^{1/2}\|u_n-\overline{u}\|_{\underline{L}^2(\diamondsuit_n)}+3^{-ns}\left[\overline{\mathbf{A}}^{1/2}\begin{bmatrix}
\nabla(u_n-\overline{u}) \\ \mathbf{a}\nabla u_n-\overline{\mathbf{a}}\nabla\overline{u}
    \end{bmatrix}  \right]_{\Hminusul(\diamondsuit_n)} \leq C\left(\frac{\mathcal{X_H}}{3^n} \right)^{\nf{\theta}{2}}\|\mathbf{s}^{1/2}\nabla u_n \|_{\underline{L}^2(\diamondsuit_n)}.
\end{aligned}
\end{equation}
We may utilize the result of \cite[Proposition 5.3]{AK.HC} by setting there that $g=\overline{u}$ on $\partial\diamondsuit_n$ as a harmonic polynomial. This result states (after utilizing Proposition \ref{corcontrolen}, a reabsorption in the second inequality, and norm equivalence) that
\begin{align}\label{fromutouhom}
    C\left(\frac{\mathcal{X_H}}{3^n} \right)^{\nf{\theta}{2}}\|\mathbf{s}^{1/2}\nabla u_n \|_{\underline{L}^2(\diamondsuit_n)}\leq C\left(\frac{\mathcal{X_H}}{3^n} \right)^{\nf{\theta}{2}}\left(1+\mathcal{E}_s(\diamondsuit_n; \a, \ahom) \right)\|\nabla \overline{u}\|_{\underline{H}^s(\diamondsuit_n)} \leq C\left(\frac{\mathcal{X_H}}{3^n} \right)^{\nf{\theta}{2}}\|\shom^{1/2}\nabla\overline{u}\|_{\underline{L}^2(\diamondsuit_n)}.
\end{align}
The same argument as above also justifies the minimum on the right-hand side of Theorem \ref{thm13}.
Consequently, it holds (for a larger constant $C(d,s,\gamma)<\infty$) that
\begin{align}\label{thm21phstep2ptgoal}
    3^{-n}\overline{\lambda}^{1/2}\|u_n-\overline{u}\|_{\underline{L}^2(\diamondsuit_n)}+3^{-ns}\left[\begin{bmatrix}
\overline{\mathbf{s}}^{1/2}\nabla(u_n-\overline{u}) \\ \overline{\mathbf{s}}^{-1/2}(\mathbf{a}\nabla u_n-\overline{\mathbf{a}}\nabla\overline{u})
    \end{bmatrix}  \right]_{\Hminusul(\diamondsuit_n)}\leq C\left(\frac{\mathcal{X_H}}{3^n} \right)^{\nf{\theta}{2}}
    \|\overline{\mathbf{s}}^{1/2}\nabla \overline{u} \|_{\underline{L}^2(\diamondsuit_n)}.
\end{align}
Now, it remains to upgrade the aforementioned functions $u_n$ from $\mathcal{A}(\diamondsuit_n)$ to $\mathcal{A}_k$ by showing that~$w_n$ and~$\phi_n$ below are close enough to each other. Let us thus define the difference function $w_n:=u_{n+1}-u_n\in\mathcal{A}(\diamondsuit_{n})$ and its corresponding corrector $\phi_n\in\mathcal{A}_{k-1}$ that emerges from (iii)$_{k-1}^{'}$.

\emph{Step 2.1. Closeness of $w_n$ and $\phi_n$.} In this step, we will study the closeness of $w_n$ and $\phi_n$ more thoroughly, as explained above. Continuing from before, we wish to utilize the statement of~(iii)$_{k-1}^{'}$, which provides for the scale $3^l\in[\mathcal{X_H},3^{n-1}]$ that 
\begin{align}\label{thm21pfstep2estim1}
    \|\mathbf{s}^{1/2}\nabla(w_n-\phi_n)\|_{\underline{L}^2(\diamondsuit_l)}\leq C3^{-((n-1)-l)(k-1-\rho)}\|\mathbf{s}^{1/2}\nabla w_n\|_{\underline{L}^2(\diamondsuit_{n-1})}.
\end{align}
Then, we can reason with the non-iterative Caccioppoli inequality of Proposition \ref{prop21} that
\begin{align}\label{thm21pfstep21nic}
    \|\mathbf{s}^{1/2}\nabla w_n\|_{\underline{L}^2(\diamondsuit_{n-1})}\leq C3^{-n}\overline{\lambda}^{1/2}\|w_n-(w_n)_{\diamondsuit_n}\|_{\underline{L}^2(\diamondsuit_{n})}.
\end{align}
Here and further along the proof, we denote the integral average of $w_n$ in $\diamondsuit_n$ by $(w_n)_{\diamondsuit_n}$. For the right-hand side above, we recall our definition for $w_n$ and apply the triangle inequality to establish that
\begin{align*}
    \|w_n-(w_n)_{\diamondsuit_{n}}\|_{\underline{L}^2(\diamondsuit_{n})}\leq \|u_n-\overline{u}-(u_n-\overline{u})_{\diamondsuit_n}\|_{\underline{L}^2(\diamondsuit_{n})}+\|u_{n+1}-\overline{u}-(u_{n+1}-\overline{u})_{\diamondsuit_n}\|_{\underline{L}^2(\diamondsuit_{n})}.
\end{align*}
Thus, the estimate in (\ref{thm21phstep2ptgoal}) allows us to conclude that
\begin{align*}
    \|\mathbf{s}^{1/2}\nabla w_n\|_{\underline{L}^2(\diamondsuit_{n-1})}\leq 3^{-n}\overline{\lambda}^{1/2}\|w_n-(w_n)_{\diamondsuit_n}\|_{\underline{L}^2(\diamondsuit_{n})}\leq C\left(\frac{\mathcal{X_H}}{3^n} \right)^{\nf{\theta}{2}}\|\overline{\mathbf{s}}^{1/2}\nabla \overline{u} \|_{\underline{L}^2(\diamondsuit_{n})}.
\end{align*}
The only remaining problem here is that we cannot directly estimate the (rescaled) $L^2$ norm of~$\s^{1/2}\nabla w_n$ in $\diamondsuit_n$ while remaining on the same scale. However, luckily for us, this can be simply tackled by changing the scale and increasing the constants in the end. Namely, starting from (\ref{thm21pfstep21nic}), we have concluded from our reasoning above that
\begin{align}\label{thm21pfstep2crudebd}
    \|\mathbf{s}^{1/2}\nabla w_n\|_{\underline{L}^2(\diamondsuit_{n-1})}
    \leq C\left(\frac{\mathcal{X_H}}{3^n} \right)^{\nf{\theta}{2}}
    \|\mathbf{\overline{s}}^{1/2}\nabla \overline{u}\|_{\underline{L}^2(\diamondsuit_{n})}.
\end{align}

Reverting back to the task at hand, we will consider the following partial summations
\begin{align*}
    v_m:=u_m-\sum_{j=1}^{m-1} \phi_j=u_{n+1}+\sum_{j=n+1}^{m-1} (u_{j+1}-u_j-\phi_j)-\sum_{j=1}^{n} \phi_j.
\end{align*}
Let us study the Sobolev seminorms first, for which the triangle inequality yields that 
\begin{equation}\label{thm21pfstep2triineqsuper}
\begin{aligned}
    \left[\overline{\mathbf{s}}^{1/2}\nabla (v_m-\overline{u})\right]_{\Hminusul(\diamondsuit_n)}&\leq \left[\overline{\mathbf{s}}^{1/2}\nabla(u_{n+1}-\overline{u})\right]_{\Hminusul(\diamondsuit_n)}+\sum_{j=n+1}^{m-1}\left[\overline{\mathbf{s}}^{1/2}\nabla (u_{j+1}-u_j-\phi_j)\right]_{\Hminusul(\diamondsuit_n)} \\
    &+\sum_{j=1}^{n} \left[\overline{\mathbf{s}}^{1/2} \nabla\phi_j\right]_{\Hminusul(\diamondsuit_n)}.
\end{aligned}
\end{equation}
For the first term on the right-hand side above, the harmonic approximation presented in (\ref{thm21phstep2ptgoal}) gives the desired result immediately (that being the right-hand side of (\ref{thm21phstep2ptgoal})).
Since $w_j-\phi_j\in\mathcal{A}(\diamondsuit_j)\subset \mathcal{A}(\diamondsuit_{n})$ and $j\geq n+1$, we are then able to apply Lemma \ref{lemadaptpoinc}. This yields essentially that
\begin{align}\label{thm21pfstep2wjsmall}
    3^{-ns}\left[\overline{\mathbf{s}}^{1/2}\nabla (w_j-\phi_j) \right]_{\Hminusul(\diamondsuit_n)}\leq C
    \|\mathbf{s}^{1/2}\nabla (w_j-\phi_j)\|_{\underline{L}^2(\diamondsuit_n)}.
\end{align}
Then, we may utilize our assumption of (iii)$_{k-1}^{'}$ and \eqref{thm21pfstep2crudebd} at scale $j$ so that
\begin{align*}
    \|\mathbf{s}^{1/2}\nabla (w_j-\phi_j)\|_{\underline{L}^2(\diamondsuit_n)}\leq  C3^{-((j-1)-n)(k-1-\rho)}\|\mathbf{s}^{1/2}\nabla w_j\|_{\underline{L}^2(\diamondsuit_{j-1})}\leq C\left(\frac{\mathcal{X_H}}{3^j} \right)^{\nf{\theta}{2}}
    \|\mathbf{\overline{s}}^{1/2}\nabla \overline{u}\|_{\underline{L}^2(\diamondsuit_{j})}.
\end{align*}
Now, we note that $(\mathcal{X_H}/3^j)^{\nf{\theta}{2}}=3^{-\frac{\theta}{2}(j-n)} (\mathcal{X_H}/3^n)^{\nf{\theta}{2}}$, and going back the scales allow us to deduce for some $C(d,k,s,\gamma)<\infty$ that
\begin{align*}
    3^{-ns}\left[\overline{\mathbf{s}}^{1/2}\nabla (w_j-\phi_j) \right]_{\Hminusul(\diamondsuit_n)}\leq C3^{-(\nf{\theta}{2}-\rho)(j-n)}\left(\frac{\mathcal{X_H}}{3^n} \right)^{\nf{\theta}{2}}
    \|\mathbf{\overline{s}}^{1/2}\nabla \overline{u}\|_{\underline{L}^2(\diamondsuit_n)}.
\end{align*}
Because $\rho<\nf{\theta}{2}$, the sum in \eqref{thm21pfstep2triineqsuper} is uniformly bounded and thus, it holds that
\begin{align*}
    \sum_{j=n+1}^{m-1}\left[\overline{\mathbf{s}}^{1/2}\nabla (u_{j+1}-u_j-\phi_j)\right]_{\Hminusul(\diamondsuit_n)}\leq C\left(\frac{\mathcal{X_H}}{3^n} \right)^{\nf{\theta}{2}}
    \|\mathbf{\overline{s}}^{1/2}\nabla \overline{u}\|_{\underline{L}^2(\diamondsuit_n)}.
\end{align*}

Consequently, it remains to study the terms that include only the functions $\phi_j\in\mathcal{A}_{k-1}$ in (\ref{thm21pfstep2triineqsuper}). This will be the content of the following substep.

\emph{Step 2.2. Smallness of $\nabla \phi_j$.} Finally, in order to make sense of (\ref{thm21pfstep2triineqsuper}), we still need to argue that the terms consisting only of functions $\phi_j$ are small enough and that we can move to smaller scales $j\leq n$ within the weak norms. In other words, we wish to show that the last term of \eqref{thm21pfstep2triineqsuper} satisfies the upper bound of \eqref{thm21phstep2ptgoal}. The main idea here is to approximate the functions~$\phi_j$ by harmonic polynomials $p_j\in\overline{\mathcal{A}}_{k-1}$ of order $k-1$. Indeed, such polynomials exist by our induction assumption of~(i)$_{k-1}$, after which we obtain with the triangle inequality that
\begin{align}\label{thm21pfstep22triangle}
    \left[\overline{\mathbf{s}}^{1/2}\nabla \phi_j\right]_{\Hminusul(\diamondsuit_n)}\leq \left[\overline{\mathbf{s}}^{1/2}\nabla (\phi_j-p_j)\right]_{\Hminusul(\diamondsuit_n)}+\left[\overline{\mathbf{s}}^{1/2}\nabla p_j\right]_{\Hminusul(\diamondsuit_n)}.
\end{align}

Here, the first term on the right-hand side is bounded by the estimate given in (i)$_{k-1}$, so a change of scales gives that 
\begin{align}\label{thm31pfstep22harmpnoms}
    3^{-ns}\left[\overline{\mathbf{s}}^{1/2}\nabla (\phi_j-p_j)\right]_{\Hminusul(\diamondsuit_n)}\leq C3^{(k-2)(n-j)}\left(\frac{\mathcal{X_H}}{3^n} \right)^{\nf{\theta}{2}}\|\overline{\mathbf{s}}^{1/2}\nabla p_j \|_{\underline{L}^2(\diamondsuit_j)}
\end{align}
as the order of $\nabla p_j$ is $k-2$. Note that we take here such $k\geq 2$ so that $k-2\geq 0$ for the orders of the polynomials to be defined properly. This can be done because the case of $k=1$ is trivially valid due to $\phi_j\in \mathcal{A}_0$ being constants. The takeaway here is that the right-hand side of~(\ref{thm31pfstep22harmpnoms}) is a much more regular and integrable object than the left-hand side of (\ref{thm21pfstep22triangle}). It remains to estimate the second term on the right-hand side of (\ref{thm21pfstep22triangle}). Now, if we utilize the natural $L^2$ upper bound for negative Sobolev seminorms, we get, by the regularity properties of harmonic polynomials, that
\begin{equation}\label{thm21pfstep2phismall}
\begin{aligned}
    3^{-ns}\left[\overline{\mathbf{s}}^{1/2}\nabla p_j\right]_{\Hminusul(\diamondsuit_n)}&\leq C(d)\|\overline{\mathbf{s}}^{1/2}\nabla p_j\|_{\underline{L}^2(\diamondsuit_n)}\leq C3^{(k-2)(n-j)}\|\overline{\mathbf{s}}^{1/2}\nabla p_j\|_{\underline{L}^2(\diamondsuit_j)}.
\end{aligned}
\end{equation}
This implies, furthermore, that the terms consisting solely of the correctors $\phi_j$ in (\ref{thm21pfstep2triineqsuper}) are indeed small and integrable enough for our needs, since we can work very well with the harmonic polynomials $p_j$.

With these new regularity properties in mind, let us revert back to estimating the left-hand side of (\ref{thm21pfstep22triangle}). Now, for each $j\leq n$, we can freely utilize the triangle inequality with $w_j\in\mathcal{A}(\diamondsuit_j)$ to compute that
\begin{equation*}
\begin{aligned}
    3^{-ns}\left[\overline{\mathbf{s}}^{1/2}\nabla \phi_j\right]_{\Hminusul(\diamondsuit_n)}&\leq C3^{-js}3^{(k-2)(n-j)}\left[\overline{\mathbf{s}}^{1/2}\nabla \phi_j\right]_{\Hminusul(\diamondsuit_j)} \\
    &\leq C3^{-js}3^{(k-2)(n-j)}\left(\left[\overline{\mathbf{s}}^{1/2}\nabla (w_j-\phi_j)\right]_{\Hminusul(\diamondsuit_j)}+\left[\overline{\mathbf{s}}^{1/2}\nabla w_j\right]_{\Hminusul(\diamondsuit_j)}\right).
\end{aligned}
\end{equation*}
The first term on the right satisfies a rescaled variant of the estimate in (\ref{thm21pfstep2wjsmall}), whereas we can now reason for the last term with Lemma \ref{lemadaptpoinc} that
\begin{align*}
    3^{-js}\left[\overline{\mathbf{s}}^{1/2}\nabla w_j \right]_{\Hminusul(\diamondsuit_j)}\leq C\left(1+ C\left(\frac{\mathcal{X_H}}{3^j} \right)^{\nf{\theta}{2}}\right)\|\mathbf{s}^{1/2}\nabla w_j\|_{\underline{L}^2(\diamondsuit_j)}.
\end{align*}
From all the computations we have performed so far within Step 2.1, starting from (\ref{thm21pfstep2estim1}), it can now be seen that
\begin{align*}
    3^{-ns}\left[\overline{\mathbf{s}}^{1/2}\nabla \phi_j\right]_{\Hminusul(\diamondsuit_n)}\leq C3^{-(1-\nf{\theta}{2})(n-j)}\left(\frac{\mathcal{X_H}}{3^n} \right)^{\nf{\theta}{2}}
    \|\mathbf{\overline{s}}^{1/2}\nabla \overline{u}\|_{\underline{L}^2(\diamondsuit_{n})}.
\end{align*}

Thus, the last sum in (\ref{thm21pfstep2triineqsuper}) converges nicely, and we have concluded that each term there on the right-hand side is bounded by the same upper bound. In other words, this means that
\begin{align*}
    3^{-ns}\left[\overline{\mathbf{s}}^{1/2}\nabla (v_m-\overline{u})\right]_{\Hminusul(\diamondsuit_n)}\leq C\left(\frac{\mathcal{X_H}}{3^n} \right)^{\nf{\theta}{2}}
    \|\overline{\mathbf{s}}^{1/2}\nabla \overline{u} \|_{\underline{L}^2(\diamondsuit_{n})}.
\end{align*}
Consequently, we have now verified that the functions $u_n$ given by the harmonic approximation in~(\ref{thm21pfstep2harmappr}) are close enough to $v_m\in\mathcal{A}(\diamondsuit_m)$ above.

\emph{Step 2.3. Same for fluxes.} In this step, we repeat the same argument for the fluxes, that is, the left-hand side below (when scaled properly by the factor of $3^{-ns}$) has the same upper bound of~\eqref{thm21phstep2ptgoal}. Namely, it holds by (\ref{thm21pfstep2triineqsuper}) that 
\begin{equation}\label{thm21pfstep2fluxineqs}
    \begin{aligned}
&\left[\overline{\mathbf{s}}^{-1/2}(\mathbf{a}\nabla v_m-\overline{\mathbf{a}}\nabla \overline{u})\right]_{\Hminusul(\diamondsuit_n)}
\leq \left[\overline{\mathbf{s}}^{-1/2}(\mathbf{a}\nabla u_{n+1}-\overline{\mathbf{a}}\nabla \overline{u})\right]_{\Hminusul(\diamondsuit_n)} \\ 
&+\sum_{j=n+1}^{m-1}\left[\overline{\mathbf{s}}^{-1/2}\mathbf{a}\nabla(u_{j+1}-u_j-\phi_j)\right]_{\Hminusul(\diamondsuit_n)}+\sum_{j=1}^{n}\left[\overline{\mathbf{s}}^{-1/2}\mathbf{a}\nabla\phi_j\right]_{\Hminusul(\diamondsuit_n)}.
    \end{aligned}
\end{equation}
For the first term on the right-hand side, the harmonic approximation in (\ref{thm21phstep2ptgoal}) gives directly the desired estimate again, that is,
\begin{align}\label{thm21pfstep23goodrhs}
    3^{-ns}\left[\overline{\mathbf{s}}^{-1/2}(\mathbf{a}\nabla u_{n+1}-\overline{\mathbf{a}}\nabla \overline{u})\right]_{\Hminusul(\diamondsuit_n)}\leq C\left(\frac{\mathcal{X_H}}{3^n} \right)^{\nf{\theta}{2}}
    \|\overline{\mathbf{s}}^{1/2}\nabla \overline{u} \|_{\underline{L}^2(\diamondsuit_n)}.
\end{align}
For the second term, we utilize Lemma \ref{lemadaptpoinc} to obtain that
\begin{align}\label{thm21pfstep23middleterm}
3^{-ns} \left[\overline{\mathbf{s}}^{-1/2}\mathbf{a}\nabla(w_j-\phi_j)\right]_{\Hminusul(\diamondsuit_n)}\leq C\left(\frac{\mathcal{X_H}}{3^n} \right)^{\nf{\theta}{2}}\|\mathbf{s}^{1/2}\nabla(w_j-\phi_j)\|_{\underline{L}^2(\diamondsuit_n)}+C\|\mathbf{s}^{1/2}\nabla(w_j-\phi_j)\|_{\underline{L}^2(\diamondsuit_n)}
\end{align}
after which, we can follow our earlier reasoning in Step 2.1, starting from (\ref{thm21pfstep2estim1}) to obtain the same right-hand side as in (\ref{thm21pfstep23goodrhs}). For the final term in the second row of (\ref{thm21pfstep2fluxineqs}), where again $j\leq n$, we can write similarly as before with the triangle inequality that
\begin{equation}\label{thm21pfstep2fluxphi}
\begin{aligned}
3^{-ns}&\left[\overline{\mathbf{s}}^{-1/2}\mathbf{a}\nabla\phi_j\right]_{\Hminusul(\diamondsuit_n)}\leq C3^{-js}3^{(k-2)(n-j)}\left[\overline{\mathbf{s}}^{-1/2}\mathbf{a}\nabla\phi_j\right]_{\Hminusul(\diamondsuit_j)} \\
&\leq C3^{-js}3^{(k-2)(n-j)}\left(\left[\overline{\mathbf{s}}^{-1/2}\mathbf{a}\nabla(w_j-\phi_j)\right]_{\Hminusul(\diamondsuit_j)}+\left[\overline{\mathbf{s}}^{-1/2}\mathbf{a}\nabla w_j\right]_{\Hminusul(\diamondsuit_j)}\right),
\end{aligned}
\end{equation}
whose first term on the right (up to a change of scales) we already estimated in (\ref{thm21pfstep23middleterm}). For the second term, we can first utilize Lemma \ref{lemadaptpoinc} and then our earlier computations starting from (\ref{thm21pfstep21nic}). Again, we can argue that the left-hand side of (\ref{thm21pfstep2fluxphi}) is sufficiently small by~(i)$_{k-1}$, the triangle inequality, Lemma \ref{lemadaptpoinc}, and (\ref{fromutouhom}). In summary, we can now state that
\begin{align*}
    3^{-ns}\left[\overline{\mathbf{s}}^{-1/2}(\mathbf{a}\nabla v_m-\overline{\mathbf{a}}\nabla \overline{u})\right]_{\Hminusul(\diamondsuit_n)}\leq C\left(\frac{\mathcal{X_H}}{3^n} \right)^{\nf{\theta}{2}}
    \|\overline{\mathbf{s}}^{1/2}\nabla \overline{u} \|_{\underline{L}^2(\diamondsuit_n)}.
\end{align*}

\emph{Step 2.4. One last time for the $L^2$ norms.} We will repeat the same argument once more for the~$L^2$ norms to show that the left-hand side below (after proper scaling) satisfies the upper bound of~\eqref{thm21phstep2ptgoal}. Imitating \eqref{thm21pfstep2triineqsuper}, the triangle inequality yields that
\begin{align}\label{step24triineq}
    \|v_m-\overline{u}\|_{\underline{L}^2(\diamondsuit_n)}\leq \|u_{n+1}-\overline{u}\|_{\underline{L}^2(\diamondsuit_n)}+\sum_{j=n+1}^{m-1}\|w_j-\phi_j\|_{\underline{L}^2(\diamondsuit_n)}+\sum_{j=1}^{n}\|\phi_j\|_{\underline{L}^2(\diamondsuit_n)}.
\end{align}
Again, for the first term, the estimation of \eqref{thm21phstep2ptgoal} directly provides the result that
\begin{align*}
    3^{-n}\overline{\lambda}^{1/2}\|u_{n+1}-\overline{u}\|_{\underline{L}^2(\diamondsuit_{n})}\leq C\left(\frac{\mathcal{X_H}}{3^n} \right)^{\nf{\theta}{2}}
    \|\overline{\mathbf{s}}^{1/2}\nabla \overline{u} \|_{\underline{L}^2(\diamondsuit_n)}.
\end{align*}
For the second term, we apply the Poincaré inequality (after normalizing the functions properly so that $(w_j-\phi_j)_{\diamondsuit_n}=0$), stating that
\begin{align*}
    3^{-n}\overline{\lambda}^{1/2}\|w_j-\phi_j\|_{\underline{L}^2(\diamondsuit_n)}\leq C\|\shom^{1/2}\nabla(w_j-\phi_j)\|_{\underline{L}^2(\diamondsuit_n)}
\end{align*}
after which, the claim follows by Step 2.1. For the last term on the right-hand side of \eqref{step24triineq}, we apply the triangle inequality to achieve that
\begin{align*}
    3^{-n}\overline{\lambda}^{1/2}\|\phi_j\|_{\underline{L}^2(\diamondsuit_n)}\leq C3^{-j}\overline{\lambda}^{1/2}3^{(k-2)(n-j)}\left(\|w_j-\phi_j\|_{\underline{L}^2(\diamondsuit_j)}+\|w_j\|_{\underline{L}^2(\diamondsuit_j)}\right)
\end{align*}
as now $j\leq n$. For the first term on the right-hand side, the Poincaré inequality and Step 2.1 again yield the desired upper bound. For the second term, it suffices to apply the triangle inequality and the harmonic approximation estimate. Furthermore, utilizing (i)$_{k-1}$, we can argue as before that the norms within the last summation of \eqref{step24triineq} are small and regular enough. Consequently, we can now conclude that
\begin{align*}
    3^{-n}\overline{\lambda}^{1/2}\|v_m-\overline{u}\|_{\underline{L}^2(\diamondsuit_n)}\leq C\left(\frac{\mathcal{X_H}}{3^n} \right)^{\nf{\theta}{2}}
    \|\overline{\mathbf{s}}^{1/2}\nabla \overline{u} \|_{\underline{L}^2(\diamondsuit_n)}.
\end{align*}

\emph{Step 2.5. Conclusion.} After all of the previous substeps, we finally have the desired estimate for the expression on the left-hand side below. Namely, we have shown that the following bound can be estimated for some constant $C(d,k,s,\gamma)<\infty$ after taking the limit $v_m\to u$ as $m\to\infty$ that
\begin{align*}
    3^{-n}\overline{\lambda}^{1/2}\|u-\overline{u} \|_{\underline{L}^2(\diamondsuit_n)}+3^{-ns}\left[\overline{\mathbf{A}}^{1/2}\begin{bmatrix}
\nabla u-\nabla \overline{u} \\ \mathbf{a}\nabla u-\overline{\mathbf{a}}\nabla\overline{u}
    \end{bmatrix} \right]_{\Hminusul(\diamondsuit_n)}\leq C
    \left(\frac{\mathcal{X_H}}{3^n} \right)^{\nf{\theta}{2}} \|\overline{\mathbf{s}}^{1/2}\nabla \overline{u} \|_{\underline{L}^2(\diamondsuit_n)}.
\end{align*}
This limit, indeed, exists due to the known precompactness and embedding results, but let us be more precise here and justify this fact more rigorously.

Since the functions $v_m$ are locally bounded in $L^2$, we know due to precompactness that a weak vector-valued limit $\mathbf{f}$ exists (up to a subsequence) for $\mathbf{s}^{1/2}\nabla v_m$ as $m\to\infty$. On the other hand, there is also a strong limit $v_m\to v$ within $L^2$ functions. It remains to be seen that this limit $v$ is actually the same function $u$ as in the claim of (\ref{thmhighregclaim2}) and that it solves the original elliptic equation of (\ref{ellipeq}).

Let us start by showing that this limit $v$ coincides with our claim (ii)$_k$ to be proven. Fix any test function $\varphi\in C_c^\infty(\diamondsuit_n)$. Note that the scale of the adapted cube here is nearly irrelevant, as is whether we utilize volume-normalized integrals below or not. Integrating by parts provides that
\begin{align*}
    \int_{\diamondsuit_n} \varphi\cdot\nabla v_m=-\int_{\diamondsuit_n} \nabla\cdot\varphi v_m \xrightarrow[m\to\infty]{} -\int_{\diamondsuit_n} \nabla\cdot\varphi v.
\end{align*}
By rewriting the left-hand side above, we may compute that
\begin{align*}
    \int_{\diamondsuit_n} \mathbf{s}^{-1/2}\varphi\cdot\mathbf{s}^{1/2}\nabla v_m \xrightarrow[m\to\infty]{} \int_{\diamondsuit_n} \mathbf{s}^{-1/2}\varphi\cdot \mathbf{f} = \int_{\diamondsuit_n} \varphi\cdot\mathbf{s}^{-1/2}\mathbf{f}.
\end{align*}
Combining the two displays above allows us to deduce that $\nabla v=\mathbf{s}^{-1/2}\mathbf{f}$ in the sense of distributions as $\mathbf{s}^{-1/2}\in L^2$ locally. That is, as we already showed in the previous substeps, we have now found a limiting function $v$, which satisfies the claim of (ii)$_k$, so it turns out that indeed $v=u$ necessarily.

The final observation to note here is that the function $v$ discovered above is a solution of the elliptic equation in (\ref{ellipeq}). Again, we fix some test function $\varphi\in C_c^\infty(\diamondsuit_n)$. Because each function $v_m$ is a solution of (\ref{ellipeq}), we may reason that
\begin{align*}
    \int_{\diamondsuit_n} \mathbf{a}\nabla v_m\cdot \nabla\varphi = \int_{\diamondsuit_n} \mathbf{a}\mathbf{s}^{-1/2}\mathbf{s}^{1/2}\nabla v_m\cdot \nabla\varphi = \int_{\diamondsuit_n} \mathbf{s}^{1/2}\nabla v_m\cdot \mathbf{s}^{-1/2}\mathbf{a}^t\nabla\varphi = 0.
\end{align*}
Here $\mathbf{s}^{-1/2}\mathbf{a}^t\nabla\varphi\in L_{\textnormal{loc}}^2$, and because $\a^t=\s-\k$, we have that
\begin{align*}
    \mathbf{s}^{-1/2}\mathbf{a}^t\nabla\varphi = \s^{1/2}\nabla\varphi - \mathbf{s}^{-1/2}\k\nabla\varphi,
\end{align*}
whose both terms belong to $L_{\textnormal{loc}}^2$ by Assumption \ref{d.renormellipt}. Thus, we may take the limit $m\to\infty$, which allows us to conclude that
\begin{align*}
    \int_{\diamondsuit_n} \mathbf{s}^{1/2}\nabla v\cdot \mathbf{s}^{-1/2}\mathbf{a}^t\nabla\varphi = \int_{\diamondsuit_n} \mathbf{a}\nabla v\cdot \nabla\varphi = 0
\end{align*}
implying that $v$ is a solution of (\ref{ellipeq}). This completes the proof of (ii)$_k$.

\emph{Step 3. Proof of (i)$_k$.} Suppose now that the assertions (i)$_{k-1}$ and (ii)$_k$ hold. From the harmonic approximation result of Proposition \ref{propharmappr}, it can be seen that for each $u\in\mathcal{A}_k\subset\mathcal{A}(\R^d)$, which we restrict to $\diamondsuit_n$, there exists some $\overline{u}\in\overline{\mathcal{A}}(\diamondsuit_{n})$ such that
\begin{align*}
    3^{-n}\overline{\lambda}^{1/2}\|u-\overline{u}\|_{\underline{L}^2(\diamondsuit_{n})}+3^{-ns}\left[\overline{\mathbf{A}}^{1/2}\begin{bmatrix}
       \nabla u- \nabla \overline{u} \\ \mathbf{a}\nabla u -\overline{\mathbf{a}}\nabla\overline{u}
    \end{bmatrix} \right]_{\Hminusul(\diamondsuit_{n})}\leq C\left(\frac{\mathcal{X_H}}{3^n} \right)^{\nf{\theta}{2}} \|\mathbf{s}^{1/2}\nabla u \|_{\underline{L}^2(\diamondsuit_n)}.
\end{align*}
Thus, it will be enough to ''upgrade'' $\overline{u}$ from $\overline{\mathcal{A}}(\diamondsuit_{n})$ to $\overline{\mathcal{A}}_k$ with a similar argument to that in the previous step. This implies the claim by applying our reasoning from \eqref{fromutouhom} for the right-hand side above. That is, we want to show that for every $u\in\mathcal{A}_k$, there exists a certain $\overline{u}\in\overline{\mathcal{A}}_k$ such that the following estimate
\begin{align}\label{thm21pfstep3goal}
    3^{-n}\overline{\lambda}^{1/2}\|u-\overline{u}\|_{\underline{L}^2(\diamondsuit_{n})}+3^{-ns}\left[\overline{\mathbf{A}}^{1/2}\begin{bmatrix}
       \nabla u- \nabla \overline{u} \\ \mathbf{a}\nabla u -\overline{\mathbf{a}}\nabla\overline{u}
    \end{bmatrix} \right]_{\Hminusul(\diamondsuit_{n})}\leq C\left(\frac{\mathcal{X_H}}{3^n} \right)^{\nf{\theta}{2}} \|\overline{\mathbf{s}}^{1/2}\nabla \overline{u} \|_{\underline{L}^2(\diamondsuit_n)}
\end{align}
holds for some constant $C(d,k,s,\gamma)<\infty$ as the scale is sufficiently large (that is, $3^n\geq \mathcal{X_H})$. 

\emph{Step 3.1. Analyzing the excess decay.} As in \cite{ak}, to prove the desired result of (\ref{thm21pfstep3goal}), we wish to study the decay of the following \emph{excess} quantity defined by
\begin{align*}
    E_k(r):=r^{-k}\inf_{p\in\mathcal{\overline{A}}_k}\|u-p\|_{\underline{L}^2(B_{r})},
\end{align*}
where $r\geq C(d,k,\mu)\mathcal{X_H}$, for a large enough $C(d,k,\mu)<\infty$ specified below, and $u\in\mathcal{A}_k$ are fixed. Note that we can traverse between the adapted balls and cubes interchangeably. We recall that here and elsewhere (unless stated otherwise)~$B_r$ refers to the adapted open ball defined in (\ref{defadaptedballs}). Now, our aim is to prove the following decay estimate for some fixed constant $\mu(d)\in (0,\nf{1}{2}]$ so that
\begin{align}\label{thm21pfexcessestimate}
    E_k(\mu r)\leq \frac{1}{2}E_k(r)+
    C(d,k,s,\gamma,\mu)r^{-k}\left(\frac{r}{\mathcal{X_H}} \right)^{-\nf{\theta}{2}}\|p_{k,r} \|_{\underline{L}^2(B_{r})},
\end{align}
where $p_{k,r}\in\overline{\mathcal{A}}_k$ is the $\mathbf{\overline{a}}$-harmonic polynomial minimizer for the quantity $E_k(r)$. Indeed, by employing the triangle inequality, we have that
\begin{align}\label{step31triineq}
    E_k(\mu r)\leq (\mu r)^{-k}\inf_{p\in\mathcal{\overline{A}}_k}\left(\|u-\overline{u}\|_{\underline{L}^2(B_{\mu r})}+\|\overline{u}-p\|_{\underline{L}^2(B_{\mu r})} \right).
\end{align}

For the second term above, the properties of harmonic polynomials allow us to estimate with the triangle inequality that
\begin{align}\label{thm21pfstep3decayfirstterm}
    (\mu r)^{-k}\inf_{p\in\mathcal{\overline{A}}_k}\|\overline{u}-p\|_{\underline{L}^2(B_{\mu r})}\leq C\mu r^{-k} \inf_{p\in\mathcal{\overline{A}}_k}\|\overline{u}-p\|_{\underline{L}^2(B_{r})}\leq C\mu E_k(r)+C\mu r^{-k}\|u-\overline{u}\|_{\underline{L}^2(B_{r})}
\end{align}
for some positive $C(d,k)<\infty$. Here, we will choose $\mu$ so that $C\mu<\nf16$ for the largest constant~$C$ above. This essentially yields the first term of (\ref{thm21pfexcessestimate}). For the first term above on the right-hand side of \eqref{step31triineq}, it follows from the established harmonic approximation estimate of Proposition~\ref{propharmappr} that
\begin{align*}
    (\mu r)^{-k}\|u-\overline{u}\|_{\underline{L}^2(B_{\mu r})}
    \leq C\mu^{-k+1-\nf{\theta}{2}} r^{-k+1}\overline{\lambda}^{-1/2}\left(\frac{\mathcal{X_H}}{r} \right)^{\nf{\theta}{2}} \|\mathbf{s}^{1/2}\nabla u\|_{\underline{L}^2(B_{\nf{r}{2}})}.
\end{align*}
Next, by applying the non-iterative Caccioppoli inequality for adapted balls in Corollary \ref{cor21}, we have that
\begin{align*}
    (\mu r)^{-k}\|u-\overline{u}\|_{\underline{L}^2(B_{\mu r})}\leq C\mu^{-k+1-\nf{\theta}{2}} r^{-k}\left(\frac{\mathcal{X_H}}{r} \right)^{\nf{\theta}{2}} \|u-(u)_{B_r} \|_{\underline{L}^2(B_{r})}.
\end{align*}
We now note that due to the triangle inequality and $(u)_{B_r}=(u-p)_{B_r}+(p)_{B_r}$, we may reason that
\begin{align*}
    \|u-(u)_{B_r} \|_{\underline{L}^2(B_{r})}\leq \|u-p \|_{\underline{L}^2(B_{r})}+\|p-(p)_{B_r} \|_{\underline{L}^2(B_{r})},
\end{align*}
and so, by taking the infimum over $p\in\overline{\mathcal{A}}_k$, we can conclude that
\begin{align*}
    (\mu r)^{-k}\|u-\overline{u}\|_{\underline{L}^2(B_{\mu r})}\leq C\mu^{-k+1-\nf{\theta}{2}}\left(\frac{r}{\mathcal{X_H}} \right)^{\nf{-\theta}{2}} E_k(r)+C\mu^{-k+1-\nf{\theta}{2}} r^{-k}\left(\frac{r}{\mathcal{X_H}} \right)^{\nf{-\theta}{2}}\|p_{k,r} \|_{\underline{L}^2(B_{r})}.
\end{align*}

We can then choose the scale $r\geq C(d,k,\mu)\mathcal{X_H}$ so big that 
\begin{align*}
    (\mu r)^{-k}\|u-\overline{u}\|_{\underline{L}^2(B_{\mu r})}\leq \frac{1}{6}E_k(r)+C\mu^{-k+1-\nf{\theta}{2}} r^{-k}\left(\frac{r}{\mathcal{X_H}} \right)^{\nf{-\theta}{2}}\|p_{k,r} \|_{\underline{L}^2(B_{r})}
\end{align*}
always holds for $r\in[C(d,k,\mu)\mathcal{X_H},\infty)$. Consequently, the desired decay estimate of (\ref{thm21pfexcessestimate}) follows by reabsorbing the first term on the right and performing the same analysis for the last term of~\eqref{thm21pfstep3decayfirstterm} as above. We also note that the same argument applies for every $k\in\N$.

\emph{Step 3.2. Integrating and estimating the excess.} Let us then revert back to the statement in~(\ref{thm21pfstep3goal}). The aim is to integrate the estimate of (\ref{thm21pfexcessestimate}) with respect to the Haar measure on~$\R_+$ and to proceed inductively for $k$. We wish to establish an upper bound in the form of \eqref{thm21pfstep32endpt}. Let $r,R>0$ be such that $r\in[C(d,k,\mu)\mathcal{X_H},\infty)$ and $\frac{r}{\mu}\leq R$. A change of variables within the Haar measure provides that
\begin{equation*}
\begin{aligned}
    \int_{\mu r}^{\mu R} E_{k+1}(t) \frac{dt}{t}&=\int_r^R E_{k+1}(\mu t) \frac{dt}{t} \leq \frac{1}{2}\int_r^{\mu R} E_{k+1}(t)\frac{dt}{t} \\ &+\frac{1}{2}\int_{\mu R}^R E_{k+1}(t)\frac{dt}{t}+C\int_r^R \left(\frac{t}{\mathcal{X_H}} \right)^{\nf{-\theta}{2}} \|p_{k+1,t}\|_{\underline{L}^2(B_{t})} \frac{dt}{t^{k+2}},
\end{aligned}
\end{equation*}
where the second integral on the rightmost side vanishes as $R\to\infty$ since $u\in\mathcal{A}_k$ and $p\in\overline{\mathcal{A}}_{k+1}$. Furthermore, by reabsorbing the first term from the right-hand side to the left-hand side, we can now deduce that
\begin{align}\label{thm21pfstep3estim1}
    \int_{\mu r}^{\infty} E_{k+1}(t) \frac{dt}{t}\leq C\int_r^\infty \left(\frac{t}{\mathcal{X_H}} \right)^{\nf{-\theta}{2}} \|p_{k+1,t}\|_{\underline{L}^2(B_{t})} \frac{dt}{t^{k+2}}.
\end{align}

Consider next the Taylor series of $p\in\mathcal{\overline{A}}_k$. For all $l\in\N$, we define the mapping
\begin{align*}
    \pi_lp(x):=\sum_{|\alpha|=l}\frac{1}{\alpha !}(\partial_\alpha p)(0)x^\alpha
\end{align*}
to portray the different homogeneity parts of $p$ exactly as in Appendix A. Furthermore, let us define that 
\begin{equation}\label{thm21pfstep3defomega}
\begin{aligned}
    \omega(r):=\sum_{l=0}^k \omega_l(r) \quad \textnormal{with} \quad \omega_l(r):\!&=\int_r^\infty \left(\frac{r}{t} \right)^{k+1+\nf{\theta}{2}}\|\pi_lp_{k,t} \|_{\underline{L}^2(B_{t})} \frac{dt}{t}.
    \end{aligned}
\end{equation}
Note that $p_{k,t}$ is an $\overline{\mathbf{a}}$-harmonic polynomial of order $k$, so it possesses the orthogonality properties of harmonic polynomials within the adapted balls, as explained in Appendix A. Thus, we can utilize the \emph{spherical harmonics} theory and perform a change of variables (cf. Appendix A) in order to employ the results for the usual harmonic polynomials of the standard Euclidean geometry. Consequently, this yields the upper bound due to (\ref{appendixsphericalharmonics}) that
\begin{align*}
    \omega(r)\leq \int_r^\infty \left(\frac{r}{t} \right)^{k+1+\nf{\theta}{2}}\|p_{k,t} \|_{\underline{L}^2(B_{t})} \frac{dt}{t}.
\end{align*}

By adopting this notation and applying the triangle inequality after dividing the polynomial~$p_{k+1,t}$ into its first $k$ and $k+1$ parts of their respective orders (as we identify the lower-order parts of $p_{k+1,t}$ with $p_{k,t}$), the estimate in (\ref{thm21pfstep3estim1}) provides that
\begin{align}\label{thm21pfstep3intenest}
    \int_{\mu r}^{\infty} E_{k+1}(t) \frac{dt}{t}\leq C\left(\frac{r}{\mathcal{X_H}}\right)^{\nf{-\theta}{2}} r^{-k-1}\omega(r)+C\int_r^\infty \left(\frac{t}{\mathcal{X_H}} \right)^{\nf{-\theta}{2}} \|\pi_{k+1} p_{k+1,t}\|_{\underline{L}^2(B_t)} \frac{dt}{t^{k+2}}.
\end{align}
Now, we wish to argue that the last term on the right-hand side above becomes sufficiently small. Indeed, if $s,t\in\R$, then we have by a telescope summation argument that
\begin{equation}\label{thm21pfstep3insert1}
\begin{aligned}
    t^{-k-1}\|\pi_{k+1} p_{k+1,t}\|_{\underline{L}^2(B_t)}&\leq \sum_{j=1}^\infty t^{-k-1}\|\pi_{k+1} p_{k+1,2^jt}-\pi_{k+1} p_{k+1,2^{j-1}t}\|_{\underline{L}^2(B_{t})}\\
    &+\limsup_{s\to\infty} t^{-k-1}\|\pi_{k+1} p_{k+1,s}\|_{\underline{L}^2(B_{t})}.
\end{aligned}    
\end{equation}
For the second term on the right, we apply the spherical harmonics theory after a change of scales for $t\leq s$, which yields that
\begin{align*}
    t^{-k-1}\|\pi_{k+1}p_{k+1,s}\|_{\underline{L}^2(B_{t})}\leq s^{-k-1}\|\pi_{k+1}p_{k+1,s}\|_{\underline{L}^2(B_{s})}\leq s^{-k-1}\|p_{k+1,s}\|_{\underline{L}^2(B_{s})}.
\end{align*}
Furthermore, the triangle inequality and $p_{k+1,s}$ being the harmonic polynomial minimizer for $u$ allow us to estimate that
\begin{align*}
    s^{-k-1}\|p_{k+1,s}\|_{\underline{L}^2(B_{s})}\leq s^{-k-1}\|u-p_{k+1,s}\|_{\underline{L}^2(B_{s})}+s^{-k-1}\|u\|_{\underline{L}^2(B_{s})}\leq 2s^{-k-1}\|u\|_{\underline{L}^2(B_{s})}.
\end{align*}
Because $u\in\mathcal{A}_k$, this implies that $s^{-k-1}\|u\|_{\underline{L}^2(B_{s})}$ tends to zero as $s\to\infty$, so it holds at the limit that the second term on the right of \eqref{thm21pfstep3insert1} also vanishes. 

For the first term consisting of the telescope summation in (\ref{thm21pfstep3insert1}), the same reasoning remains valid as above. That is, by applying the triangle inequality multiple times, we can conclude its decay as well. Consequently, we are now able to state that the integrand of the following integral
\begin{align*}
    \int_r^\infty \left(\frac{t}{\mathcal{X_H}} \right)^{\nf{-\theta}{2}} \|\pi_{k+1}p_{k+1,t}\|_{\underline{L}^2(B_t)} \frac{dt}{t^{k+2}}
\end{align*}
decays to zero as $t\to\infty$. To summarize our conclusions so far, we first note that the triangle inequality, as well as the definitions of $E_{k+1}$ and $p_{k+1,t}$ for $0<t\leq s\leq 2t$, imply that
\begin{equation}\label{thm21pfstep3pdiff}
\begin{aligned}
    s^{-k-1}\|p_{k+1,s}-p_{k+1,t}\|_{\underline{L}^2(B_t)}&\leq s^{-k-1}\left(\|u-p_{k+1,s}\|_{\underline{L}^2(B_t)}+\|u-p_{k+1,t}\|_{\underline{L}^2(B_t)}\right)\leq CE_{k+1}(2t).
\end{aligned}
\end{equation}
Furthermore, by applying a telescope summation argument once more alongside the triangle inequality and our earlier reasoning from \eqref{thm21pfstep3insert1} to (\ref{thm21pfstep3pdiff}), we can now deduce for the Haar measures that
\begin{equation}\label{thm21pfstep3insert3}
\begin{aligned}
    \sup_{t\in[r,\infty)}t^{-k-1}\|\pi_{k+1}p_{k+1,t}\|_{\underline{L}^2(B_t)}&\leq C\int_{r/2}^\infty \sup_{s\in(t,2t)}t^{-k-1}\|\pi_{k+1}p_{k+1,s}-\pi_{k+1}p_{k+1,t}\|_{\underline{L}^2(B_t)}\frac{dt}{t} \\
    &\leq C\int_{\mu r}^\infty E_{k+1}(t) \frac{dt}{t}.
\end{aligned}
\end{equation}
Now, by inserting the estimate from (\ref{thm21pfstep3insert3}) into the established estimate of (\ref{thm21pfstep3intenest}), we obtain that
\begin{align}\label{thm21pfstep3summary}
    \int_{\mu r}^{\infty} E_{k+1}(t) \frac{dt}{t}\leq C\int_{\mu r}^\infty \left(\frac{t}{\mathcal{X_H}} \right)^{\nf{-\theta}{2}} \frac{dt}{t}\int_{\mu r}^\infty E_{k+1}(t) \frac{dt}{t}+C\left(\frac{r}{\mathcal{X_H}} \right)^{\nf{-\theta}{2}}r^{-k-1}\omega(r)
\end{align}
holds while $C(d,k,\mu)\mathcal{X_H}\leq r$. Consequently, the inequality above satisfies for large enough $r$ after reabsorbing by (\ref{thm21pfstep3insert3}) that
\begin{align}\label{thm21pfstep32endpt}
    \int_{\mu r}^{\infty} E_{k+1}(t) \frac{dt}{t}+\sup_{t\in[r,\infty)}t^{-k-1}\|\pi_{k+1}p_{k+1,t}\|_{\underline{L}^2(B_t)}\leq C\left(\frac{r}{\mathcal{X_H}} \right)^{\nf{-\theta}{2}}r^{-k-1}\omega(r).
\end{align}

\emph{Step 3.3. Estimating $\omega(r)$.} Our next objective here is to assess the size of $\omega(r)$ above. We will proceed by analyzing each homogeneous part $\omega_l(r)$ separately to conclude \eqref{thm21pfstep3midpt} at the end of this step. The definition of $\omega_l(r)$ with a change of variables yields that
\begin{equation*}
\begin{aligned}
    \omega_l(2r)&=\int_{2r}^\infty \left(\frac{2r}{t} \right)^{k+1+\nf{\theta}{2}}\|\pi_l p_{k,t} \|_{\underline{L}^2(B_{t})} \frac{dt}{t} 
    =2^l\int_r^\infty \left(\frac{r}{t} \right)^{k+1+\nf{\theta}{2}} \|\pi_l p_{k,2t} \|_{\underline{L}^2(B_{t})} \frac{dt}{t}.
\end{aligned}
\end{equation*}
Furthermore, with the triangle inequality argument starting from (\ref{thm21pfstep3pdiff}), we deduce with \eqref{thm21pfstep32endpt} that
\begin{equation*}
\begin{aligned}
    \omega_l(2r)&\leq 2^l\omega_l(r)+C\int_r^\infty \left(\frac{r}{t} \right)^{k+1+\nf{\theta}{2}} \|p_{k,2t}-p_{k,t} \|_{\underline{L}^2(B_{t})} \frac{dt}{t} \\
    &\leq 2^l\omega_l(r)+Cr^{k+1}\int_r^\infty \left(\frac{r}{t} \right)^{\nf{\theta}{2}}\left(E_{k+1}(t)+\sup_{t\in[r,\infty)}t^{-k-1}\|\pi_{k+1}p_{k+1,t}\|_{\underline{L}^2(B_t)} \right) \frac{dt}{t}.
\end{aligned}
\end{equation*}
Consequently, we can now conclude due to our earlier computations in (\ref{thm21pfstep32endpt}) that
\begin{align*}
    \omega_l(2r)\leq 2^l\omega_l(r)+C\left(\frac{r}{\mathcal{X_H}} \right)^{\nf{-\theta}{2}}\omega(r).
\end{align*}

Applying the following summation and iteration argument with respect to $r$, we can estimate for every $r,t\in[C(d,k,\mu)\mathcal{X_H},\infty)$ with $r\leq t$ that
\begin{align}\label{thm21pfstep3omegaests}
    \omega(t)\leq C\left(\frac{t}{r} \right)^k\omega(r)\quad \textnormal{and} \quad \sum_{l=0}^{k-1} \omega_l(t)\leq C\left(\frac{t}{r} \right)^{k-\nf{\theta}{2}}\omega(r).
\end{align}
Let us briefly justify the statements above. Namely, if we write that $f(r):=r^{-k}\omega(r)$, then this implies the inequality that
\begin{align*}
    f(2r)\leq \left(1+C\left(\frac{r}{\mathcal{X_H}} \right)^{\nf{-\theta}{2}} \right)f(r).
\end{align*}
After iterating and taking the logarithm, we can see that
\begin{align*}
    \log\left(\frac{f(2^mr)}{f(r)} \right)\leq \sum_{j=0}^{m-1} \log\left(1+C\left(\frac{2^jr}{\mathcal{X_H}} \right)^{\nf{-\theta}{2}} \right),
\end{align*}
where the sum on the right-hand side is clearly bounded. This means that $f(2^mr)\leq Cf(r)$ holds for each $m\in\N$. The first claim then follows by setting $2^{m-1}r<t\leq 2^mr$. For the second claim, we utilize the definition of $\omega(r)$ and iterate across the scales.

Now, by the definition of $E_k$ and the Haar measure along with a change of variables, we note that
\begin{align*}
    E_k(r)\leq Cr\left(\int_{r}^{2r} E_{k+1}(t) \frac{dt}{t}+\sup_{t\in[r,2r]}t^{-k-1}\|\pi_{k+1} p_{k+1,t}\|_{\underline{L}^2(B_t)} \right),
\end{align*}
which allows us to deduce from (\ref{thm21pfstep3summary}) onward and (\ref{thm21pfstep3omegaests}) that
\begin{align}\label{thm21pfstep3midpt}
    \int_r^\infty E_k(t)\frac{dt}{t}\leq C\int_r^\infty \left(\frac{t}{\mathcal{X_H}} \right)^{\nf{-\theta}{2}} t^{-k}\omega(t) \frac{dt}{t}\leq C\left(\frac{r}{\mathcal{X_H}} \right)^{\nf{-\theta}{2}} r^{-k}\omega(r).
\end{align}
Looking back to (\ref{thm21pfstep3intenest}), we can now reduce the degrees of harmonic polynomials from $k+1$ to~$k$, since once again $u\in\mathcal{A}_k$.

\emph{Step 3.4. Conclusion.} Our deductions so far allow us to consider the homogeneous polynomial $\Tilde{p}_k\in\mathcal{\overline{A}}_k$ as a limit $\pi_k \Tilde{p}_k:=\lim_{t\to\infty} \pi_k p_{k,t}$ of homogeneous polynomials. In order to prove that this limit converges, we need to show that $\{\pi_k p_{k,t} \}_t$ is a Cauchy sequence. For each $r\in[C(d,k,\mu)\mathcal{X_H},\infty)$, we obtain by the triangle inequality, telescope summation, the Haar measure, \eqref{thm21pfstep3insert3}, and (\ref{thm21pfstep3midpt}) that
\begin{equation*}
\begin{aligned}
    \sup_{t\in(2r,\infty)}r^{-k}\|\pi_k p_{k,t}-\pi_k p_{k,r}\|_{\underline{L}^2(B_r)}&\leq C\int_r^\infty \sup_{s\in(t,2t)} t^{-k}\|\pi_kp_{k,s}-\pi_kp_{k,t}\|_{\underline{L}^2(B_t)} \frac{dt}{t} \\
    &\leq C\int_r^\infty E_k(t) \frac{dt}{t}\leq C\left(\frac{r}{\mathcal{X_H}} \right)^{\nf{-\theta}{2}}r^{-k}\omega(r).
\end{aligned}
\end{equation*}
It is now clear that the rightmost side vanishes as $r\to\infty$, and so the desired homogeneous polynomial $\Tilde{p}_k\in\mathcal{\overline{A}}_k$ exists as the aforementioned Cauchy sequence converges. In conclusion, now we have for each $r\in[C(d,k,\mu)\mathcal{X_H},\infty)$ that
\begin{align*}
    r^{-k}\|\pi_k p_{k,r}-\pi_k \Tilde{p}_k\|_{\underline{L}^2(B_r)}\leq C\left(\frac{r}{\mathcal{X_H}} \right)^{\nf{-\theta}{2}}r^{-k}\omega(r).
\end{align*}
Indeed, now $\Tilde{p}_k\in\mathcal{\overline{A}}_k$, because it holds that $\pi_k p_{k,r}\in\mathcal{\overline{A}}_k$. This reasoning alongside (\ref{thm21pfstep3omegaests}) and (\ref{thm21pfstep3midpt}) allows us to deduce that
\begin{equation*}
\begin{aligned}
    &r^{-k}\|u-\Tilde{p}_k \|_{\underline{L}^2(B_{r})}\leq C\int_r^{2r} t^{-k}\|u-\Tilde{p}_k \|_{\underline{L}^2(B_{t})} \frac{dt}{t} \\
    &\leq C\int_r^{2r} E_k(t) \frac{dt}{t} + C\int_r^{2r} t^{-k}\|\pi_kp_{k,t}-\pi_k\Tilde{p}_k\|_{\underline{
    L}^2(B_t)} \frac{dt}{t} + C\sum_{j=0}^{k-1}\int_r^{2r} t^{-k}\|\pi_j p_{k,t} \|_{\underline{L}^2(B_{t})} \frac{dt}{t} \\
    &\leq C\left(\frac{r}{\mathcal{X_H}} \right)^{\nf{-\theta}{2}}r^{-k}\omega(r).
\end{aligned}
\end{equation*}

Lastly, by utilizing the assumption (ii)$_k$, there exists now some corrector $\Tilde{\phi}_k\in\mathcal{A}_k$ corresponding to $\Tilde{p}_k$ that satisfies the respective estimate of (\ref{thmhighregclaim1}) such that $u-\Tilde{\phi}_k\in\mathcal{A}_{k-1}$. Furthermore, with the induction assumption (i)$_{k-1}$, there exists a harmonic polynomial $\Tilde{p}_{k-1}\in\mathcal{\overline{A}}_{k-1}$ for every $3^n\geq \mathcal{X_H}$ so that
\begin{equation*}
\begin{aligned}
    3^{-n}\overline{\lambda}^{1/2}\|u-\Tilde{\phi}_k-\Tilde{p}_{k-1}\|_{\underline{L}^2(\diamondsuit_{n})}&+3^{-ns}\left[\overline{\mathbf{A}}^{1/2}\begin{bmatrix}
\nabla u-\nabla \Tilde{\phi}_k-\nabla \Tilde{p}_{k-1} \\ \mathbf{a}\nabla (u-\Tilde{\phi}_k)-\overline{\mathbf{a}}\nabla\Tilde{p}_{k-1}
    \end{bmatrix} \right]_{\Hminusul(\diamondsuit_n)} \\
    &\leq C\left(\frac{\mathcal{X_H}}{3^n} \right)^{\nf{\theta}{2}} \|\mathbf{\overline{s}}^{1/2}\nabla \overline{u}\|_{\underline{L}^2(\diamondsuit_n)}.
\end{aligned}
\end{equation*}
This then implies the desired assertion of (\ref{thm21pfstep3goal}) by writing there that $\overline{u}:=\Tilde{p}_k+\Tilde{p}_{k-1}$ and utilizing the triangle inequality with our earlier reasoning.

\emph{Step 4. Proof of (iii)$_k^{'}$.} Let us now assume that claims (i)$_k$ and (ii)$_k$ are valid.
We fix $R\geq\mathcal{X_H}$ with $u\in\mathcal{A}(B_R)$ and $r_j:=\mu^{-j}\mathcal{X_H}$. Then, utilizing the same notation as in Step 3.1, the previous step guarantees, for every $u_{j+1}\in\mathcal{A}(B_{r_{j+1}})$, the existence of some $p_j \in\overline{\mathcal{A}}_k$ and $C(d,k,s,\gamma,\mu)<\infty$ such that
\begin{align}\label{thm21pfstep4harmappr}
    \inf_{p\in\overline{\mathcal{A}}_k}\|u_{j+1}-p\|_{\underline{L}^2(B_{r_j})}=\|u_{j+1}-p_j\|_{\underline{L}^2(B_{r_j})}\leq C
    \left(\left(\frac{r_j}{r_{j+1}} \right)^{k+1}+\left(\frac{r_j}{\mathcal{X_H}} \right)^{\nf{-\theta}{2}} \right)
    \|u_{j+1}\|_{\underline{L}^2(B_{r_{j+1}})}.
\end{align}
The inequality above follows from \eqref{thm21pfexcessestimate} as well as the simple observations of
\begin{align*}
    \inf_{p\in\overline{\mathcal{A}}_k}\|u_{j+1}-p\|_{\underline{L}^2(B_{r_j})}\leq \|u_{j+1}\|_{\underline{L}^2(B_{r_j})} \qand \|p_j\|_{\underline{L}^2(B_{r_j})}\leq 2\|u_{j+1}\|_{\underline{L}^2(B_{r_j})}.
\end{align*}
We may now refer to our assumption (ii)$_k$ to find $\phi_j\in\mathcal{A}_k$, which satisfies the bound that
\begin{align*}
    r_j^{-1}\overline{\lambda}^{1/2}\|\phi_j-p_j \|_{\underline{L}^2(B_{r_j})}
    \leq C\left(\frac{r_j}{\mathcal{X_H}} \right)^{\nf{-\theta}{2}}\| \overline{\mathbf{s}}^{1/2}\nabla p_j\|_{\underline{L}^2(B_{r_{j}})}.
\end{align*}
Now, by applying the Caccioppoli inequality and the triangle inequality, we are able to reason that
\begin{align*}
    \|\phi_j-p_j \|_{\underline{L}^2(B_{r_j})}
    \leq C\left(\frac{r_j}{\mathcal{X_H}} \right)^{\nf{-\theta}{2}}\| u_{j+1}\|_{\underline{L}^2(B_{r_{j+1}})}.
\end{align*}
Consequently, if we define that $u_j:=u_{j+1}-\phi_j$, then we may apply the triangle inequality after adding $p_j$ and $-p_j$ as $p_j\in\overline{\mathcal{A}}_k$ to deduce after reabsorbing that
\begin{align*}
    \|u_j\|_{\underline{L}^2(B_{r_j})}\leq C(\mu^{k+1}+\mu^{\nf{j\theta}{2}})\|u_{j+1}\|_{\underline{L}^2(B_{r_{j+1}})}.
\end{align*}
Lastly, a simple iteration scheme allows us to conclude for $\rho<\nf{\theta}{2}$ and sufficiently small $\mu$ that
\begin{align*}
    \|u_j\|_{\underline{L}^2(B_{r_j})}\leq C\left( \frac{r_j}{r_{J}}\right)^{k+1-\rho}\|u_{J}\|_{\underline{L}^2(B_{r_{J}})}
\end{align*}
after choosing such $J\in\N$ that $R\in [r_J, r_{J+1})$ as we fix $u_J=u$ to satisfy
\begin{align*}
    u_j=u-\sum_{i=j}^{J-1} \phi_i.
\end{align*}

We also note that if there is no $J$ with the aforementioned properties, then the claim holds by default since there would not be any available scales above $r$ in that case. This allows us to consider all of the scales $r\in[\mathcal{X_H},R]$ for which, by the arguments presented above, there exists $\phi_r\in\mathcal{A}_k$ so that
\begin{align*}
    \|\mathbf{s}^{1/2}\nabla(u-\phi_r)\|_{\underline{L}^2(B_r)}\leq C\left(\frac{r}{R} \right)^{k-\rho}\|\mathbf{s}^{1/2}\nabla u\|_{\underline{L}^2(B_R)}.
\end{align*}
Lastly, the triangle inequality alongside an iteration argument with respect to $t$ yields for every $t\in[\mathcal{X_H},\nf{r}{2}]$ that
\begin{equation*}
\begin{aligned}
    \|\mathbf{s}^{1/2}\nabla(\phi_{2t}-\phi_t)\|_{\underline{L}^2(B_r)}&\leq C\left(\frac{r}{t} \right)^{k-1}\left(\frac{t}{R} \right)^{k-\rho}\|\mathbf{s}^{1/2}\nabla u\|_{\underline{L}^2(B_R)} \\
    &= C\left(\frac{t}{r} \right)^{1-\rho}\left(\frac{r}{R} \right)^{k-\rho}\|\mathbf{s}^{1/2}\nabla u\|_{\underline{L}^2(B_R)}.
\end{aligned}
\end{equation*}
This provides the desired claim, since we can utilize the Haar measure and the triangle inequality for every $r\in[\mathcal{X_H},R]$ as $\phi:=\phi_\mathcal{X_H}$ to compute that
\begin{equation*}
    \begin{aligned}
\|\mathbf{s}^{1/2}\nabla(u-\phi)\|_{\underline{L}^2(B_r)}&\leq C\int_{\mathcal{X_H}}^{\nf{r}{2}} \|\mathbf{s}^{1/2}\nabla(\phi_{2t}-\phi_t)\|_{\underline{L}^2(B_r)} \frac{dt}{t} + \|\mathbf{s}^{1/2}\nabla(u-\phi_r)\|_{\underline{L}^2(B_r)} \\
&\leq C\left(\frac{r}{R} \right)^{k-\rho}\|\mathbf{s}^{1/2}\nabla u\|_{\underline{L}^2(B_R)},
    \end{aligned}
\end{equation*}
because we have the following evident upper bound for the integral over $t$ that
\begin{align*}
    \int_{\mathcal{X_H}}^{\nf{r}{2}}\left(\frac{t}{r} \right)^{1-\rho}\frac{dt}{t}\leq C<\infty.
\end{align*}

\emph{Step 5. Proof of (iii)$_k$.} Finally, we suppose that each of the assertions (i)$_{k+1}$, (ii)$_{k+1}$, and~(iii)$_{k+1}^{'}$ is valid. First, note that by (i)$_{k+1}$ and (ii)$_{k+1}$, we are able to identify the quotient spaces $\mathcal{A}_{k+1}/\mathcal{A}_{k}$ and $\overline{\mathcal{A}}_{k+1}/\overline{\mathcal{A}}_{k}$. This means that we can decompose the corrector $\phi\in\mathcal{A}_{k+1}$ into its lower-order parts. Thus, there exist such $\phi\in\mathcal{A}_k$ and $\widetilde{\phi}\in\mathcal{A}_{k+1}$ that each scale $r\in[\mathcal{X_H},R]$ satisfies the estimates that
\begin{align*}
    \|\mathbf{s}^{1/2}\nabla(u-\phi-\widetilde{\phi}) \|_{\underline{L}^2(B_r)}\leq C\left(\frac{r}{R} \right)^{k+1-\nf{\theta}{2}} \|\mathbf{s}^{1/2}\nabla u \|_{\underline{L}^2(B_R)}.
\end{align*}
as well as
\begin{align*}
    \|\mathbf{s}^{1/2}\nabla\widetilde{\phi} \|_{\underline{L}^2(B_r)}\leq C\left(\frac{r}{R} \right)^{k} \|\mathbf{s}^{1/2}\nabla u \|_{\underline{L}^2(B_R)}.
\end{align*}
Consequently, the desired claim now follows from a simple application of the triangle inequality.

\emph{Step 6. Proof of (\ref{thmhighregdims}).} The justification for the last assertion in (\ref{thmhighregdims}) remains exactly unchanged from the argument presented in \cite{akm}, but for the sake of completeness, we will briefly review it here as well. It is a well-known fact from the classical theory of harmonic functions (see, for example, \cite[Corollary 2.1.4]{armitage}) that the dimension $\dim(\overline{\mathcal{A}}_k)$ is given explicitly for every $k\in\N$ by
\begin{align*}
    \dim(\overline{\mathcal{A}}_k)=\binom{d+k-1}{k}+\binom{d+k-2}{k-1},
\end{align*}
where we interpret for $k=0$ that $\binom{d-2}{-1}=0$. Thus, it remains to argue inductively for $k$ that $\dim(\mathcal{A}_k)=\dim(\overline{\mathcal{A}}_k)$. The initial step for $k=0$ is already clear from the arguments above, since $\mathcal{A}_0=\overline{\mathcal{A}}_0$ is the set of constant functions. Next, as in the previous step, we note that results~(i)$_k$ and~(ii)$_k$ provide us with a canonical isomorphism between the quotient spaces $\mathcal{A}_k/\mathcal{A}_{k-1}$ and~$\overline{\mathcal{A}}_k/\overline{\mathcal{A}}_{k-1}$. This implies that their dimensions coincide, which, in turn, justifies the claim of~(\ref{thmhighregdims}) by the induction loop. This concludes the proof.
    \end{proof}
\end{theorem}

Quite naturally, similarly to the setting of uniformly elliptic equations (cf. \cite[Theorem 6.13]{ak}), there surely exists a local formulation for Theorem \ref{thmhighreg} above. However, proving this fact rigorously would mean that we have to repeat the same argument as above, but now in the local setting with some finite stopping scale $t\geq r\geq\mathcal{X_H}$. Furthermore, we would also require other results from \cite{ak}, but now in the high-contrast context. This local formulation would introduce for open and bounded sets $U\subset\R^d$ the local solution spaces $\mathcal{A}_k(U)$ that have $\ahom$-harmonic boundary values, or in other words,
\begin{align*}
    \mathcal{A}_k(U):=\{u\in\mathcal{A}(U)\; |\; u=p+H_{c}^1(U) \: \textnormal{for some} \: p\in\overline{\mathcal{A}}_k\},
\end{align*}
so that $\mathcal{A}_k$ is the limit of $\mathcal{A}_k(B_R)$ as $R\to\infty$.

Ideally, we would like to have this local version of the aforementioned result, which would provide a concrete tool for potential applications and calculations around this topic. In practice, this locality shows as the finiteness of each summation and integration. Unfortunately enough, these considerations will be left for future research projects.

\appendix

\section{Properties of harmonic polynomials}

In this appendix, we list some of the useful properties that the harmonic polynomials in the proof of Theorem \ref{thmhighreg} have. Especially, we are interested in the behavior of these functions within the adapted geometry in balls $B_r$ or cubes $\diamondsuit_n$ so that we can refer to the results and properties below in our earlier proofs. We will also consider the changes of variables needed to move between the adapted and Euclidean geometry. Our primary reference in this appendix is the book \cite{axler} by Axler, Bourdon, and Ramey.

Let us begin by recalling the definition of a \emph{harmonic polynomial}. A harmonic (real-valued) function $u\in C^2(U)$ in an open non-empty set $U\subset\R^d$ is a solution of the \emph{Laplace equation}
\begin{align}\label{appendixlaplace}
    \Delta u:=\sum_{j=1}^d u_{x_j x_j}=0.
\end{align}
We understand \emph{polynomials} as linear combinations of monomials, namely, a homogeneous polynomial $p$ of order $k\in\N$ has the form
\begin{align*}
    p(x)=\sum_{|q|=k}c_qx^q,
\end{align*}
where $c_q\in\R$ and $x\in\R^d$. The reader should familiarize themselves with the multi-index notation in which we write that $x^q:=x_1^q\ldots x_d^q$, $q!=q_1!\ldots q_d!$, and $|q|=q_1+\ldots+q_d$. Now, if $u$ is harmonic around the origin, we see by defining
\begin{align*}
    p_k(x):=\sum_{|q|=k} \frac{\partial_q u(0)}{q!}x^q
\end{align*}
that for points $x$ sufficiently close to the origin, it holds that
\begin{align}\label{appendixsphericalharmonics}    
    u(x)=\sum_{k=0}^\infty p_k(x).
\end{align}
Note that each $p_k$ here is a homogeneous polynomial of order $k$ and that the harmonicity of $u$ in $U$ implies (trivially) that $u$ is also differentiable in $U$. Since the Laplace operator $\Delta$ is linear, we see that each $p_k$ is also harmonic. Thus, we will call these functions \emph{harmonic polynomials}. Another important remark is that since (\ref{appendixlaplace}) holds, the degree of the harmonic polynomial determines its scaling properties due to the homogeneity property stating that $u(rx)=r^ku(x)$ as $r>0$.

There are many useful properties that harmonic polynomials have, so let us present a few of these results next. First, we should note that we can write every polynomial in $\R^d$ as a properly normalized sum of low-order harmonic polynomials. Furthermore, for a polynomial~$p$ of the $k$th order defined in $U\subset \R^d$, we can uniquely write that
\begin{align}
    p=\sum_{j=0}^k p_j,
\end{align}
where each $p_j$ is a homogeneous polynomial on $U\subset \R^d$ with a degree of $j=0,\ldots ,k$. We may identify these polynomials $p_j$ as the \emph{homogeneous parts} of $p$ for each degree. As hinted earlier, we quickly note that $p$ itself is harmonic if and only if all functions $p_j$ are harmonic.

Perhaps the most important observation (at least for our purposes) is that we can decompose the space $L^2(U)$ as a Hilbert space by \emph{spherical harmonics}. Namely, let us denote the set of all $k$-order harmonic polynomials on $U\subset\R^d$ by $\mathcal{H}_k(U)$. An important special case here is the space $\mathcal{H}_k(S)$ for the unit sphere $S$ of $\R^d$, and we call the restriction $p_{|S}$ of a harmonic polynomial $p\in\mathcal{H}_k(\R^d)$ the $k$th degree spherical harmonics of $p$. Consequently, \cite[Proposition 5.9]{axler} now tells us that $\mathcal{H}_k(S)$ and $\mathcal{H}_l(S)$ are always orthogonal to each other in $L^2(S)$ as $k\neq l$. Furthermore, if $p$ is any $k$-degree polynomial in $\R^d$, then the restriction map $p_{|S}$ can be expressed as a summation of at most $k$-degree spherical harmonics. Based on these observations, we can finally state the important result that
\begin{align*}
    L^2(S)=\bigoplus_{k=0}^\infty \mathcal{H}_k(S),
\end{align*}
or, in other words, the Hilbert space $L^2(S)$ is the direct sum space of the different order harmonic polynomial spaces $\mathcal{H}_k(S)$.

Let us next briefly recall the definition and properties of these infinite direct sums of Hilbert spaces. For a Hilbert space $H$, we may write that
\begin{align*}
    H=\bigoplus_{k=0}^\infty H_k
\end{align*}
when the following requirements are met.
\begin{enumerate}
    \item Each $H_k$ is a closed subspace of $H$.
    \item For all $k\neq l$, the spaces $H_k$ and $H_l$ are orthogonal to each other.
    \item For every element $x\in H$, there exists (unique) $x_k\in H_k$ so that
    \begin{align*}
x=\sum_{k=0}^\infty x_k,
    \end{align*}
    where the sum on the right-hand side always converges within the norm of $H$.
\end{enumerate}
In the case where the aforementioned axioms are satisfied, we say that $H$ is a direct sum of the spaces~$H_k$. Note also that the linear span of the union $\bigcup_k H_k$ is dense in $H$ due to the third condition.

Our next goal here is to study the linear transformations from the adapted geometry introduced in Assumption \ref{d.renormellipt} or in (\ref{defadaptedballs}) to the standard Euclidean geometry and vice versa. In other words, we wish to understand the change of variables needed to change the current geometry to the other. Let us focus on the case where we perform the change of variables from the adapted geometry $\diamondsuit_n$ to the Euclidean geometry $\square_n$ here.

Suppose that $u\in H^1(\diamondsuit_n)$ satisfies the homogenized elliptic equation
\begin{align}\label{appendixellipeq}
    -\nabla\cdot\ahom \nabla u = 0
\end{align}
in $\diamondsuit_n$ for some fixed $n\in\N$. We shall make the following change of variables so that for each $x\in\diamondsuit_n$, we have that $u(x)=v(\mathbf{q}_0^{-1}x)$ for some function $v\in H^1(\square_n)$ and $\mathbf{q}_0$ as in Assumption~\ref{d.renormellipt}. Note that the approximation $\mathbf{q}_0\approx \overline{\lambda}^{-1/2}\overline{\mathbf{s}}^{1/2}$ is most often accurate enough for us.
Taking the gradient of both sides above provides that $\nabla u(x)=\mathbf{q}_0^{-1}\nabla v(\mathbf{q}_0^{-1}x)$, which reduces (by taking the divergence) the equation of (\ref{appendixellipeq}) to a simple Laplace equation $\Delta v=0$ within the standard Euclidean geometry of~$\square_n$. Of course, similar reasoning in the opposite direction creates a pathway for us to transfer objects from the Euclidean geometry to $\diamondsuit_n$. However, we have been utilizing this direction throughout the paper, so we will not present the details here. Indeed, it is a routine exercise to show that these results presented above still hold after this change of variables.

Let us still point out a couple of useful properties and characteristics that harmonic polynomials have. First, we have the useful identity for all harmonic polynomials $u$ in the Euclidean balls $B_s^e$ and~$B_r^e$ that
\begin{align*}
    \|u\|_{\underline{L}^2(B_s^e)}^2=\sum_{j=0}^k \|p_j\|_{\underline{L}^2(B_s^e)}^2=\sum_{j=0}^k \left(\frac{s}{r}\right)^{2j} \|p_j\|_{\underline{L}^2(B_r^e)}^2
\end{align*}
as $0<s\leq r$. Secondly, if it holds that $\square_n\subset B_r^e$ and $B_s^e\subset \square_{n-1}$, then we may reason that
\begin{align*}
    \|u\|_{\underline{L}^2(\square_n)}^2\leq C\|u\|_{\underline{L}^2(B_r^e)}^2\leq C\left(\frac{r}{s} \right)^{2\textnormal{deg}\{u\}}\|u\|_{\underline{L}^2(B_s^e)}^2\leq C\left(\frac{r}{s} \right)^{2\textnormal{deg}\{u\}}\|u\|_{\underline{L}^2(\square_{n-1})}^2.
\end{align*}
Naturally, the same results are valid for adapted balls $B_s$ and $B_r$ as well by applying the aforementioned change of variables by $\mathbf{q}_0$.

\bigskip

\noindent \textbf{Acknowledgments:} The author was supported by the Research Council of Finland and the European Research Council (ERC) under the Horizon 2020 research and innovation program of the European Union (grant agreement No. 818437). The author is extremely grateful to his Ph.D. advisor, Prof. Tuomo Kuusi, for multiple helpful discussions and improvements to the manuscript.

{\footnotesize
\bibliographystyle{alpha}
\bibliography{highcontrast}
}

\end{document}